\newtheorem{theorem}{Theorem}[section]
\newtheorem{lemma}[theorem]{Lemma}
\theoremstyle{definition}
\theoremstyle{remark}
\numberwithin{equation}{section}
\newcommand{\mmod}[1]{\,\,(\text{mod}\,\,#1)}
\def\g{{\mathbf g}}
\def\h{{\mathbf h}}
\def\v{{\mathbf v}}
\def\x{{\mathbf x}}
\def\y{{\mathbf y}}
\def\d{{\,{\rm d}}}
\def\ftil{{\widetilde f}}
\def\L{{\boldsymbol \lambda}}
\def\cA{{\mathcal A}}
\def\cC{{\mathcal C}}
\def\cG{{\mathcal G}}
\def\cI{{\mathcal I}}
\def\cJ{{\mathcal J}}
\def\cM{{\mathcal M}}
\def\cN{{\mathcal N}}
\def\cR{{\mathcal R}}
\def\cV{{\mathcal V}}
\def\cW{{\mathcal W}}
\def\cX{{\mathcal X}}
\def\cY{{\mathcal Y}}
\def\cZ{{\mathcal Z}}
\def\tg{\widetilde{g}}\def\tfh{\widetilde{\mathfrak h}}
\def\tf{\widetilde{f}}\def\tZ{\widetilde{Z}}
\def\R{{\mathbb R}}\def\NN{{\mathbb N}}\def\C{{\mathbb C}}
\def\Z{{\mathbb Z}}\def\Q{{\mathbb Q}}
\def\fJ{{\mathfrak J}}\def\ff{{\mathfrak f}}\def\fg{{\mathfrak g}}\def\fh{{\mathfrak h}}
\def\tff{{\widetilde \ff}}
\def\fm{{\mathfrak m}}\def\fM{{\mathfrak M}}\def\fN{{\mathfrak N}}
\def\fn{{\mathfrak n}}\def\fp{{\mathfrak p}}
\def\fq{{\mathfrak q}}  \def\fs{{\mathfrak s}}
\def\fw{{\mathfrak w}}\def\fW{{\mathfrak W}}\def\fB{{\mathfrak B}}\def\ft{{\mathfrak t}}
\def\hK{{\widehat{K}}}\def\ctZ{{\widetilde{\mathcal
Z}}}\def\fC{{\mathfrak C}}
\def\fv{{\mathfrak v}}\def\fV{{\mathfrak V}}
\def\ep{\varepsilon}
\def\implies{\Rightarrow}
\def\le{\leqslant}
\def\ge{\geqslant}
\begin{document}

\title{Exceptional sets for Diophantine inequalities}

\author[Scott T. Parsell and Trevor D. Wooley]{Scott T. Parsell and Trevor D. Wooley}
\address{STP: Department of Mathematics,
West Chester University, 25 University Ave., West Chester, PA 19383, U.S.A.}
\email{sparsell@wcupa.edu}
\address{TDW: Department of Mathematics, University of Bristol,
University Walk, Clifton, Bristol BS8 1TW, United Kingdom}
\email{matdw@bristol.ac.uk}
\thanks{STP was supported by National Security Agency Grants H98230-08-1-0070 and H98230-11-1-0190 and TDW by a Royal Society Wolfson Research Merit Award.}

\begin{abstract} We apply Freeman's variant of the Davenport-Heilbronn method to investigate the exceptional set of real numbers not close to some value of a given real diagonal form at an integral argument. Under appropriate conditions, we show that the exceptional set in the interval $[-N,N]$ has measure $O(N^{1-\delta})$, for a positive number $\delta$.
\end{abstract}

\subjclass[2010]{11D75} \maketitle

\section{Introduction}\label{sect:1} A variant of the classical circle method introduced by Davenport and Heilbronn \cite{DH:46} permits the investigation of the value distribution of indefinite real diagonal forms at integral points. Let $k\in \NN$ and let $\lambda_1, \dots, \lambda_s$ be non-zero real numbers, not all in rational ratio, and not all of the same sign when $k$ is even. Then the Davenport-Heilbronn method establishes the existence of a natural number $\fs(k)$ having the property that, whenever $s\ge \fs(k)$, then for all $0<\tau\le 1$ and $\mu\in \R$, there exist infinitely many integral solutions $\x$ of the inequality
\begin{equation}\label{1.1}
|\lambda_1x_1^k+\dots +\lambda_sx_s^k-\mu|<\tau.
\end{equation}
When $\mu=0$ and $\fs(k)=2^k+1$, a result of this type is described, for example, in \cite[Theorem 11.1]{V:HL}. For smaller values of $s$, available technology may limit accessible conclusions to analogues involving some sort of averaging over the real number $\mu$. Quantitative estimates in this direction are ultimately connected to the savings achievable on a suitably defined set of minor arcs, and so even the pivotal reorganization of the Davenport-Heilbronn method introduced by Freeman \cite{Freem:alb} apparently limits such bounds to be better than trivial only by the narrowest of margins. In this paper we demonstrate that, in a wide set of circumstances, this barrier may be unequivocally broken. Indeed, we show that the Diophantine inequality (\ref{1.1}) is satisfied for all real numbers $\mu \in [-N,N]$ with the possible exception of a set having measure $O(N^{1-\Delta})$, for a positive number $\Delta$.\par

In order to advance further, we must formalise the above ideas and introduce notation with which to describe our conclusions. Let $\cZ_{s,k}(N,M)=\cZ_{s,k}^\tau (N,M;\L)$ denote the set of real numbers $\mu \in [N,N+M]$ for which the inequality (\ref{1.1}) has no integral solution. On writing
\begin{equation}\label{1.2}
F(\x)=\lambda_1 x_1^k + \dots + \lambda_sx_s^k
\end{equation}
and
$$\fB= \bigcap_{\x \in \Z^s} (-\infty, F(\x)-\tau] \cup [F(\x)+\tau,\infty),$$
one sees that $\cZ_{s,k}(N,M)=\fB \cap[N,N+M]$. This shows that $\cZ_{s,k}(N,M)$ is a closed subset of $\R$ and in particular that it is measurable. We may therefore write
$$Z_{s,k}(N,M)={\rm{meas}}(\cZ_{s,k}(N,M)).$$
Although the solubility of (\ref{1.1}) for all $\mu$ requires an indefiniteness hypothesis, our results on exceptional sets apply equally well to situations in which the form $F(\x)$ is definite. The proofs do require slightly more care in the definite case to ensure compatibility between the size of $\mu$ and the ranges of the variables, but this is easily arranged by summing over dyadic intervals.  We concentrate on the case where $F$ has at least one positive coefficient exceeding $2$ and leave to the reader the necessary sign changes and re-scaling required to formulate the general case.\par

Our results are concisely introduced in some generality by reference to available minor arc estimates for exponential sums. With this end in mind we record some additional notation. When $P$ and $R$ are positive real numbers, write
$$\cA(P,R)=\{n\in [1,P]\cap\Z:\text{$p|n \implies p \le R$}\}.$$
Here and throughout, we employ the letter $p$ to denote a prime number, and we assume that $R$ is a sufficiently small positive power of $P$. In addition, we put
\begin{equation}\label{1.3}
g(\alpha)=\sum_{x \in \cA(P,R)} e(\alpha x^k),
\end{equation}
where as usual $e(z)$ denotes $e^{2\pi iz}$. We next define a standard set of major and minor arcs. When $q\in \NN$ and $a\in \Z$, we take $\fN(q,a)$ to be the set of real numbers $\alpha$ for which $|q\alpha-a| \le (2k)^{-1}P^{1-k}$.  We write $\fN$ for the union of the intervals $\fN(q,a)$ over all co-prime integers $a$ and $q$ with $1 \le q \le (2k)^{-1}P$, and we put $\fn=\R \setminus \fN$. Finally, we say that the pair $(s,\sigma)$ forms a {\it smooth accessible pair for $k$} when $s\ge 2k$, and for some positive number $\omega$, whenever $\ep>0$, then
\begin{equation}\label{1.4}
\int_{\fn \cap [0,1)}|g(\alpha)|^s\d\alpha \ll P^{s-k-\omega}\quad \mbox{and} \quad \sup_{\alpha \in \fn} |g(\alpha)| \ll P^{1-\sigma+\ep}.
\end{equation}

In \S\ref{sect:6} we establish a bound on the measure of the exceptional set $\cZ_{s,k}(N,M)$. Here and elsewhere, we adopt the convention that whenever $\ep$ appears in a statement, then we implicitly assert that the statement holds for all $\ep>0$. Moreover, implicit constants in Vinogradov's notation may depend on $s$, $k$, $\L$ and $\tau$.

\begin{theorem}\label{theorem1.1}
Suppose that $k\ge 3$ and that $(s_0,\sigma_0)$ forms a smooth accessible pair for $k$. Let $s$ and $t$ be non-negative integers with
$$s\ge \max\{2k+3,17,\tfrac{1}{2}s_0\}.$$
Then whenever $M \ge N^{(1-1/k)^t}$, there exists a positive number $\Delta$ such that
\begin{equation}\label{1.5}
Z_{s+t,k}(N,M)\ll MN^{-\Delta-(2s-s_0)\sigma_0/k}.
\end{equation}
\end{theorem}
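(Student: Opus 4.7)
The plan is to carry out the Davenport--Heilbronn--Freeman argument, controlling the exceptional set via a Plancherel-style mean-square estimate on the minor-arc contribution. Choose $P\asymp N^{1/k}$ and write $G(\alpha)=\prod_{i=1}^{s+t}g(\lambda_i\alpha)$. Introduce a non-negative Fej\'er-type kernel $K$ on $\R$ with Fourier transform $\hat K$ supported on $[-\tau,\tau]$, satisfying $\hat K(0)=\tau$, $\hat K\ge 0$, and $K(\alpha)\ll \min(\tau,(\tau\alpha^2)^{-1})$. Define
$$T(\mu)=\int_{\R}G(\alpha)K(\alpha)e(-\alpha\mu)\d\alpha=\sum_{\x\in\cA(P,R)^{s+t}}\hat K(F(\x)-\mu)\ge 0,$$
so that $T(\mu)>0$ precisely when (\ref{1.1}) has an integral solution; hence $\cZ_{s+t,k}(N,M)$ coincides with the zero set of $T$ in $[N,N+M]$.

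Decompose $\R=\fM\cup \fm\cup \ft$ with $\fM=[-\eta P^{-k},\eta P^{-k}]$ a narrow central arc, $\ft=\{|\alpha|>Q\}$ for a suitable power $Q$ of $P$, and $\fm$ the complementary minor arc region. A standard major arc analysis in the style of \cite[Chapter~11]{V:HL}, combined with the hypothesis that some $\lambda_i$ is positive and exceeds $2$, yields $T_{\fM}(\mu)\gg \tau P^{s+t-k}$ for $\mu$ in the range of $F$ on $\cA(P,R)^{s+t}$, which covers almost all of $[N,N+M]$ after a dyadic shell argument. The decay of $K$ gives $|T_{\ft}(\mu)|\ll P^{s+t}/(\tau Q)$, negligible once $Q$ is a sufficiently large power of $P$.

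The essential analytic step is an $L^2$ bound on $T_{\fm}$, which we first establish in the base case $t=0$. Plancherel's identity yields
$$\int_{\R}|T_{\fm}(\mu)|^2\d\mu=\int_{\fm}|G(\alpha)|^2K(\alpha)^2\d\alpha.$$
Using $\prod_{i=1}^s|g(\lambda_i\alpha)|^2\le \sum_{i=1}^s|g(\lambda_i\alpha)|^{2s}$, substituting $\beta=\lambda_i\alpha$, and invoking both the periodicity of $|g|$ and the bound $\sum_{n\in\Z}\sup_{\beta\in[n,n+1]}K(\beta)^2\ll \tau$, we reduce each summand to a multiple of $\tau\int_{\fn\cap[0,1)}|g(\beta)|^{2s}\d\beta$. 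The smooth accessible pair hypothesis then gives
$$\int_{\fn\cap[0,1)}|g|^{2s}\d\beta\le \bigl(\sup_{\fn}|g|\bigr)^{2s-s_0}\int_{\fn}|g|^{s_0}\d\beta\ll P^{2s-k-\omega-(2s-s_0)\sigma_0+\ep}.$$
Chebyshev's inequality, applied against the major-arc lower bound $T_{\fM}(\mu)\gg \tau P^{s-k}$, produces $|\cZ_{s,k}(N,M)|\ll \tau^{-1}P^{k-\omega-(2s-s_0)\sigma_0+\ep}\asymp MN^{-\Delta-(2s-s_0)\sigma_0/k}$ for $M\asymp N$, with $\Delta=\omega/k-\ep$.

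For $t\ge 1$ we proceed by induction on $t$. If $\mu\in \cZ_{s+t,k}(N,M)$, then for every $y\in\cA(P,R)$ the shifted value $\mu-\lambda_{s+t}y^k$ lies in the corresponding $(s+t-1)$-variable exceptional set. Restricting $y$ to a window $[Y,2Y]\cap \cA(P,R)$ with $Y=M^{1/(k-1)}$, the consecutive gaps $\lambda_{s+t}\bigl((y+1)^k-y^k\bigr)\asymp Y^{k-1}\asymp M$ make the $\asymp Y$ translated copies of $\cZ_{s+t,k}(N,M)$ pairwise disjoint and contained inside $\cZ_{s+t-1,k}(N',M')$ for some $N'$ and $M'=YM=M^{k/(k-1)}$. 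The condition $M\ge N^{(1-1/k)^t}$ is equivalent to $M'\ge N^{(1-1/k)^{t-1}}$, so the inductive hypothesis supplies $Y|\cZ_{s+t,k}(N,M)|\ll M'N^{-\Delta-(2s-s_0)\sigma_0/k}$, which rearranges to the desired $|\cZ_{s+t,k}(N,M)|\ll MN^{-\Delta-(2s-s_0)\sigma_0/k}$. The principal technical obstacle is the major-arc step: securing the pointwise lower bound $T_{\fM}(\mu)\gg \tau P^{s+t-k}$ for all but a small subset of $\mu\in[N,N+M]$ calls for a Freeman-style approximation, since the irrationality of the $\lambda_i$ forces one to replace the classical singular series with a real-variable singular integral and to verify its uniform behavior over dyadic shells of $\mu$.
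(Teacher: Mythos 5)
The inductive step in $t$ (translating $\cZ_{s+t,k}(N,M)$ by $\lambda_{s+t}y^k$ over a window $[Y,2Y]$ with $Y=M^{1/(k-1)}$, using disjointness to lose only a factor $Y$ against the $(s+t-1)$-variable estimate) is a genuinely different and rather neat alternative to the paper's treatment, which instead introduces $t$ classical sums over shrinking ranges $P_j<x\le 2P_j$ and absorbs them all at once via the diagonal bound of Lemma \ref{lemma6.1}. Your exponent bookkeeping ($M'=Y^k$, $M'/Y=M$, and $M\ge N^{(1-1/k)^t}\Leftrightarrow M'\ge N^{(1-1/k)^{t-1}}$) is correct, so this reduction to the base case $t=0$ would be a legitimate alternate route.

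However, the base case itself has a serious gap, and it is precisely at the point which the Davenport--Heilbronn--Freeman method exists to address. After applying AM--GM and substituting $\beta=\lambda_i\alpha$, you claim to ``reduce each summand to a multiple of $\tau\int_{\fn\cap[0,1)}|g(\beta)|^{2s}\d\beta$.'' This is false. The set $\fm$ is a Davenport--Heilbronn minor arc, defined by $S(P)P^{-k}<|\alpha|\le T(P)$; the image $\lambda_i\fm$ under $\beta=\lambda_i\alpha$ is essentially a long interval and has no reason to avoid the classical major arcs $\fN$. What the periodicity/substitution argument actually yields is a bound in terms of $\int_{[0,1)}|g(\beta)|^{2s}\d\beta$, which is $\gg P^{2s-k}$ --- the main term, with no saving whatsoever. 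To legitimately restrict to $\fn$ you must first show that for $\alpha\in\fm$, the point $\lambda_i\alpha$ is not near a rational with small denominator, and this cannot be true for all $i$ simultaneously; the Bentkus--G\"otze--Freeman input (Lemma \ref{lemma6.2} in the paper) only guarantees that at least one of $|g(\lambda_1\alpha)|$, $|g(\lambda_2\alpha)|$ is small. The paper's argument therefore splits $\fm\cup\ft$ into $\fp_1\cup\fp_2$, and on $\fp_j$ splits each variable further according to whether $\lambda_i\alpha\in\fN$ or $\lambda_i\alpha\in\fn$; on the ``bad'' set $\fN_i\cap\fp_j$ the saving comes not from a minor-arc mean value but from the pointwise smallness of $g_j$ together with the major-arc pruning estimate of Lemma \ref{lemma6.3} (which in turn rests on \cite[Lemma 5.4]{PW:02} and the variant of Br\"udern's pruning lemma). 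This structure is indispensable; the crude Plancherel-plus-AM--GM reduction does not capture it, and your base case as written does not establish the required power saving.

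A secondary, smaller issue: your major-arc lower bound is asserted ``for $\mu$ in the range of $F$ on $\cA(P,R)^{s+t}$,'' but what is required (and what \cite[Lemma 9.4]{W:FAF} delivers after dyadic localisation) is the uniform lower bound $T_\fM(\mu)\gg\tau P^{s-k}$ for all $\mu$ in the interval under consideration. Without uniformity, Chebyshev does not directly bound $\operatorname{meas}(\cZ)$. You flag this at the end, but in its present form the proposal leaves both the minor-arc and major-arc parts of the $t=0$ case unproved, and only the first of these is a matter of citing standard technology.
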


Subject to the hypotheses of the statement of this theorem, the methods discussed in \cite{W:FAF} permit the proof of the bound $Z_{s,k}(N,M)\ll 1$ when $s\ge s_0$. Indeed, when the form $\lambda_1x_1^k+\ldots +\lambda_sx_s^k$ is indefinite the latter bound may be replaced by the definitive statement that $Z_{s,k}(N,M)=0$. In \S\S\ref{sect:6} and \ref{sect:7} we apply the work of \cite{BW:00-27, VW:95, VW:00, W:92war, W:95sws} to provide a refined explicit version of Theorem \ref{theorem1.1}.

\begin{theorem}\label{theorem1.2}
Suppose that $4\le k\le 20$, and that $s_0=s_0(k)$, $u_0=u_0(k)$ and $\sigma=\sigma(k)$ are as given in Table $1$. Then whenever $s$ and $t$ are non-negative integers with $s\ge \frac{1}{2}(s_0+u_0)$, and $M \ge N^{(1-1/k)^t}$, there exists a positive number $\Delta$ such that
\begin{equation*}
Z_{s+t,k}(N,M) \ll MN^{-\Delta-(2s-s_0)\sigma/k}.
\end{equation*}
Moreover, when $s\ge \frac{1}{2}s_0$, and $M \ge N^{(1-1/k)^t}$, then for some $\Delta>0$ one has
\begin{equation*}
Z_{s+t,k}(N,M) \ll MN^{-\Delta}.
\end{equation*}
The same conclusions hold for larger values of $k$ on setting $u_0(k)=0$,
$$s_0(k) = k(\log k + \log \log k + 2+o(1))\quad \text{and}\quad \sigma(k)^{-1}=k(\log k+O(\log \log k)).$$
\end{theorem}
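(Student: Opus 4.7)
The route to Theorem \ref{theorem1.2} is to apply Theorem \ref{theorem1.1} with the specific smooth accessible pairs $(s_0,\sigma)$ supplied by the state-of-the-art technology for smooth Weyl sums. For each $k$ with $4 \le k \le 20$, I would read off $s_0(k)$, $u_0(k)$ and $\sigma(k)$ from Table $1$, these arising respectively from an efficient differencing mean value inequality
\begin{equation*}
\int_0^1 |g(\alpha)|^{s_0-u_0}\,\d\alpha \ll P^{s_0-u_0-k+\ep}
\end{equation*}
drawn from \cite{VW:95, VW:00, W:95sws}, and a pointwise minor arc estimate $\sup_{\alpha \in \fn} |g(\alpha)| \ll P^{1-\sigma+\ep}$ drawn from \cite{BW:00-27, W:92war}. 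A single H\"older-type splitting then yields
\begin{equation*}
\int_{\fn\cap [0,1)} |g(\alpha)|^{s_0}\,\d\alpha \le \Bigl(\sup_{\alpha\in\fn} |g(\alpha)|\Bigr)^{u_0} \int_0^1 |g(\alpha)|^{s_0-u_0}\,\d\alpha \ll P^{s_0 - k - u_0\sigma + \ep},
\end{equation*}
confirming that $(s_0, \sigma)$ is a smooth accessible pair for $k$ with $\omega = u_0\sigma - \ep$.

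With this pair installed, the first conclusion of Theorem \ref{theorem1.2} follows directly from Theorem \ref{theorem1.1}, the hypothesis $s \ge \tfrac{1}{2}(s_0+u_0)$ guaranteeing that $2s \ge s_0+u_0$, a numerical condition that I would verify case by case from Table $1$ to be at least as strong as the structural requirement $s \ge \max\{2k+3, 17, s_0/2\}$. The second, qualitative conclusion requires only the weaker hypothesis $s \ge \tfrac{1}{2}s_0$; here one applies Theorem \ref{theorem1.1} with a slightly cruder choice of auxiliary parameter, recording merely that some positive $\Delta$ survives, without tracking the explicit $\sigma$-dependent saving.

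For $k > 20$ the relevant smooth accessible pair comes from the asymptotic theory: the smooth mean value machinery of \cite{W:95sws} furnishes $s_0 = k(\log k + \log\log k + 2 + o(1))$ for which $\int_0^1 |g(\alpha)|^{s_0}\,\d\alpha \ll P^{s_0-k+\ep}$, and the associated minor arc Weyl estimate yields $\sigma^{-1} = k(\log k + O(\log\log k))$. In this range no auxiliary Weyl differencing is required, so one may take $u_0(k) = 0$, and the two statements of the theorem coalesce into the single consequence of Theorem \ref{theorem1.1}.

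The main obstacle is book-keeping rather than ideology: for each $k$ in the tabulated range one must verify that the triple $(s_0(k), u_0(k), \sigma(k))$ satisfies all the structural side conditions of Theorem \ref{theorem1.1}, in particular the floor constraints $s_0/2 \ge 2k+3$ and $s_0/2 \ge 17$, and that the H\"older-type step indeed delivers the claimed value of $\omega$ consistently with the smoothness parameter $R$ used in the underlying mean value. These checks are numerically delicate but entirely mechanical once the relevant literature is assembled.
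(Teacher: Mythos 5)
The proposal works for $k\ge 7$, where it essentially reproduces the paper's route: verify that $(s_0(k),\sigma(k))$ is a smooth accessible pair by a H\"older step against the supremum bound (the paper uses the exponent $s_0-2t$ with $t$ drawn from the tables of permissible exponents, rather than $u_0$, which is set to $0$ in this range), and then appeal to Theorem \ref{theorem1.1}. However, there is a genuine gap in the range $4\le k\le 6$, and it is precisely for this reason that the paper mounts a separate argument in \S\ref{sect:7}.

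The first obstruction is structural. Theorem \ref{theorem1.1} carries the side condition $s\ge\max\{2k+3,17,\tfrac12 s_0\}$, which evaluates to $17$ for $k=4,5,6$. But Theorem \ref{theorem1.2} claims the bound for $s\ge\tfrac12(s_0+u_0)$, which from Table $1$ is $8$, $12$ and $16$ respectively. Your assertion that the hypothesis $s\ge\tfrac12(s_0+u_0)$ can be checked ``case by case from Table $1$ to be at least as strong as the structural requirement'' is simply false in this range; the hypothesis of Theorem \ref{theorem1.2} is strictly weaker than that of Theorem \ref{theorem1.1}, so the latter cannot deliver the former by direct application. The bottleneck is the major-arc moment estimate of Lemma \ref{lemma6.3}, which for pure smooth sums $g_i$ demands an $s$-th moment with $s>\max\{2k+2,16\}$. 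The paper circumvents this by replacing $s-u$ of the smooth sums $g_i$ by classical Weyl sums $f_m$ (with the parameters $u$, $v$, $w$ from Table $3$), because the classical major-arc Lemma \ref{lemma3.2} only needs $t>k+1$; and it exploits the fact that Weyl's inequality gives $\sigma=2^{1-k}$, which for $k\le 6$ beats the smooth-sum $\sigma$ appearing in $(\ref{1.4})$. This mixed argument is what buys the reduction from $s\ge 17$ to $s\ge\tfrac12(s_0+u_0)$.

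The second obstruction concerns your H\"older step itself. You assume a mean value $\int_0^1|g(\alpha)|^{s_0-u_0}\,\d\alpha\ll P^{s_0-u_0-k+\ep}$. For $k=4$ this reads $\int_0^1|g|^8\ll P^{4+\ep}$, which is not in the literature and is not what \cite{VW:95,VW:00} supply; the best-possible smooth mean values that the paper actually invokes occur at exponents $v(k)=12,20,26$ in $(\ref{7.1})$, and the auxiliary mean values at $2u(k)=10,16,24$ in $(\ref{7.2})$ still carry a positive slack $\theta_{u,k}$. So the smooth accessible pair you propose to feed into Theorem \ref{theorem1.1} is not actually established for $4\le k\le 6$. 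Finally, you do not address the second, qualitative estimate $Z_{s+t,k}(N,M)\ll MN^{-\Delta}$ for $s\ge\tfrac12 s_0$: for $k\ge 7$ this coincides with the first estimate since $u_0=0$, but for $4\le k\le 6$ it is a genuine strengthening (down to $s\ge 6,9,13$ respectively), and Theorem \ref{theorem1.1} is even further out of reach. The paper handles this by replacing $\cA(P,R)$ with the set $\cC(P,R)$ and narrowing the major arcs to $q\le\sqrt{R}$, which allows a major-arc analysis of $g(\alpha)$ as sharp as for $f(\alpha)$ and hence weakens the moment constraint in Lemma \ref{lemma6.3} to $t>k+1$, at the cost of degrading $\sigma_0$ to an unspecified positive constant.
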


$$\boxed{\begin{matrix} k&4&5&6&7&8&9&10&11&12\\
s_0(k)&12&18&25&33&42&50&59&67&76\\
u_0(k)&4&6&7&0&0&0&0&0&0\\
\sigma(k)^{-1}&8&16&32&58&70&83&95&108&120\end{matrix}}$$
$$\boxed{\begin{matrix}k&13&14&15&16&17&18&19&20\\
s_0(k)&84&92&100&109&117&125&134&142\\
u_0(k)&0&0&0&0&0&0&0&0\\
\sigma(k)^{-1}&133&146&158&171&184&197&210&223\end{matrix}}$$
\vskip.2cm
\begin{center}\text{Table 1: Parameters for Theorem \ref{theorem1.2}}\end{center}
\vskip.1cm
\noindent Write $\cZ_{s,k}^\tau (N;\L)$ for the set of real numbers $\mu \in [-N,N]$ for which the Diophantine inequality (\ref{1.1}) has no solution, and let $Z_{s,k}(N)$ denote its measure. Then by applying Theorem \ref{theorem1.1} with $t=0$ and summing over dyadic intervals, one finds that there is a positive number $\Delta$ such that, whenever
$$s\ge \max \{2k+3,17,\tfrac{1}{2}s_0(k)\},$$
then $Z_{s,k}(N)=O(N^{1-\Delta})$. This confirms the estimate advertised in our opening paragraph. Furthermore, if $t \sim \beta k \log k$, where $0 < \beta \le \frac{1}{2}$, then one finds that Theorem \ref{theorem1.1} applies with intervals of length $M \asymp N^{\gamma}$, where $\gamma=k^{-\beta+o(1)}$. Hence if $s_0(k) \ge k\log k$, then there are integers $s < s_0(k)$ for which the estimate (\ref{1.5}) holds with $M$ of approximate order $N^{1/\sqrt{k}}$.\par

In the special case $k=3$, additional control may be exercised over exponential sum estimates, and this permits several refinements over the conclusion of Theorem \ref{theorem1.1}. We illustrate such ideas in \S\ref{sect:8} by establishing the following theorem.

\begin{theorem}\label{theorem1.3}
Let $\xi$ be a real number satisfying $\xi^{-1}>2556+48\sqrt{2833}$. Then
$$Z_{4,3}(N)=o(N),\quad Z_{5,3}(N)\ll N^{3/4-\xi},\quad \text{and}\quad Z_{6,3}(N)\ll N^{1/2-2\xi}.$$
\end{theorem}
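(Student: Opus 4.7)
The approach is to specialise the variance argument underpinning Theorems \ref{theorem1.1} and \ref{theorem1.2} to the case $k=3$, where three additional ingredients become available: the sharper pointwise minor-arc estimate $\sup_{\fm}|g(\alpha)|\ll P^{3/4-\xi+\ep}$ for smooth cubic Weyl sums (with $\xi$ quantified as in the statement), the cubic eighth moment $\int_0^1|g(\alpha)|^8\,d\alpha\ll P^{5+\ep}$ of Vaughan, and the diminishing ranges technology from \cite{VW:95,VW:00,W:95sws}. These replace the generic bounds underlying the smooth accessible pair formalism of the introduction and permit a substantially refined exceptional set analysis.

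I would proceed as follows. Set $P\asymp N^{1/3}$ and introduce a Fejer-type kernel $K_\tau$ with $\widehat{K_\tau}$ supported in $(-\tau,\tau)$ and $K_\tau\ge 0$. Define
$$N_s^*(\mu)=\int_{\R}\prod_{i=1}^s g(\lambda_i\alpha)\,K_\tau(\alpha)\,e(-\mu\alpha)\,d\alpha,$$
a non-negative weighted count of integral solutions of (\ref{1.1}) with $x_i\in\cA(P,R)$. A standard singular-series and singular-integral computation on the major arcs, in the style of Freeman \cite{Freem:alb}, shows that for all $\mu\in[-N,N]$ outside an $o(N)$ exceptional set one has $N_s^*(\mu)\gg\tau P^{s-3}$. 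It remains to bound the measure of the set of $\mu$ where the minor-arc contribution exceeds half of this main term, and by Chebyshev and Parseval this measure is bounded by a constant multiple of
$$(\tau P^{s-3})^{-2}\int_{\fm}|g(\alpha)|^{2s}|K_\tau(\alpha)|^{2}\,d\alpha.$$

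For $s=6$, the key step is to split $|g|^{12}=|g|^{4}\cdot|g|^{8}$ on $\fm$, apply the Weyl exponent bound to the first factor to extract a saving of $P^{-4\xi}$, and use Vaughan's eighth moment for the second; together with $\int|K_\tau|^{2}\,d\alpha\ll\tau$ and the main term size $\tau P^3$, this yields the claimed bound $Z_{6,3}(N)\ll N^{1/2-2\xi}$ after optimisation of the arc decomposition parameters. The case $s=5$ uses an intermediate split of the form $|g|^{10}=|g|^{2}\cdot|g|^{8}$, where the narrower loss from the two Weyl factors is offset by the smaller main term $\tau P^{2}$, delivering $N^{3/4-\xi}$. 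When $s=4$ no power saving beyond $N$ is available from a direct supremum split; instead one combines Freeman's dyadic dissection of the intermediate arcs with the cubic mean value to show that the variance about the main term is $o(N\tau^{2}P^{2})$, and Chebyshev then gives $Z_{4,3}(N)=o(N)$.

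The principal technical obstacle is the precise numerical calibration leading to the stated condition $\xi^{-1}>2556+48\sqrt{2833}$. This constraint will emerge from an explicit quadratic inequality in $\xi$ that balances the Weyl exponent savings $P^{-4\xi}$ (for $s=6$) against the residual loss coming from the mean-value factor, Freeman's kernel parameters, and the width of the intermediate arcs; the same quadratic governs the $s=5$ and $s=4$ bounds after appropriate rescaling. I expect the most delicate step to be verifying that, at the threshold indicated in the statement, this quadratic inequality is simultaneously satisfied for all three values of $s$, so that a single choice of $\xi$ suffices for the unified conclusion.
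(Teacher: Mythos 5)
There is a genuine gap, and it starts with your first claimed ingredient. You assert a power-saving pointwise minor-arc estimate $\sup_{\fm}|g(\alpha)|\ll P^{3/4-\xi+\ep}$ for smooth cubic Weyl sums, with $\xi$ identified with the exponent in the theorem. No such bound is known for $k=3$; the paper's argument uses instead the enhanced Weyl inequality (\ref{8.5}) for the \emph{classical} Weyl sum $f$, which gives only $P^{3/4}(\log P)^{1/4+\ep}$, with no power saving. The power saving that ultimately produces the exponent $\xi$ comes from a \emph{mean value}, not a pointwise bound: specifically the sixth moment $\int_0^1|g(\alpha)|^6\,\d\alpha\ll P^{13/4-\theta+\ep}$ from \cite[Theorem 1.2]{W:3cubes}, with $\theta^{-1}=852+16\sqrt{2833}$, from which one gets $Z_{5,3}(N)\ll N^{3/4-\theta/3+\ep}$ and $Z_{6,3}(N)\ll N^{1/2-2\theta/3+\ep}$. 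The condition $\xi^{-1}>2556+48\sqrt{2833}=3\theta^{-1}$, i.e.\ $\xi<\theta/3$, is then inherited wholesale, not derived from the quadratic calibration you describe in your final paragraph. Since your decompositions $|g|^{12}=|g|^4\cdot|g|^8$ and $|g|^{10}=|g|^2\cdot|g|^8$ both rely on the non-existent pointwise saving, they cannot produce the claimed exponents.

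Beyond the numerology, the overall architecture is different from the paper's and I do not see how yours closes. Your Chebyshev/Parseval variance bound $Z\ll(\tau P^{s-3})^{-2}\int_{\fm}|g|^{2s}|K_\tau|^2$ loses a full factor of $Z$ compared with the slim exceptional set method actually used in \S\ref{sect:8}: there one integrates the minor-arc contribution against $e(-\alpha\mu)$ over the exceptional set directly, obtaining an exponential integral $H(\alpha)=H_\cZ(\alpha)$, and then applies H\"older with $|H|^2$ retained inside the key mean values (Lemma \ref{lemma8.1} for $\int|g^4 H^2|K_1$, and Lemma \ref{lemma2.1} for $\int|H|^2K_1$). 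This keeps $Z$ on both sides at exponent $\tfrac{1}{2}$ (or $\tfrac{3}{4}$) after Schwarz, which is what makes the argument close under the very weak available savings for $k=3$. You should also keep the mixture of classical and smooth Weyl sums: the paper works with $\tg=f_1f_2 g_3g_4g_5$ (respectively $f_1f_2f_3 g_4g_5g_6$), where the classical factors $f_i$ are indispensable both for the two-coefficient Freeman dissection via Lemma \ref{lemma3.1} and for the enhanced Weyl bound, while the smooth factors $g_m$ carry the sixth and eighth moment estimates and the slim fourth-moment Lemma \ref{lemma8.1}. Your sketch for $s=4$ is broadly compatible with the paper (Theorem \ref{theorem1.5}), but the $s=5,6$ cases as proposed would require replacing your pointwise-bound premise with the sixth-moment input, restructuring the Hölder split around both $f_i$ and $g_m$ factors, and replacing Chebyshev/Parseval with the $H(\alpha)$-based slim argument.
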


Analogous conclusions for Waring's problem are sharper, most notably for sums of four cubes (see \cite{Bru:91, KW:10rel, W:95bcc, W:3cubes}). Experts will find the explanation in the absence of a $p$-adic iteration restricted to minor arcs in the context of Diophantine inequalities.\par

We now consider asymptotic formulae subject to the restriction that $0<\tau\le 1$. Denote by $\cN_{s,k}^\tau(P;\L,\mu)$ the number of integral solutions of the inequality (\ref{1.1}) with $\x\in [1,P]^s$, and note our earlier assumption that no coefficient $\lambda_i$ is zero. We put $\nu_i=|\lambda_i|$ and $\sigma_i=\lambda_i/\nu_i$, and then define $\fV(\theta) $ to be the set of $\v\in [0,\nu_1]\times \dots \times [0,\nu_{s-1}]$ satisfying the condition
$$\sigma_s(\theta -\sigma_1v_1-\dots -\sigma_{s-1}v_{s-1})\in [0,\nu_s].$$
Finally, we write
$$\Omega_{s,k}(\L,\theta)=k^{-s}|\lambda_1\dots \lambda_s|^{-1/k}C_{s,k}(\L,\theta),$$
where
$$C_{s,k}(\L,\theta)=\int_{\fV(\theta)}(v_1\dots v_{s-1})^{1/k-1}(\sigma_s(\theta -\sigma_1v_1-\dots
 -\sigma_{s-1}v_{s-1}))^{1/k-1}\d\v .$$
When $k\ge 3$ and $s\ge k+1$, it follows from a heuristic application of the Davenport-Heilbronn method that there is a function $L(P)$ tending to infinity such that
\begin{equation}\label{1.6}
\cN_{s,k}^\tau (P;\L,\mu) = 2\tau\Omega_{s,k}(\L,\mu P^{-k})P^{s-k} +O(P^{s-k}L(P)^{-1}).
\end{equation}
We note that $1\ll \Omega_{s,k}(\L,\theta)\ll 1$ provided only that $\text{meas}(\fV(\theta))\gg 1$, and in such circumstances the relation (\ref{1.6}) constitutes an honest asymptotic formula.\par

Next, let $z$ be a positive parameter, and consider a positive function $\psi(z)$ growing sufficiently slowly in terms of $z$. We denote by $\ctZ_{s,k}^\tau(N;\psi;\L)$ the set of real numbers $\mu \in (N/2,N]$ for which one has
\begin{equation}\label{1.7}
|\cN_{s,k}^{\tau}(N^{1/k};\L,\mu) -2\tau\Omega_{s,k}(\L,\mu/N)N^{s/k-1}|>N^{s/k-1}\psi(N)^{-1}.\end{equation}
Recall the notation introduced in (\ref{1.2}) and write $\chi(\mu;a,b)$ for the indicator function of the interval $(a,b)$, so that
$$\chi(\mu;a,b)=\begin{cases} 1,&\text{when $a<\mu<b$,}\\
0,&\text{when $\mu\le a$ or $\mu\ge b$.}\end{cases}$$
Then we see that the counting function $\cN_{s,k}^\tau(\mu)=\cN_{s,k}^\tau(N^{1/k};\L,\mu)$ can be defined by means of the relation
$$\cN_{s,k}^\tau(\mu) = \sum_{1\le x_1,\ldots ,x_s\le N^{1/k}}\chi(\mu;F(\x)-\tau,F(\x)+\tau).$$
Our earlier assumption that one at least of the coefficients $\lambda_i$ is positive and exceeds $2$ implies that  $\text{meas}(\fV(\mu/N))>0$, and hence $\Omega=\Omega_{s,k}(\L,\mu/N)>0$. We therefore find that $\cN_{s,k}^\tau (\mu)-2\tau \Omega N^{s/k-1}$ is a measurable function of $\mu$, and hence the set $\ctZ_{s,k}^\tau (N;\psi;\L)$ is measurable. We write
$$\tZ_{s,k}(N) = {\rm{meas}}(\ctZ_{s,k}^\tau (N;\psi;\L)).$$

\par As in our earlier discussion of the counting function $Z_{s,k}(N)$, we introduce some notation with which to discuss minor arc estimates for classical Weyl sums. Write
$$f(\alpha)=\sum_{1 \le x \le P}e(\alpha x^k).$$
We say that the triple $(s,\sigma,U)$ forms an {\it accessible triple for $k$} when $s\ge 2k$, the function $U(P)$ increases monotonically to infinity, and one has
\begin{equation}\label{1.8}
\int_{\fn \cap [0,1)}|f(\alpha)|^s\d\alpha \ll P^{s-k}U(P)^{-1}\quad \text{and}\quad \sup_{\alpha \in \fn} |f(\alpha)| \ll P^{1-\sigma+\ep}.
\end{equation}

In \S\ref{sect:3} we investigate the measure of the exceptional set $\tZ_{s,k}(N)$.

\begin{theorem}\label{theorem1.4}
Suppose that $k \ge 3$ and that $(s_1,\sigma_1,U)$ forms an accessible triple for $k$. Then whenever
$$s\ge \max\{k+2,\tfrac{1}{2}s_1\},\quad \sigma<\sigma_1,$$
and $\psi(N)$ grows sufficiently slowly in terms of $s$, $\sigma$, $k$, $\L$, and $\tau$, one has
$$\tZ_{s,k}(N)\ll N^{1-(2s-s_1)\sigma/k}U(N^{1/k})^{\ep-1}.$$
\end{theorem}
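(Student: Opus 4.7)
The argument follows Freeman's variant of the Davenport-Heilbronn method together with a mean-square (Parseval) bound for the exceptional set. Write $P=N^{1/k}$, set $F(\alpha)=\prod_{i=1}^s f(\lambda_i\alpha)$, and introduce a Davenport-Heilbronn kernel $K_\tau(\alpha)$ whose inverse Fourier transform approximates $\chi_{[-\tau,\tau]}(\mu)$ and which decays as $\min(\tau,|\alpha|^{-1})$. Then
$$\cN_{s,k}^\tau(P;\L,\mu)=\int_{-\infty}^{\infty}F(\alpha)K_\tau(\alpha)e(-\mu\alpha)\d\alpha,$$
and we dissect $\R$ into a neighbourhood of the origin $\fM=\{|\alpha|\le T\}$ (with $T$ growing slowly in $P$), a trivial region $\ft$ of sufficiently large $|\alpha|$ where the $|K_\tau|^2$ decay combined with the trivial bound $|F|\le P^s$ renders the contribution negligible, and the minor arc $\fm$ covering the intermediate range.

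On $\fM$, the standard approximation $f(\beta)=\int_0^P e(\beta t^k)\d t+O(1)$ followed by the change of variable $v_i=\lambda_i Pt_i^k$ evaluates the major arc integral, uniformly for $\mu\in (N/2,N]$, as $2\tau\Omega_{s,k}(\L,\mu/N)N^{s/k-1}$ plus an error of size $o(N^{s/k-1}\psi(N)^{-1})$; the hypothesis $s\ge k+2$ ensures convergence of the singular integral defining $\Omega_{s,k}$. Write $E(\mu)=\cN_{s,k}^\tau(\mu)-2\tau\Omega_{s,k}(\L,\mu/N)N^{s/k-1}$ and $\cN_m(\mu)=\int_\fm F(\alpha)K_\tau(\alpha)e(-\mu\alpha)\d\alpha$, so $|E(\mu)|\ll |\cN_m(\mu)|+o(N^{s/k-1}\psi(N)^{-1})$ on $(N/2,N]$, placing $\ctZ_{s,k}^\tau(N;\psi;\L)$ (up to a null set) inside $\{\mu\in (N/2,N]:|\cN_m(\mu)|\gg N^{s/k-1}\psi(N)^{-1}\}$. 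Chebyshev's inequality and Parseval's identity then give
$$\tZ_{s,k}(N)\bigl(N^{s/k-1}\psi(N)^{-1}\bigr)^2\ll\int_\R |\cN_m(\mu)|^2\d\mu=\int_\fm |F(\alpha)|^2|K_\tau(\alpha)|^2\d\alpha.$$

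To estimate the right-hand side we invoke a Freeman-style decomposition $\fm=\bigcup_{i=1}^s \fm_i$, where the Diophantine properties of $\L$ force $\lambda_i\alpha\pmod 1$ to lie in the classical Weyl minor arcs $\fn$ at scale $P$ whenever $\alpha\in \fm_i$. By AM-GM, $|F(\alpha)|^2\le s^{-1}\sum_i|f(\lambda_i\alpha)|^{2s}$; on $\fm_i$ we split $|f(\lambda_i\alpha)|^{2s}\le |f(\lambda_i\alpha)|^{s_1}P^{(2s-s_1)(1-\sigma)}$, absorbing the Weyl $\ep$ into the slack $\sigma<\sigma_1$ (the hypothesis $s\ge s_1/2$ is essential here to keep the exponent $2s-s_1$ non-negative). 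A dyadic decomposition of $|\alpha|$ against $|K_\tau(\alpha)|^2\ll\min(\tau^2,\alpha^{-2})$, combined with the change of variable $\beta=\lambda_i\alpha$ and the accessible-triple bound $\int_\fn |f(\beta)|^{s_1}\d\beta\ll P^{s_1-k}U(P)^{-1}$, then yields
$$\int_\fm |F(\alpha)|^2|K_\tau(\alpha)|^2\d\alpha\ll P^{2s-k-(2s-s_1)\sigma}U(P)^{\ep-1}.$$
Since $P=N^{1/k}$, substitution delivers $\tZ_{s,k}(N)\ll \psi(N)^2 N^{1-(2s-s_1)\sigma/k}U(N^{1/k})^{\ep-1}$, and the stipulation that $\psi$ grow slowly enough (e.g.\ $\psi(N)^2\le U(N^{1/k})^{\ep/2}$) absorbs the $\psi^2$ factor into $U^\ep$, yielding the claimed bound.

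The principal obstacle is the Freeman-style decomposition of $\fm$: one must verify that every $\alpha$ in the range $T<|\alpha|\le X$ (with $X$ as in the trivial-arc cutoff) admits at least one index $i$ with $\lambda_i\alpha\pmod 1$ avoiding the classical major arcs $\fN$, which requires delicate use of the irrationality of the ratios $\lambda_i/\lambda_j$ across dyadic ranges of $|\alpha|$. A secondary technical point is arranging the major arc approximation to be uniform in $\mu$ at precision $N^{s/k-1}\psi(N)^{-1}$, which constrains both $T$ and the growth rate of $\psi$. A final book-keeping issue is matching the decay of $|K_\tau|^2$ against the $L^{s_1}$ integration over fundamental domains of $\lambda_i\alpha$ of length $|\lambda_i|^{-1}$ across the dyadic $\alpha$-ranges, producing logarithmic factors absorbed by the $U(P)^\ep$ slack.
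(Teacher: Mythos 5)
Your approach has a serious gap at the Chebyshev/Parseval step. You bound
$$\tZ_{s,k}(N)\bigl(N^{s/k-1}\psi(N)^{-1}\bigr)^2\ll\int_\fm |F(\alpha)|^2|K_\tau(\alpha)|^2\d\alpha,$$
but the right-hand side is too large to yield the claimed power saving. For each index $i$, split $\fm$ into $\fN_i$ (where $\lambda_i\alpha\in\fN\pmod 1$) and its complement $\fn_i$. The contribution of $\fN_i$ to $\int_\fm |f(\lambda_i\alpha)|^{2s}|K_\tau(\alpha)|^2\d\alpha$ is $\asymp P^{2s-k}$ with at best a slowly decaying Freeman-type factor $T(P)^{-c}$, not a power of $P$; and this saving is available only for one index $i\in\{1,2\}$, and only because Freeman's Lemma \ref{lemma3.1} makes one of $|f_1|,|f_2|$ small on $\fm$ — a genuinely weaker statement than $\lambda_i\alpha\in\fn$. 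In particular the decomposition $\fm=\bigcup_i\fm_i$ that you invoke, with $\lambda_i\alpha\in\fn$ on $\fm_i$, is not what Freeman's argument supplies. Moreover, once you replace it with the correct dichotomy $\fN_i\cup\fn_i$, the AM-GM inequality $|F|^2\le s^{-1}\sum_i|f_i|^{2s}$ forces you to control $\int_\fm|f_i|^{2s}|K_\tau|^2$ for \emph{every} $i$, including those $i\ge 3$ for which Freeman provides no saving at all on $\fN_i$; the resulting bound is $\ll P^{2s-k}T(P)^{-c'}$, and Parseval returns only $\tZ_{s,k}(N)\ll\psi(N)^2N\,T(P)^{-c''}=o(N)$, far short of $N^{1-(2s-s_1)\sigma/k}U(N^{1/k})^{\ep-1}$.

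The paper sidesteps this by not squaring. It introduces the exponential integral $H(\alpha)=\int_{\cZ^*}\eta_\mu e(-\alpha\mu)\d\mu$ over the exceptional set, with unimodular weights $\eta_\mu$, and estimates $\int_{\fm\cup\ft}\bigl|\prod_i f_i(\alpha)\,H(\alpha)K_*(\alpha)\bigr|\d\alpha$ directly via H\"older (see \eqref{3.8}--\eqref{3.9}). The crucial point is that on the arcs $\fN_i\cap\fp_j$ one uses $|H(\alpha)|\le H(0)=Z$ together with Lemma \ref{lemma3.2}, so the major-arc-type contribution carries a factor of $Z$ that matches the $Z$ on the left-hand side $P^{s-k}\psi^{-1}Z$ and is absorbed when the inequality is disentangled; the power saving then comes solely from the minor arcs $\fn_i$, where Schwarz pairs Lemma \ref{lemma2.1} (giving $\int|H|^2K_1\le 2Z$) with the accessible-triple bound via Lemma \ref{lemma3.3}, producing a term proportional to $Z^{1/2}$. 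It is exactly this asymmetry — a $Z$ on the major-arc piece versus a $Z^{1/2}$ on the minor-arc piece — that lets the power saving survive, and Parseval cannot reproduce it. Your treatment of the major arcs, the trivial arcs, the use of Lemma \ref{lemma2.3} to translate unit-interval mean values to the whole line, and the matching of $\sigma<\sigma_1$ to absorb $\ep$ are all consistent with the paper; it is the central Br\"udern--Kawada--Wooley-style device of integrating against $H(\alpha)$ that is missing and cannot be supplied by the $L^2$--Chebyshev argument.
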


Although Theorem \ref{theorem1.4} does not address estimates for $\tZ_{4,3}(N)$, a fairly pedestrian approach yields the estimate contained in the following theorem.

\begin{theorem}\label{theorem1.5}
Whenever $\psi(N)$ grows sufficiently slowly in terms of $\L$ and $\tau$, one has $\tZ_{4,3}(N)=o(N)$.
\end{theorem}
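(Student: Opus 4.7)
The plan is a Davenport--Heilbronn argument carried out in the $L^2$ norm over $\mu$, in the spirit of the proof of Theorem~\ref{theorem1.4} but specialised to the critical range $s=4$, $k=3$, where standard mean-value and Weyl bounds offer no polynomial slack. Set $P=N^{1/3}$ and fix a smooth $L^2$-kernel $K(\alpha)$ whose Fourier transform closely approximates the characteristic function of $(-\tau,\tau)$, so that
$$\cN_{4,3}^\tau(\mu)=\int_\R \prod_{i=1}^4 f(\lambda_i\alpha)\,K(\alpha)\,e(-\mu\alpha)\d\alpha.$$
Decompose $\R$ into the standard major arcs $\fM$, minor arcs $\fn$, and a tail region $\{|\alpha|\ge T\}$, for a parameter $T$ to be chosen. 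A routine major-arc analysis supplies the expected main term $2\tau\Omega_{4,3}(\L,\mu/N)N^{1/3}$, while the $L^2$-decay of $K$ absorbs the tail.

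Let $E(\mu)$ denote the residual contribution from $\fn\cap[-T,T]$. By Plancherel's identity,
$$\int_\R |E(\mu)|^2\d\mu \ll \int_{\fn\cap[-T,T]}\Bigl|\prod_{i=1}^4 f(\lambda_i\alpha)\Bigr|^2|K(\alpha)|^2\d\alpha.$$
Extract one factor via the classical Weyl bound $|f(\lambda_1\alpha)|\ll P^{3/4+\ep}$ valid on $\fn$, and use the periodicity of $f$ together with Hua's inequality and H\"older to bound $\int_{-T}^{T}|f(\lambda_i\alpha)|^6\d\alpha\ll TP^{7/2+\ep}$. An application of AM--GM to $\prod_{i=2}^4|f(\lambda_i\alpha)|^2$, combined with the choice $T=P^3$ (which also balances the tail estimate $P^8/T$), then gives
$$\int_\R |E(\mu)|^2\d\mu \ll P^{3/2+\ep}\cdot P^{7/2+\ep}=N^{5/3+\ep}.$$
By itself, Chebyshev's inequality yields only $\tZ_{4,3}(N)\ll \psi(N)^2 N^{1+\ep}$, precisely at the critical threshold.

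The decisive ingredient is a slowly-growing saving in the mean-square bound, which is furnished by Freeman's refinement of the minor-arc sup estimate: there is a function $\Psi(P)\to \infty$, growing arbitrarily slowly, such that $|f(\lambda_1\alpha)|\le P^{3/4}\Psi(P)^{-1/2}$ on the bulk of $\fn$, the exceptional set having measure small enough to be controlled by a secondary subdivision. Substituting this improvement into the calculation above yields $\int_\R|E(\mu)|^2\d\mu=o(N^{5/3})$, and on choosing $\psi(N)\to\infty$ to grow more slowly than $\Psi(N^{1/3})^{1/2}$, Chebyshev's inequality finally delivers $\tZ_{4,3}(N)=o(N)$, as required.

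The main obstacle is precisely the critical-threshold nature of the cubic estimates at $s=4$: Hua's bound $\int_0^1|f|^8\d\alpha\ll P^{5+\ep}$ leaves no polynomial room for manoeuvre, so the argument depends essentially on Freeman's device for converting the classical $O$-Weyl bound into an $o$-bound on minor arcs, a possibility afforded by the freedom to take $\psi$ growing sufficiently slowly in the statement of the theorem.
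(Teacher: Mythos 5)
Your framework is sound and is in fact equivalent to the paper's: writing the minor-and-trivial-arc contribution as $E(\mu)$, Plancherel gives $\int_\R|E(\mu)|^2\d\mu=\int_{\fm\cup\ft}|f_1\cdots f_4|^2|K_\pm(\alpha)|^2\d\alpha$, and Chebyshev with threshold $P\psi(N)^{-1}$ then yields $\tZ_{4,3}(N)\ll\psi(N)^2 P^{-2}\int_{\fm\cup\ft}|\ftil(\alpha)|^2 K_2^\pm(\alpha)\d\alpha$, which is exactly the bound the paper obtains via Lemma~\ref{lemma2.1} and Schwarz's inequality. So the whole weight of the argument lies on the single estimate
$$\int_{\fm\cup\ft}|f_1(\alpha)\cdots f_4(\alpha)|^2 K_2^\pm(\alpha)\d\alpha\ll P^5 L(P)^{-1},$$
and it is precisely here that your proposal has a gap.

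First, the pointwise input you invoke does not exist. The Bentkus--G\"otze--Freeman device gives decay of the \emph{product} $|f_1(\alpha)f_2(\alpha)|\ll P^2T(P)^{-c}$ uniformly on the Freeman minor arcs $\fm$, coming from the irrationality of $\lambda_1/\lambda_2$ (this is Lemma~\ref{lemma3.1}); there is no version that makes a single $|f(\lambda_1\alpha)|$ small on a ``bulk'' of $\fn$, because $\lambda_1\alpha$ alone can sit arbitrarily close to a rational of small denominator. Any proof at $s=4$ must exploit the incommensurability of two of the $\lambda_i$, and a decomposition into cases according to which of $|f_1|$, $|f_2|$ is small (the sets $\fp_1,\fp_2$ of \S\ref{sect:3}) or an equivalent device is unavoidable.

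Second, and more seriously, your arithmetic leaves $P^\ep$-losses that a slowly growing $\Psi$ cannot absorb. After extracting $P^{3/2+\ep}$ from Weyl and using the sixth moment $\int_0^1|f|^6\d\alpha\ll P^{7/2+\ep}$, you land at $P^{5+2\ep}\Psi(P)^{-1}$; since $\Psi(P)\to\infty$ arbitrarily slowly while $P^{2\ep}$ is a genuine power of $P$, the resulting bound is $\gg N$ and Chebyshev gives nothing. (You also carry a spurious factor $T$ through the display $\int_{-T}^T|f|^6\d\alpha\ll TP^{7/2+\ep}$ that contradicts the final line; the correct move is to control the tail via the kernel decay and Lemma~\ref{lemma2.3}, eliminating $T$ entirely.) The $\ep$-free bound $\ll P^5L(P)^{-1}$ is the whole difficulty; the paper obtains it by citing the delicate pruning argument of \cite[\S\S4--5]{W:FAF}, which splits $\fm$ according to rational approximations to $\lambda_1\alpha$ and $\lambda_2\alpha$, uses the product bound of Lemma~\ref{lemma3.1} where both are far from rationals, and uses explicit major-arc technology (with only $L(P)$-type, not $P^\ep$, losses) on the complementary pruned set. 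Your appeal to ``a secondary subdivision'' gestures at this but does not supply it, and the argument as written fails at the critical threshold.
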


Subject to the hypotheses of the statement of Theorem \ref{theorem1.4}, the methods discussed in \cite{W:FAF} may be applied to show that $\tZ_{s,k}(N)\ll 1$ when $s\ge s_1$. The best accessible triples for smaller values of $k$ stem from work of Boklan \cite{Bok:93} and Vaughan \cite{Va:86c}, \cite{Va:86se}. Thus, for a suitable positive number $\gamma$, one has the accessible triples
$$(s_1,\sigma_1,U(N))=(2^k,2^{1-k},(\log N)^\gamma)\quad (3\le k\le 5).$$
For larger values of $k$ the picture has recently been transformed by developments stemming from the second author's efficient congruencing approach to Vinogradov's mean value theorem (see \cite{W:12war}, \cite{W:12ec}, \cite{W:13ec2}). In particular, on defining the exponent $\sigma^*(k)$ by means of the relation
$$1/\sigma^*(k)=\begin{cases}2^{k-1},&\text{when $k=6,7$,}\\
2k(k-2),&\text{when $k\ge 8$,}
\end{cases}$$
it follows that for a suitable positive number $\gamma$ one has the accessible triples
$$(s_1,\sigma_1,U(N))=(2k^2-2k-8,\sigma^*(k),N^\gamma)\quad (k \ge 6)$$
and
$$(s_1,\sigma_1,U(N))=(2k^2-2,\sigma^*(k),N^{1/k-\ep})\quad (k \ge 6).$$
Weaker conclusions of similar type could be extracted from the earlier work of Boklan \cite{Bok:94}, Ford
\cite{Ford:95}, Heath-Brown \cite{HB:88}, and Wooley \cite{W:92vin}. For a comprehensive discussion of the various ingredients, see \cite[\S 7]{W:FAF}.\par

When $s$ is relatively close to $s_1$, one can obtain conclusions sharper than those available from Theorem \ref{theorem1.4} by including some of the excess variables in a mean value together with an integral over the exceptional set. For example, by adapting the ``slim" technology of Wooley \cite{W:03slim}, along with the refinements of Kawada and Wooley \cite{KW:10rel}, \cite{KW:10dav}, in \S\ref{sect:4} we derive the following refinement for smaller exponents.

\begin{theorem}\label{theorem1.6}
Suppose that $\psi(N)$ grows sufficiently slowly in terms of $\L$ and $\tau$.  Then with the values of $k$, $s$ and $\beta$ from the following table, it follows that for every $\ep > 0$ one has $\tZ_{s,k}(N)\ll N^{\beta+\ep}$.
\end{theorem}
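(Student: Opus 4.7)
The plan is to follow Freeman's variant of the Davenport-Heilbronn method, combined with the ``slim'' Fourier-analytic technique of \cite{W:03slim} and the refinements of \cite{KW:10rel,KW:10dav}. Set $P=N^{1/k}$ and represent $\cN_{s,k}^\tau(P;\L,\mu)$ as a Davenport-Heilbronn integral of $\prod_{i=1}^s f(\lambda_i\alpha)$ against an integrable kernel $\cK(\alpha)$ that captures the indicator of $|F(\x)-\mu|<\tau$; a standard Selberg majorant/minorant sandwich supplies such a $\cK$. Dissect the $\alpha$-line into the major arcs $\fN$, the truncated minor arcs $\fn\cap[-T,T]$ with $T$ a small power of $P$, and a tail $|\alpha|>T$. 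Standard major-arc analysis, in the spirit of \cite{W:FAF}, extracts the main term $2\tau\Omega_{s,k}(\L,\mu/N)N^{s/k-1}$ uniformly in $\mu\in(N/2,N]$, while the tail contributes negligibly through the decay of $\cK$.

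Let $E(\mu)=\cN_{s,k}^\tau(P;\L,\mu)-2\tau\Omega_{s,k}(\L,\mu/N)N^{s/k-1}$, so that $|E(\mu)|>N^{s/k-1}\psi(N)^{-1}$ for $\mu\in\ctZ_{s,k}^\tau(N;\psi;\L)$. Setting $\rho(\mu)=E(\mu)/|E(\mu)|$ and introducing the Fourier weight $\Xi(\alpha)=\int_{\ctZ_{s,k}^\tau(N;\psi;\L)}\rho(\mu)e(-\alpha\mu)\d\mu$, I obtain the pivotal inequality
$$\tZ_{s,k}(N)\cdot N^{s/k-1}\psi(N)^{-1}\ll \biggl|\int_{\fn\cap[-T,T]}\prod_{i=1}^s f(\lambda_i\alpha)\cK(\alpha)\Xi(\alpha)\d\alpha\biggr|.$$
The essential Fourier input is the Parseval identity $\int_\R|\Xi(\alpha)|^2\d\alpha=\tZ_{s,k}(N)$, which converts exceptional-set measure into an $L^2$ norm on the dual side. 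Cauchy-Schwarz then pairs $\Xi$ with the remaining integrand: applying the minor-arc $L^\infty$ bound $\sup_\fn|f|\ll P^{1-\sigma+\ep}$ to $2s-s_1$ of the factors $|f|^2$ and the mean value $\int_\fn|f|^{s_1}\d\alpha\ll P^{s_1-k}U(P)^{-1}$ to the remaining $s_1$ factors recovers the bound furnished by Theorem~\ref{theorem1.4}.

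The principal obstacle is that when $s$ is only slightly larger than $\tfrac12 s_1$, this blunt $L^\infty$/mean-value split is too wasteful to yield a sharp exponent $\beta$. Here the slim device of \cite{W:03slim} becomes crucial: instead of decoupling the product of $f$'s completely, one weaves additional auxiliary Weyl-sum variables together with the characteristic function of the exceptional set into a single weighted moment, whose evaluation exploits the efficient-differencing refinements of \cite{KW:10rel,KW:10dav}. This trades a modest loss on the exponential-sum side for a substantial Parseval gain. Calibrating the variable split for each row of the table, together with the accessible triples recorded in the discussion following Theorem~\ref{theorem1.4}, yields the exponents $\beta$ claimed in the statement.
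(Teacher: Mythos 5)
Your high-level outline is right as far as it goes, and you correctly identify that Theorem~\ref{theorem1.6} rests on pairing a Parseval-type $L^2$ bound for the Fourier transform over the exceptional set with the ``slim'' technology of \cite{W:03slim}, \cite{KW:10rel} and \cite{KW:10dav}, so that some of the excess factors $f(\lambda_i\alpha)$ are absorbed into the same weighted moment as $|H(\alpha)|^2$. This is indeed the mechanism in the paper: one replaces the estimate $\int|H|^2K_1\,\d\alpha \le 2Z$ of Lemma~\ref{lemma2.1} by a bound for $\cJ_i=\int|f_i(\alpha)|^{2^{k-2}}|H(\alpha)|^2K_1(\alpha)\,\d\alpha$, and the gain of roughly $P^{-1/2}$ in the dominant term of $\cJ_i$ is precisely the ``substantial Parseval gain'' you describe.

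However, there is a genuine gap. You treat the slim moment estimate as an import from \cite{W:03slim} and \cite{KW:10rel,KW:10dav}, but those lemmas are stated for Waring's problem --- a discrete equation with an exceptional set that is a set of integers --- and they cannot be applied as black boxes here. The paper has to establish new versions of these bounds adapted to the Diophantine inequality setting (Lemmas~\ref{lemma4.1} and~\ref{lemma4.2}): one applies Weyl (and, for the sharpest exponents, Davenport) differencing to $|f(\lambda\alpha)|^{2^j}$, reduces the moment via the kernel identity \eqref{2.9} to a count $\cW(\mu,\nu)$ of solutions of an inequality of the form $|\lambda h_1\cdots h_j p_j(x;\h)+\mu-\nu|<\delta$, and then integrates over $\cZ^2$ with a careful split between the degenerate case $h_1\cdots h_j=0$, a Cauchy--Schwarz step, and a divisor-function estimate for the nondegenerate case. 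None of this is in your proposal, and it is exactly here that the inequality setting differs from the Waring setting: one works with the measure of $\cZ^2\subseteq\R^2$ rather than a cardinality, with the Freeman kernel $K_1$ and the width parameter $\delta$, and there is no $p$-adic structure to exploit. This is why the resulting exponents in Table~2 are weaker than the Waring-problem analogues.

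Two smaller points. First, your term ``efficient-differencing refinements'' is a misnomer: the relevant refinement in \cite{KW:10rel} is a classical Davenport differencing step (Cauchy--Schwarz before the final count), not the efficient congruencing method, which plays no role here. Second, you never explain how the specific values of $\beta$ in the table are extracted; the paper does this via the explicit dependence on $s$, $s_1$, $\sigma$ and $k$ in the two bounds of Theorem~\ref{theorem4.3}, combined with the accessible triples $(2^k,2^{1-k},(\log N)^\gamma)$ for $3\le k\le 5$. Without stating and proving the Lemma~\ref{lemma4.1}/\ref{lemma4.2} analogues and carrying out the accompanying calibration, the proposal remains a sketch rather than a proof.
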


$$\boxed{\begin{matrix} k&3&4&4&4&5&5&5&5&5&5&5\\
s&7&13&14&15&25&26&27&28&29&30&31\\
\beta&\tfrac{1}{3}&\tfrac{5}{8}&\tfrac{1}{2}&\tfrac{7}{16}&\tfrac{3}{4}&\tfrac{7}{10}&\tfrac{13}{20}&\tfrac{3}{5}&\tfrac{23}{40}&\tfrac{11}{20}&\tfrac{3}{8}\end{matrix}}$$
\vskip.2cm
\begin{center}\text{Table 2: Exponents for slim exceptional sets}\end{center}
\vskip.1cm
A direct application of Theorem \ref{theorem1.4} would yield weaker estimates. Thus, for example, whereas Theorem \ref{theorem1.4} shows that $\tZ_{7,3}(N)\ll N^{1/2+\ep}$, one sees from Theorem \ref{theorem1.6} that $\tZ_{7,3}(N) \ll N^{1/3+\ep}$.\par

At the cost of requiring an extra variable, in \S\ref{sect:5} we provide a short-interval analogue of Theorem \ref{theorem1.4}. Here we write $\ctZ_{s,k}^\tau(N,M;\psi;\L)$ for the set of real numbers $\mu\in [N,N+M]$ for which (\ref{1.7}) holds, and $\tZ_{s,k}(N,M)$ for its measure.

\begin{theorem}\label{theorem1.7}
Suppose that $k \ge 3$ and that $(s_1,\sigma_1,U)$ forms an accessible triple for $k$. Then whenever
$$s\ge \max\{k+1,\tfrac{1}{2}s_1\},\quad \sigma<\sigma_1,\quad M\ge N^{1-1/k},$$
and $\psi(N)$ grows sufficiently slowly in terms of $s$, $\sigma$, $k$, $\L$, and $\tau$, one has
$$\tZ_{s+1,k}(N,M) \ll M N^{-(2s-s_1)\sigma/k}U(N^{1/k})^{\ep-1}.$$
\end{theorem}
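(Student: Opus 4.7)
The plan is to adapt the Fourier-analytic framework used in the proof of Theorem~\ref{theorem1.4}, incorporating the extra Weyl-sum variable specifically to obtain a short-interval factor of $M$ in place of $N$. First I would express
\[
\cN_{s+1,k}^\tau(N^{1/k};\L,\mu)=\int_{-\infty}^\infty F(\alpha)K_\tau(\alpha)e(-\mu\alpha)\d\alpha + o(N^{(s+1)/k-1}),
\]
where $F(\alpha)=\prod_{i=1}^{s+1}f(\lambda_i\alpha)$ and $K_\tau$ is the standard kernel approximating the indicator function of $(-\tau,\tau)$, with $\|K_\tau\|_\infty \ll \tau$. Splitting the integration into major arcs (which supply the main term $2\tau\Omega_{s+1,k}(\L,\mu/N)N^{(s+1)/k-1}$), minor arcs (which produce the error $E(\mu)$), and a trivial tail $|\alpha|>T$ handled via the decay of $K_\tau$, Chebyshev's inequality reduces matters to a mean-square estimate for $\int_N^{N+M}|E(\mu)|^2\d\mu$.

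Writing $P=N^{1/k}$ and expanding $f(\lambda_{s+1}\alpha)=\sum_{y=1}^P e(\lambda_{s+1}\alpha y^k)$ yields
\[
E(\mu)=\sum_{y=1}^P E_s(\mu-\lambda_{s+1}y^k),\qquad E_s(\mu)=\int_{\fn}F_s(\alpha)K_\tau(\alpha)e(-\mu\alpha)\d\alpha,
\]
where $F_s=\prod_{i=1}^s f(\lambda_i\cdot)$. Applying Cauchy--Schwarz in the $y$-sum and shifting variables gives
\[
\int_N^{N+M}|E(\mu)|^2\d\mu \ll P\int_\R m(\mu')|E_s(\mu')|^2\d\mu',
\]
where $m(\mu')$ counts the number of $y\in[1,P]$ for which $\lambda_{s+1}y^k\in[N-\mu',N+M-\mu']$.

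The crux of the argument, and the main obstacle, is establishing the averaged multiplicity bound $\int_\R m(\mu')|E_s(\mu')|^2\d\mu'\ll (PM/N)\|E_s\|_2^2$. Since the centres $\lambda_{s+1}y^k$ span a range of measure $\asymp N$ as $y$ varies over $[1,P]$, the average multiplicity is $\asymp PM/N$, and the hypothesis $M\ge N^{1-1/k}=N/P$ is precisely what ensures this quantity is at least $1$. Although the maximum multiplicity may grow to order $M^{1/k}$ in small regions near the lower end of the range of $y^k$, these regions carry a correspondingly small share of the $L^1$-mass of $|E_s|^2$. Alternatively, one can work directly from the expansion $|E(\mu)|^2=\sum_{y_1,y_2}E_s(\mu-\lambda_{s+1}y_1^k)\overline{E_s(\mu-\lambda_{s+1}y_2^k)}$, expressing the $\mu$-integral via the Fourier transform of $\mathbf{1}_{[N,N+M]}$, which localises $\alpha_1-\alpha_2$ to scale $1/M\le P^{1-k}$ comparable to the major-arc width for $f(\lambda_{s+1}\cdot)$; a Cauchy--Schwarz reduction then couples this localisation with the cancellation in $f(\lambda_{s+1}\alpha_1)\overline{f(\lambda_{s+1}\alpha_2)}$ to recover the same bound.

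Once this key estimate is in hand, the remaining $s$-variable mean square is controlled by the standard distribution of variables on the $2s$ factors of $|F_s|^2$: applying the minor-arc supremum bound $|f(\alpha)|\ll P^{1-\sigma+\ep}$ to $2s-s_1$ factors and the mean value $\int_{\fn}|f(\alpha)|^{s_1}\d\alpha\ll P^{s_1-k}U(P)^{-1}$ to the remaining $s_1$ factors, one obtains
\[
\|E_s\|_2^2=\int_{\fn}|F_s(\alpha)|^2K_\tau(\alpha)^2\d\alpha\ll \tau^2 P^{2s-k-(2s-s_1)\sigma+\ep}U(P)^{-1}.
\]
Assembling these estimates produces $\int_N^{N+M}|E(\mu)|^2\d\mu\ll MP^{2s+2-2k-(2s-s_1)\sigma+\ep}U(P)^{-1}$, and a final application of Chebyshev's inequality, together with the assumption that $\psi(N)$ grows sufficiently slowly, gives the advertised bound $\tZ_{s+1,k}(N,M)\ll MN^{-(2s-s_1)\sigma/k}U(N^{1/k})^{\ep-1}$.
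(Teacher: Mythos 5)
Your high-level strategy differs from the paper's: you bound the measure of the exceptional set by Chebyshev applied to $\int_N^{N+M}|E(\mu)|^2\,\d\mu$, whereas the paper integrates the minor-arc contribution against an exponential integral $H_{\eta,\cZ}(\alpha)$ supported on the exceptional set and distributes factors via H\"older. At the level of the $L^2$-in-$\mu$ computation these are close cousins, so the difference in framework is not the problem.

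The genuine gap is the multiplicity bound that you yourself flag as ``the crux of the argument, and the main obstacle.'' After expanding $f(\lambda_{s+1}\alpha)=\sum_{y\le P}e(\lambda_{s+1}\alpha y^k)$ and applying Cauchy--Schwarz, you need $\int_\R m(\mu')|E_s(\mu')|^2\,\d\mu'\ll (PM/N)\|E_s\|_2^2$, with $m(\mu')$ the number of $y\le P$ for which $\lambda_{s+1}y^k$ lies in an interval of length $M$. The unconditional bound is $\int m|E_s|^2\le (\sup m)\|E_s\|_2^2$, and $\sup_{\mu'} m(\mu')\asymp M^{1/k}$, coming from $y\ll M^{1/k}$: the clustering of the values $\lambda_{s+1}y^k$ near $0$. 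For the extremal $M\asymp P^{k-1}$ this is $\sup m\asymp P^{(k-1)/k}$, while $PM/N\asymp 1$, so you lose a factor $P^{(k-1)/k}$ that destroys the theorem. Your informal claim that the high-multiplicity region carries ``a correspondingly small share of the $L^1$-mass of $|E_s|^2$'' is not established and has no obvious reason to hold; the high-multiplicity $\mu'$ (namely $\mu'$ close to $N$) are exactly the $\mu'$ of size $\asymp N$ where the minor-arc error $E_s(\mu')$ can be as large as anywhere else. The alternative Fourier sketch you offer faces the same issue: the localisation $|\alpha_1-\alpha_2|\ll 1/M$ gives no cancellation in $f(\lambda_{s+1}\alpha_1)\overline{f(\lambda_{s+1}\alpha_2)}$ when $\alpha_1\approx\alpha_2$, and the full-sum diagonal count over $(x,y)\in[1,P]^2$ with $|\lambda(x^k-y^k)+\mu-\nu|<\delta$ genuinely admits off-diagonal solutions with small $x,y$.

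What you are missing is the device that makes the extra variable earn its keep. The paper restricts to $M\le \lambda(k-1)(cP)^{k-1}$ (subdividing otherwise), observes that any integer solution with $\mu\in[N,N+M]$ must have $\max_i|x_i|>cP$, and then uses the identity
\[
\int_{-\infty}^\infty \prod_{i=1}^{s+1}\bigl(\fh_i(\alpha)-\fg_i(\alpha)\bigr)e(-\alpha\mu)K_\pm(\alpha)\,\d\alpha=0
\]
(where $\fh_i$ is the full sum and $\fg_i$ the sum over $(cP,P]$) to perform an inclusion--exclusion. Every surviving term contains at least one short-range factor $\fg_i(\alpha)$, and that factor is paired with $H_{\eta,\cZ}$ in Lemma~\ref{lemma2.2}, whose proof forces $x=y$ precisely because $cP<x,y\le P$ gives $|\lambda(x^k-y^k)|>|\lambda|k(cP)^{k-1}>|\mu-\nu|+\delta$. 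In your set-up you never localise the extra variable $y$ to $(cP,P]$, so this diagonal argument is unavailable and the asserted multiplicity bound fails at the extremes. To repair the proof you would need to introduce the short-sum decomposition (or an equivalent localisation forcing $y\gg P$) before expanding in $y$.
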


When $s$ is somewhat smaller than is required to bound the exceptional set $\cZ_{s,k}^\tau (N;\L)$ successfully, we can instead aim for non-trivial lower bounds for the measure of the set of $\mu \in [-N,N]$ for which the inequality (\ref{1.1}) does have a solution. We let $Y_{s,k}(N)$ denote the measure of the set
$$\cY_{s,k}^\tau(N;\L)=[-N,N] \setminus \cZ_{s,k}^\tau(N;\L).$$
As in the analogous questions related to Waring's problem, the lower bounds for $Y_{s,k}(N)$ which we derive in \S\ref{sect:9} depend upon suitable upper bounds for mean values of exponential sums. We say that the exponent $\Delta=\Delta_{s,k}$ is {\it admissible} if for each $\ep>0$, whenever $R \le P^{\eta}$ and $\eta$ is sufficiently small, one has
\begin{equation}\label{1.9}
\int_0^1|g(\alpha)|^{2s}\d\alpha \ll P^{2s-k+\Delta+\ep}.
\end{equation}

\begin{theorem}\label{theorem1.8}
If $\Delta=\Delta_{s,k}$ is admissible, then for every $\ep > 0$ one has
$$Y_{s,k}(N) \gg \tau^2 N^{1-\Delta/k-\ep}.$$
In particular, one has $Y_{3,3}(N) \gg \tau^2 N^{\gamma-\ep}$, where $\gamma = (166-\sqrt{2833})/123 > 11/12$.
\end{theorem}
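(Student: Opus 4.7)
The plan is to run the Davenport--Heilbronn second-moment method against the admissible mean value hypothesis (\ref{1.9}): construct a non-negative smoothed counting function whose support on $[-N,N]$ is contained in $\cY_{s,k}^\tau(N;\L)$, and then bound $Y_{s,k}(N)$ from below by a Cauchy--Schwarz comparison of its first and second moments. Choose $P = cN^{1/k}$ with $c=c(s,\L)$ small enough to guarantee $|F(\x)|\le N/2$ for every $\x\in \cA(P,R)^s$, set $G(\alpha)=\prod_{i=1}^s g(\lambda_i\alpha)$, and let $K(\alpha)=\bigl(\sin(\pi\tau\alpha)/(\pi\alpha)\bigr)^2$, which is non-negative with Fourier transform $\hat K(t)=\max(\tau-|t|,0)$. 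Define
\[
N(\mu)=\int_\R G(\alpha)K(\alpha)e(-\mu\alpha)\,\d\alpha=\sum_{\x\in \cA(P,R)^s}\max(\tau-|F(\x)-\mu|,0).
\]
Since $N$ is non-negative and strictly positive precisely when some $\x$ satisfies (\ref{1.1}), its support on $[-N,N]$ is contained in $\cY_{s,k}^\tau(N;\L)$.

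Next, compute the two moments. Each $\x$ with $F(\x)\in[-N+\tau,N-\tau]$ contributes $\int_\R\max(\tau-|t|,0)\,\d t=\tau^2$ to $\int_{-N}^N N(\mu)\,\d\mu$, so the choice of $P$ yields $\int_{-N}^N N(\mu)\,\d\mu\gg \tau^2|\cA(P,R)|^s\gg \tau^2 P^s$. By Parseval, $\int_\R N(\mu)^2\,\d\mu=\int_\R |G(\alpha)|^2 K(\alpha)^2\,\d\alpha$. The elementary inequality $\prod_{i=1}^s a_i\le s^{-1}\sum_i a_i^s$ applied with $a_i=|g(\lambda_i\alpha)|^2$ gives
\[
|G(\alpha)|^2\le \frac{1}{s}\sum_{i=1}^s|g(\lambda_i\alpha)|^{2s}.
\]
For each $i$, the change of variable $\beta=\lambda_i\alpha$, the pointwise bound $K(\alpha)^2\le \tau^2 K(\alpha)$, and the Poisson identity $\sum_{n\in\Z}K(\beta+n)=\tau$ together reduce $\int K^2|g(\lambda_i\alpha)|^{2s}\,\d\alpha$ to a constant multiple of $\tau^3\int_0^1|g(\beta)|^{2s}\,\d\beta$. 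The admissibility of $\Delta$ then yields $\int_\R N(\mu)^2\,\d\mu\ll \tau^3 P^{2s-k+\Delta+\ep}$.

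Cauchy--Schwarz now delivers
\[
\Bigl(\int_{-N}^N N(\mu)\,\d\mu\Bigr)^2\le Y_{s,k}(N)\int_\R N(\mu)^2\,\d\mu,
\]
which on rearrangement and using $P\asymp N^{1/k}$ produces the asserted bound $Y_{s,k}(N)\gg \tau^2 N^{1-\Delta/k-\ep}$. For the specific claim concerning $Y_{3,3}(N)$, one inserts the best admissible exponent $\Delta_{3,3}$ for the sixth moment of the cubic smooth Weyl sum --- the same optimisation responsible for the constant $2556+48\sqrt{2833}$ in Theorem~\ref{theorem1.3} --- and a direct calculation gives $1-\Delta_{3,3}/3=(166-\sqrt{2833})/123$, which one verifies exceeds $11/12$. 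The principal technical nuisance is the Poisson step after each $\lambda_i$-rescaling: the kernel $K(\lambda_i^{-1}\cdot)$ inherits width parameter $\tau/|\lambda_i|$, so one must either absorb $\L$-constants to arrange $\tau/|\lambda_i|\le 1$ (as is permitted by the rescaling conventions noted after (\ref{1.2})) or carry an auxiliary sum of integer shifts through the Parseval identity; all remaining ingredients (AM-GM, Parseval, Cauchy--Schwarz, and the admissibility hypothesis) are essentially off-the-shelf.
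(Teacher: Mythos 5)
Your argument is correct, and it proves the theorem. The underlying machinery is the same as the paper's — the Fej\'er-type kernel $K(\alpha)=(\sin\pi\tau\alpha/\pi\alpha)^2$, the observation that each $\x\in\cA(P,R)^s$ contributes $\tau^2$ to the smoothed count, and the admissibility hypothesis (\ref{1.9}) to control the $2s$th moment — but you organize the Cauchy--Schwarz step on the physical ($\mu$-)side rather than the Fourier ($\alpha$-)side, which is a genuine reformulation. The paper introduces the encoding integral $H(\alpha)=\int_\cY e(-\alpha\mu)\,\d\mu$, writes $\tau^2P^s\ll\int_\R \prod_j g_j(\alpha)\,H(\alpha)K(\alpha)\,\d\alpha$, applies H\"older with $s+1$ factors to split off $(\int|H|^2K)^{1/2}$, and calls Lemma \ref{lemma2.1} to bound that factor by $\tau Y^{1/2}$. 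You instead keep the non-negative smoothed counting function $N(\mu)=\int_\R G(\alpha)K(\alpha)e(-\mu\alpha)\,\d\alpha$, observe its support in $[-N,N]$ sits inside $\cY$, apply Cauchy--Schwarz directly (picking up $Y^{1/2}$ immediately), and invoke Parseval to convert $\int N^2$ into $\int|G|^2K^2$; your AM--GM step replaces the paper's multi-factor H\"older. The two routes are dual to one another — your $Y^{1/2}$ from Cauchy--Schwarz plays the role of the paper's $(\int|H|^2K)^{1/2}\ll\tau Y^{1/2}$ — and your more careful Poisson accounting in the periodicity fold, yielding $\int N^2\ll\tau^3P^{2s-k+\Delta+\ep}$, actually gives the marginally stronger $Y\gg\tau P^{k-\Delta-\ep}$; since $\tau\le1$ this implies the stated $Y\gg\tau^2N^{1-\Delta/k-\ep}$. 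If you want to avoid the Poisson fuss entirely, you can replace it with the paper's Lemma \ref{lemma2.3}: from $K^2\le\tau^2K$ and $K(\alpha)\ll_\tau\min\{1,\alpha^{-2}\}$ you get $\int K^2|g(\lambda_i\alpha)|^{2s}\,\d\alpha\ll_{\L,\tau}\tau^2\int_0^1|g(\beta)|^{2s}\,\d\beta$, which suffices for the claimed $\tau^2$ dependence. Your closing computation $1-\Delta_{3,3}/3=(166-\sqrt{2833})/123$ using $\Delta_{3,3}=(\sqrt{2833}-43)/41$ (from \cite{W:3cubes}) agrees with the paper.
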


We note that Theorem \ref{theorem1.8} delivers a non-trivial conclusion even when $\tau$ is an explicit function of $N$, provided only that $\tau^{-1}$ is somewhat smaller than $(N^{1-\Delta/k})^{1/2}$.\par

Finally, we consider the approximation of real numbers by linear combinations of two primes, a topic which may be viewed as an analogue of the binary Goldbach problem. Suppose that $\lambda_1$ and $\lambda_2$ are real numbers with $\lambda_1/\lambda_2$ irrational.  Let $\cZ^*(X;\L,\tau)$ denote the set of real numbers
$\mu \in [0,X]$ for which the inequality
$$|\lambda_1p_1+\lambda_2p_2-\mu| < \tau$$
has no solution in prime numbers $p_1, p_2$. In \S\ref{sect:10} we derive a conclusion free of the spacing condition on $\mu$ present in earlier work of of Br{\"u}dern, Cook and Perelli \cite{BCP:97} and the first author \cite{P:02lfp}.

\begin{theorem}
\label{primes}  One has ${\rm{meas}}(\cZ^*(X;\L,\tau)) = o(X)$.
\end{theorem}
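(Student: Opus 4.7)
The plan is to apply Freeman's refinement of the Davenport-Heilbronn method to exponential sums over primes, and to extract the exceptional set bound via a direct $L^2$ argument rather than through the spacing considerations of earlier work. With $X$ large, define
$$f_j(\alpha)=\sum_{p\le X}(\log p)e(\lambda_j\alpha p)\quad (j=1,2),$$
and take the non-negative Fej\'er-type kernel $K(\alpha)=\left(\sin(\pi\tau\alpha)/(\pi\alpha)\right)^2$, whose Fourier transform $\widehat K(t)=\max(\tau-|t|,0)$ is non-negative and supported in $[-\tau,\tau]$. The quantity
$$\cR(\mu)=\int_\R f_1(\alpha)f_2(\alpha)K(\alpha)e(-\alpha\mu)\d\alpha=\sum_{p_1,p_2\le X}(\log p_1)(\log p_2)\widehat K(\lambda_1 p_1+\lambda_2 p_2-\mu)$$
is non-negative, so $\cR(\mu)>0$ forces a prime solution of the inequality. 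It therefore suffices to show that the set of $\mu\in[0,X]$ for which $\cR(\mu)=0$ has measure $o(X)$.

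I would dissect $\R=\fM\cup\fm\cup\ft$, where $\fM=\{|\alpha|\le(\log X)^B/X\}$ is a narrow neighbourhood of the origin, $\ft=\{|\alpha|>T\}$ for a parameter $T$ chosen large enough to control the tail, and $\fm$ is the remainder, with short intervals about nonzero rationals $a/(q\lambda_j)$ of denominator $q\le(\log X)^B$ excised and treated separately. On $\fM$, the Siegel-Walfisz theorem replaces $f_j(\alpha)$ by $\int_0^X e(\lambda_j\alpha t)\d t$ up to an acceptable error, yielding
$$\cR_\fM(\mu)\sim\int_{[0,X]^2}\widehat K(\mu-\lambda_1 t_1-\lambda_2 t_2)\d t_1\d t_2\gg X$$
uniformly for $\mu$ in the main part of $[0,X]$ (boundary contributions are negligible, and sign arrangements of $\lambda_1,\lambda_2$ are handled by elementary rescaling). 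On $\ft$, the decay $K(\alpha)\ll\alpha^{-2}$ together with the trivial bound $|f_j|\le X\log X$ give $\sup_\mu|\cR_\ft(\mu)|=o(X)$ provided $T$ is large enough. The pruned neighbourhoods of nonzero rationals are handled pointwise by a secondary major-arc analysis at each excluded rational, via Siegel-Walfisz and the resulting Gauss-sum savings.

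It remains to bound $\cR_\fm(\mu)$ in mean square. Because $\lambda_1/\lambda_2$ is irrational, a Dirichlet-approximation argument shows that for every $\alpha\in\fm$ at least one of $\lambda_1\alpha,\lambda_2\alpha$ lies in the standard minor arcs for the linear exponential sum over primes, so that Vinogradov's estimate yields
$$\sup_{\alpha\in\fm}\min(|f_1(\alpha)|,|f_2(\alpha)|)\ll X(\log X)^{-A}$$
for any fixed $A>0$. Splitting $\fm=\fm_1\cup\fm_2$ according to which factor is small and applying Plancherel gives
$$\int_\R|\cR_\fm(\mu)|^2\d\mu=\int_\fm|f_1 f_2 K|^2\d\alpha\ll\sup_{\fm_1}|f_1|^2\int_\R|f_2|^2K^2\d\alpha+\sup_{\fm_2}|f_2|^2\int_\R|f_1|^2K^2\d\alpha.$$
Expanding $\int_\R|f_j|^2K^2\d\alpha$ and using that $\widehat{K^2}=\widehat K\ast\widehat K$ is supported in $[-2\tau,2\tau]$ collapses it to a diagonal sum of size $O(X\log X)$. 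Consequently $\int_\R|\cR_\fm(\mu)|^2\d\mu\ll X^3(\log X)^{1-2A}$, and Chebyshev's inequality furnishes
$$\text{meas}\{\mu\in[0,X]:|\cR_\fm(\mu)|\gg X\}\ll X(\log X)^{1-2A}=o(X),$$
as required.

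The chief obstacle is arranging the dissection so that Vinogradov's minor-arc bound applies uniformly across $\fm$, despite $\fm$ having large diameter in $\alpha$-space. The essential input is a careful transfer between the minor-arc hypothesis for the canonical prime exponential sum $f(\beta;X)=\sum_{p\le X}(\log p)e(\beta p)$ at $\beta=\lambda_j\alpha\bmod 1$ and the geometric description of $\fm$; it is here that the irrationality of $\lambda_1/\lambda_2$ plays its essential role, by precluding $\lambda_1\alpha$ and $\lambda_2\alpha$ from simultaneously admitting major-arc approximations of comparably small denominator.
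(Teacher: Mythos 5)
The setup (Fej\'er kernel, dissection into $\fM\cup\fm\cup\ft$, showing $\cR(\mu)>0$ forces a solution) matches the paper's, but there is a critical gap in the minor-arc step that invalidates your $L^2$/Chebyshev route. You assert that Vinogradov's estimate gives $\sup_{\alpha\in\fm}\min(|f_1(\alpha)|,|f_2(\alpha)|)\ll X(\log X)^{-A}$ uniformly on $\fm$ for every fixed $A>0$. That bound is not available. The Bentkus--G\"otze--Freeman technology, which is what the paper invokes through \cite[Lemma 2]{P:02lfp}, yields only $\sup_{\alpha\in\fm}|h_1(\alpha)h_2(\alpha)|\ll X^2L(X)^{-1}$ where $L(X)\to\infty$ at an \emph{inexplicit} rate governed by how well $\lambda_1/\lambda_2$ is approximable by rationals. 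The classical $(\log X)^{-A}$ saving holds uniformly only on a sparse sequence of scales determined by the continued-fraction convergents of $\lambda_1/\lambda_2$; Freeman's contribution was to make the saving hold for \emph{all} large $X$, at the cost of making it unquantified. That is precisely why the theorem gives $o(X)$ rather than any logarithmic or power saving, and your proof conflates the two regimes.

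The gap is fatal to the Plancherel/Chebyshev step. The moment $\int_\R|f_j(\alpha)|^2K(\alpha)^2\d\alpha$ collapses to the diagonal $\sum_{p\le X}(\log p)^2\asymp X\log X$, so with the correct minor-arc input one only obtains $\int_\R|\cR_\fm(\mu)|^2\d\mu\ll X^3L(X)^{-1}\log X$, whence Chebyshev gives
$${\rm meas}\{\mu\in[0,X]:|\cR_\fm(\mu)|\gg X\}\ll XL(X)^{-1}\log X,$$
which need not be $o(X)$ since $L(X)$ may grow slower than $\log X$. The paper evades this by introducing the exponential integral $H(\alpha)=H_\cZ(\alpha)$ over the exceptional set, integrating the major-arc lower bound $\gg\tau^2\mu$ over $\cZ$, and applying Schwarz's inequality to obtain $\tau^2XZ\ll\cI_1^{1/2}\cI_2^{1/2}$ with $\cI_1=\int|H|^2K\d\alpha$ and $\cI_2=\int_{\fm\cup\ft}|h_1h_2|^2K\d\alpha$. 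Lemma \ref{lemma2.1} gives $\cI_1\ll\tau^2Z$ with no logarithmic loss, and the Freeman saving on $\cI_2$ then delivers $Z\ll\tau^{-2}XL(X)^{-1/2}=o(X)$. This exponential-integral-over-the-exceptional-set device, not raw Chebyshev, is the essential extra idea. As a secondary remark, the proposed excision of neighbourhoods of non-zero rationals $a/(q\lambda_j)$ and a ``secondary major-arc analysis'' at each is unnecessary once one works with the product $|h_1h_2|$, since the irrationality of $\lambda_1/\lambda_2$ already makes that product small across all of $\fm$.
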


It would appear that a quantitative version of this result is currently inaccessible, although in the special case where $\lambda_1/\lambda_2$ is algebraic, such a conclusion has recently been obtained by Br\"udern, Kawada and Wooley \cite{BKW8}. In this situation, an explicit bound on the failures of the asymptotic formula is achieved by \cite[Theorem 1.5]{BKW8}.\par

The results of this paper are motivated by analogous considerations in Waring's problem, in which for suitable values of $s$ one seeks to represent all sufficiently large integers $n$ in the form $x_1^k+\dots +x_s^k=n$. When $s$ fails to be large enough to establish such a conclusion, one may instead try to bound the number of ``exceptional" integers $n$ that fail to admit such a representation. In particular, if the Hardy-Littlewood method produces representations for all large $n$ whenever $s\ge \cG(k)$, then one can typically show that the number of exceptional integers $n\le X$ is $o(X)$ whenever $s\ge \frac{1}{2}\cG(k)$. There has been considerable recent work aimed at refining our understanding of these exceptional sets. Although quantitative bounds
are ultimately connected to the savings that one can achieve on a suitably defined set of minor arcs, the power and flexibility of the methods has recently been enhanced by technology in which an exponential sum over the set of exceptions is used to better exploit extra variables (see \cite{BKW1}, \cite{BKW3}, \cite{BW:04short}, \cite{KW:10rel}, \cite{KW:10dav}, \cite{Kum:05}, \cite{W:02slimcube}, \cite{W:02slimsquare} and \cite{W:03slim}).\par

Historically, the situation for Diophantine inequalities has been somewhat different. For inequalities of the shape (\ref{1.1}), the method of Davenport and Heilbronn requires one to restrict to a possibly sparse sequence of box sizes defined in terms of the continued fraction convergents to some ratio $\lambda_i/\lambda_j$. Such a
restriction on the box size offers little hope of analysing the set of exceptional $\mu$ for which (\ref{1.1}) has no integer solution. However, work of Freeman \cite{Freem:alb}, \cite{Freem:af}, inspired by the methods of Bentkus and G\"otze \cite{BG:99} and refined by the second author \cite{W:FAF}, has changed the perspective. As a result of these innovations, one is now able to obtain asymptotics for the number of solutions in all sufficiently large boxes, albeit with inexplicit error terms arising from the minor and trivial arcs. Moreover, it transpires that quantitative upper bounds on the measure of the exceptional set are accessible by employing Hardy-Littlewood dissections with respect to various $\lambda_i\alpha$, where $\alpha$ is the variable of integration. This idea has been useful in previous work on inequalities (see for example the proof of \cite[Lemma 2.3]{P:ineq3}, and the amplification procedure of \cite{W:FAF}) when attempting to obtain optimal estimates for mixed mean values. The points for which none of the $\lambda_i \alpha$ satisfy a classical minor arc condition are handled using the Bentkus-G\"otze-Freeman technology, which suffices to show that the
contribution from such $\alpha$ is negligible in comparison to the main term and allows one to exploit the quantitative savings available elsewhere. The method can also be applied to analyse failures of the expected asymptotic formula and to the situation where $\mu$ ranges over a short interval. In contrast to some previous
approaches (see for example \cite{BCP:97}, \cite{P:02lfp}) in which the $\mu$ under consideration were assumed to satisfy an unnatural spacing condition, we make no such assumption and instead integrate
directly over the exceptional set.\par

A word is in order concerning the history of the present paper. An early preprint of this work existed in mid-2007, with exceptional sets containing power savings in a revision prepared following an Oberwolfach meeting a year later. This work had some influence on the paper on thin sequences \cite{BKW8} joint between Br\"udern, Kawada and the second author (see \cite{BKW8add} for subsequent developments). With an eye to the delays created by workload obstructions, we have taken the opportunity in this final version of incorporating the very recent developments pertaining to classical Weyl sums stemming from work of the second author on Vinogradov's mean value theorem (see \cite{W:12war}, \cite{W:12ec} and \cite{W:13ec2}).\par

\section{The kernel of the analysis}\label{sect:2}\label{kernel}
In this section we introduce the elements of the Davenport-Heilbronn method key to our discussion, as well as some basic estimates required during the course of our argument. When $P$ is a large positive number and $1\le Q\le P$ we write
\begin{equation}\label{2.1}
f(\alpha;Q,P)=\sum_{Q<x\le P}e(\alpha x^k).
\end{equation}
Given two suitable positive functions $S(P)$ and $T(P) \le S(P)$ tending to infinity, we define the major arc by
$$\fM=\{\alpha\in\R:|\alpha|\le S(P)P^{-k}\}.$$
We further write
$$\fm=\{\alpha\in\R:S(P)P^{-k}<|\alpha|\le T(P)\}\quad \mbox{and}\quad \ft=\{\alpha\in\R:|\alpha|>T(P)\}$$
for the minor and trivial arcs, and set $L(P)=\max\{1,\log T(P)\}$.  We apply a dissection of this same basic shape for all of our applications, although the specific form of the functions $S$, $T$, and $L$ may change (compare for instance the definitions in \cite[\S3 and \S8]{W:FAF}). For the moment, it suffices to note that this set-up ensures that the major, minor, and trivial arcs give the contributions normally expected in Freeman's version of the Davenport-Heilbronn method when the number of variables is sufficiently large.\par

We next recall the upper and lower bound kernels $K_\pm (\alpha)$ defined by Freeman \cite{Freem:af}. Write $\delta=\tau L(P)^{-1}$ and put
\begin{equation}\label{2.2}
K_\pm (\alpha)=\frac{\sin(\pi \delta \alpha)\sin (\pi (2\tau \pm \delta)\alpha)}{\pi^2\delta \alpha^2}.
\end{equation}
Also, write $U_a(t)$ for the indicator function of the interval $(-a,a)$. Then from \cite[Lemma 1]{Freem:af} and its proof, one finds that
\begin{equation}\label{2.3}
U_{\tau-\delta}(t)\le \int_{-\infty}^\infty e(\alpha t)K_-(\alpha)\d\alpha \le U_\tau (t)
\end{equation}
and
\begin{equation}\label{2.4}
U_{\tau}(t)\le \int_{-\infty}^\infty e(\alpha t)K_+(\alpha)\d\alpha \le U_{\tau+\delta}(t).
\end{equation}
Moreover, one has the bound
\begin{equation}\label{2.5}
K_\pm (\alpha) \ll \min\{\tau,|\alpha|^{-1},\delta^{-1}\alpha^{-2}\}.
\end{equation}

\par In order to facilitate analytic manipulations, it is convenient to work with positive kernels. On recalling (\ref{2.2}), we find that when necessary the kernel $K_\pm(\alpha)$ may be decomposed by means of the relation
\begin{equation}\label{2.6}
|K_\pm(\alpha)|^2=K_1(\alpha)K_2^{\pm}(\alpha),
\end{equation}
where
\begin{equation}\label{2.7}
K_1(\alpha)=\left(\frac{\sin (\pi \delta \alpha)}{\pi \delta \alpha}\right)^{\!\!2}\le \min\{1, (\pi \delta \alpha)^{-2}\}
\end{equation}
and
\begin{equation}\label{2.8}
K_2^{\pm}(\alpha)= \left(\frac{\sin (\pi(2\tau\pm \delta)\alpha)}{\pi \alpha}\right)^{\! \! 2}\le \min\{5\tau^2,(\pi \alpha)^{-2}\}.
\end{equation}
We note that by making a change of variable in \cite[Lemma 14.1]{Bak:DI}, one finds that the Fourier transform of the kernel $K_1$ satisfies
\begin{equation}\label{2.9}
\widehat{K_1}(t)=\int_{-\infty}^\infty e(\alpha t)K_1(\alpha)\d\alpha =\delta^{-1}\max\{0,1-\delta^{-1}|t|\}\le \delta^{-1} U_\delta (t).
\end{equation}

\par Next we introduce exponential integrals with which to encode the sets of real numbers that we seek to test for unrepresentations, or occasionally for representations. When $\cZ$ is any measurable subset of $\R$ and $\eta_\mu$ is any complex-valued function of $\mu$, put
\begin{equation}\label{2.10}
H_{\eta,\cZ}(\alpha)=\int_\cZ \eta_\mu e(-\alpha \mu)\d\mu.
\end{equation}
The following mean value estimates play a critical r\^ole in our arguments.

\begin{lemma}\label{lemma2.1} When $\cZ$ is a set of finite measure $Z$, and $|\eta_{\mu}|=1$ for $\mu \in \cZ$, then
$$\int_{-\infty}^\infty |H_{\eta,\cZ}(\alpha)|^2K_1(\alpha)\d\alpha \le 2Z.$$
\end{lemma}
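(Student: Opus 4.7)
The plan is to expand the square, interchange the order of integration via Fubini, and then use the positivity and $L^1$-bound of $\widehat{K_1}$ recorded in (\ref{2.9}).

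First I would write
\[
|H_{\eta,\cZ}(\alpha)|^2 = H_{\eta,\cZ}(\alpha)\overline{H_{\eta,\cZ}(\alpha)} = \int_\cZ \int_\cZ \eta_\mu\overline{\eta_\nu}\, e(-\alpha(\mu-\nu))\,\d\mu\,\d\nu.
\]
Multiplying by $K_1(\alpha)$ and integrating over $\alpha\in\R$, since $K_1 \ge 0$ and is integrable (from (\ref{2.7}), $K_1\in L^1\cap L^\infty$), Fubini is applicable, so that
\[
\int_{-\infty}^\infty |H_{\eta,\cZ}(\alpha)|^2 K_1(\alpha)\,\d\alpha = \int_\cZ\int_\cZ \eta_\mu\overline{\eta_\nu}\, \widehat{K_1}(\nu-\mu)\,\d\mu\,\d\nu,
\]
where we have used the evenness of $K_1$ to identify the inner $\alpha$-integral with $\widehat{K_1}(\nu-\mu)$.

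Next, I would exploit that the left-hand side is real and non-negative, together with $|\eta_\mu\overline{\eta_\nu}|=1$ and $\widehat{K_1}\ge 0$, to pass to the inequality
\[
\int_{-\infty}^\infty |H_{\eta,\cZ}(\alpha)|^2 K_1(\alpha)\,\d\alpha \le \int_\cZ\int_\cZ \widehat{K_1}(\nu-\mu)\,\d\mu\,\d\nu.
\]
Extending the inner $\nu$-integration from $\cZ$ to all of $\R$ (again legitimate as $\widehat{K_1}\ge 0$) and invoking the bound $\widehat{K_1}(t)\le \delta^{-1}U_\delta(t)$ from (\ref{2.9}), one has $\int_{-\infty}^\infty \widehat{K_1}(t)\,\d t \le \delta^{-1}\cdot 2\delta = 2$, and thus
\[
\int_\cZ\int_\cZ \widehat{K_1}(\nu-\mu)\,\d\nu\,\d\mu \le 2\,\text{meas}(\cZ) = 2Z.
\]

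There is no substantial obstacle here; the only point requiring minor care is the Fubini justification and the observation that the factor $2$ in the conclusion arises solely because the stated majorant $\delta^{-1}U_\delta(t)$ for $\widehat{K_1}$ overshoots by a factor of $2$ (the sharp constant would give $Z$, since $\int\widehat{K_1}=1$). Writing the constant as $2$ keeps the statement consistent with the inequality actually displayed in (\ref{2.9}).
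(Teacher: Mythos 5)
Your proof is correct and follows essentially the same route as the paper's: expand $|H|^2$, apply Fubini to write the integral as $\int_{\cZ^2}\eta_\mu\overline{\eta_\nu}\widehat{K_1}(\nu-\mu)\,\d\mu\,\d\nu$, drop the unimodular phases, and bound via $\widehat{K_1}(t)\le \delta^{-1}U_\delta(t)$ from (\ref{2.9}). The only cosmetic difference is that you bound $\int_{\R}\widehat{K_1}\le 2$ first and then integrate over $\cZ$, whereas the paper keeps the support restriction $|\mu-\nu|<\delta$ explicit; both yield $2Z$ in one line, and your closing remark that the sharp constant would be $1$ is a correct aside.
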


\begin{proof} In view of (\ref{2.7}) and (\ref{2.10}), it follows from Fubini's Theorem that the mean value in question may be written as
$$I=\int_{\cZ^2} \overline{\eta_{\mu}}{\eta_{\nu}}\int_{-\infty}^{\infty} e(\alpha(\mu-\nu))K_1(\alpha) \d\alpha \d\mu \d\nu = \int_{\cZ^2} \overline{\eta_{\mu}}{\eta_{\nu}} \widehat{K_1}(\mu-\nu) \d\mu \d\nu.$$
Moreover, the upper bound (\ref{2.9}) shows that for every fixed $\nu$ one has $\widehat{K_1}(\mu-\nu)=0$ unless $\nu-\delta < \mu < \nu+\delta$. Consequently,
$$I\le \int_{\cZ^2} \widehat{K_1}(\mu-\nu) \d\mu \d\nu  \le  \delta^{-1} \int_{\cZ}\int_{\nu-\delta}^{\nu+\delta}\d\mu \d\nu = 2Z,$$
and the proof of the lemma is complete.\end{proof}

When $c$ is a positive constant we write $f^{(c)}(\alpha)$ for the exponential sum $f(\alpha;cP,P)$ defined in (\ref{2.1}). The following lemma allows us to handle exceptional sets in short intervals and plays a key r\^ole in the proof of Theorem \ref{theorem1.7}.

\begin{lemma}\label{lemma2.2}
Let $N$ be a large real number, and write $P=N^{1/k}$. In addition, let $c$ and $\lambda$ be non-zero real numbers with $c>0$. Suppose that $M\le |\lambda|(k-1)(cP)^{k-1}$. Then whenever $\cZ$ is a subset of $[N,N+M]$ having measure $Z$, and $|\eta_\mu |=1$ for all $\mu \in \cZ$, one has
$$\int_{-\infty}^\infty |f^{(c)}(\lambda \alpha)H_{\eta,\cZ}(\alpha)|^2K_1(\alpha)\d\alpha \le 2PZ.$$
\end{lemma}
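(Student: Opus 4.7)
My plan is to expand the two squared factors in the integrand and invoke Fubini's theorem to exchange the $\alpha$-integration with the resulting sums and integrals. Writing out
$$|f^{(c)}(\lambda\alpha)|^2=\sum_{cP<x,y\le P}e(\lambda\alpha(x^k-y^k))$$
and using (\ref{2.10}) for $|H_{\eta,\cZ}(\alpha)|^2$, the inner $\alpha$-integral of $e(\alpha t)K_1(\alpha)$ is transformed into $\widehat{K_1}(t)$ via (\ref{2.9}). The crucial feature to exploit is that $\widehat{K_1}$ is compactly supported in $[-\delta,\delta]$.

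Next I would split the resulting double sum over integer pairs $x,y\in (cP,P]$ into a diagonal part $x=y$ and an off-diagonal part $x\ne y$. Using $|\eta_\mu|=1$ to discard phases, the diagonal contribution is dominated by
$$\sum_{cP<x\le P}\int_{\cZ^2}\widehat{K_1}(\nu-\mu)\d\mu\d\nu.$$
Repeating the argument from the proof of Lemma \ref{lemma2.1} bounds the inner integral by $2Z$, and since there are at most $P$ diagonal terms, this already yields a total contribution of at most $2PZ$, matching the claimed bound.

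The main obstacle is therefore to show that the off-diagonal contribution is \emph{precisely zero}, and this is where the hypothesis on $M$ enters in an essential way. For distinct integers $x,y\in (cP,P]$, the factorization $x^k-y^k=(x-y)(x^{k-1}+x^{k-2}y+\dots+y^{k-1})$ combined with $|x-y|\ge 1$ and the fact that each of the $k$ summands strictly exceeds $(cP)^{k-1}$ gives $|x^k-y^k|>k(cP)^{k-1}$. Meanwhile, for $\mu,\nu\in [N,N+M]$, the hypothesis yields $|\mu-\nu|\le M\le |\lambda|(k-1)(cP)^{k-1}$. Combining these,
$$|\lambda(x^k-y^k)-(\mu-\nu)|>|\lambda|k(cP)^{k-1}-|\lambda|(k-1)(cP)^{k-1}=|\lambda|(cP)^{k-1},$$
which exceeds $\delta=\tau L(P)^{-1}$ once $N$ is sufficiently large. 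Consequently $\widehat{K_1}$ vanishes at every such argument and the off-diagonal sum contributes nothing, completing the argument. The delicate point is that the hypothesis on $M$ is exactly tuned to this cancellation: any relaxation would open the door to nonzero off-diagonal contributions and force the use of a weaker bound.
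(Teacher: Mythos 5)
Your proof is correct and follows essentially the same route as the paper's: both arguments reduce to the compactly supported Fourier transform $\widehat{K_1}$, use the hypothesis on $M$ together with the factorization of $x^k-y^k$ to show that the off-diagonal ($x\ne y$) terms contribute nothing, and then bound the diagonal contribution by $2PZ$ via the argument of Lemma \ref{lemma2.1}. The only cosmetic difference is that you split the sum over $x,y$ before integrating over $\mu,\nu$, whereas the paper packages the $x,y$-sum as a counting function $\cW(\mu,\nu)$ inside the $\mu,\nu$-integral.
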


\begin{proof} As in the proof of Lemma \ref{lemma2.1}, the mean value under consideration may be written as
$$I=\int_{\cZ^2}{\overline \eta}_\mu \eta_\nu \int_{-\infty}^\infty|f^{(c)}(\lambda\alpha)|^2e(\alpha(\mu-\nu))K_1(\alpha)\d\alpha \d\mu \d\nu.$$
In view of the relation (\ref{2.9}), one has
\begin{equation}\label{2.11}
I\le \int_{\cZ^2}\delta^{-1}\cW(\mu,\nu)\d\mu\d\nu,
\end{equation}
where $\cW(\mu,\nu)$ denotes the number of solutions of the inequality
$$|\lambda(x^k-y^k)+\mu-\nu|<\delta,$$
with $cP<x,y\le P$. Our hypothesis concerning the size of $M$ ensures that whenever $\mu,\nu \in \cZ$, one has
$$|\mu-\nu|\le M\le |\lambda|(k-1)(cP)^{k-1}.$$
Then if $cP<x,y\le P$ and $x\neq y$, one has
\begin{align*}
|\lambda(x^k-y^k)|&=|\lambda(x-y)(x^{k-1}+x^{k-2}y+\dots +y^{k-1})|>|\lambda|k(cP)^{k-1}\\
&>|\lambda|(k-1)(cP)^{k-1}+\delta\ge |\mu -\nu|+\delta.
\end{align*}
We therefore deduce that $x=y$ for every solution counted by $\cW(\mu,\nu)$, and hence that $\cW(\mu,\nu) \le P$. Moreover, for every fixed $\nu$ one has $\cW(\mu,\nu)=0$ unless $\nu-\delta<\mu<\nu+\delta$. Thus we infer from (\ref{2.11}) that
$$I\le \delta^{-1}P\int_{\cZ}\int_{\nu-\delta}^{\nu+\delta}  \d\mu \d\nu = 2PZ,$$
and the desired conclusion follows.
\end{proof}

We next record a general principle that allows us to extend mean value estimates over subsets of the unit interval to corresponding subsets of the whole real line in the presence of a suitably decaying kernel function.

\begin{lemma}\label{lemma2.3}
Let $X$ and $t$ be non-negative real numbers, and let $\lambda\in \R$. Also, when $\cA \subseteq [1,P] \cap \Z$ and $\fB \subseteq [0,1)$ is measurable, let
$$h(\alpha) =\sum_{x \in \cA}e(\alpha x^k)\quad \mbox{and}\quad \fB_{\lambda}(X)=\{\alpha\in\R:\text{$|\alpha|\ge X$ and $\lambda \alpha \in \fB \mmod{1}$}\}.$$
Then for any real-valued function $K$ satisfying $|K(\alpha)|\ll \min\{1,\alpha^{-2}\}$, one has
$$\int_{\fB_{\lambda}(X)}|h(\lambda \alpha)|^tK(\alpha)\d\alpha \ll_\lambda (1+X)^{-1}\int_\fB |h(\alpha)|^t\d\alpha.$$
\end{lemma}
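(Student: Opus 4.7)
\medskip

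The plan is to exploit two features of the integrand: the function $h$ is $1$-periodic (since $h(\alpha)=\sum_{x\in\cA}e(\alpha x^k)$ and $x^k\in\Z$), while the kernel $K$ enjoys $|K(\alpha)|\ll\min\{1,\alpha^{-2}\}$. We may assume $\lambda>0$, since replacing $\lambda$ by $-\lambda$ changes nothing: $|h(\lambda\alpha)|^t$ is invariant under $\alpha\mapsto-\alpha$ (because $h(-\alpha)=\overline{h(\alpha)}$), and this substitution identifies $\fB_{-\lambda}(X)$ with $\fB_{\lambda}(X)$.

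The main step is a change of variables that collapses the integration on $\R$ to an integral over $\fB\subseteq[0,1)$. For each $n\in\Z$, set $\beta=\lambda\alpha-n$, so that $\d\alpha=\lambda^{-1}\d\beta$. Restricting $\beta$ to lie in $[0,1)$ forces $\alpha\in[n/\lambda,(n+1)/\lambda)$, and the condition $\lambda\alpha\equiv\beta\pmod 1\in\fB$ reduces to $\beta\in\fB$. Using the $1$-periodicity $h(\beta+n)=h(\beta)$ and partitioning $\fB_{\lambda}(X)$ according to the value of $n$, one obtains
\begin{equation*}
\int_{\fB_{\lambda}(X)}|h(\lambda\alpha)|^t K(\alpha)\d\alpha
=\lambda^{-1}\int_{\fB}|h(\beta)|^t\Bigl(\sum_{n\in\Z \atop |(\beta+n)/\lambda|\ge X}K\!\bigl((\beta+n)/\lambda\bigr)\Bigr)\d\beta,
\end{equation*}
after an interchange of sum and integral justified by absolute convergence (furnished by the $\alpha^{-2}$ decay of $K$).

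The remaining task is to bound the inner sum uniformly in $\beta\in[0,1)$. The points $\alpha_n=(\beta+n)/\lambda$ form an arithmetic progression of common difference $\lambda^{-1}$. Using $|K(\alpha_n)|\ll\min\{1,\alpha_n^{-2}\}$, a standard comparison with the corresponding integral yields, for any real $Y\ge 0$,
\begin{equation*}
\sum_{n:\,|\alpha_n|\ge Y}\min\{1,\alpha_n^{-2}\}\ll_\lambda (1+Y)^{-1},
\end{equation*}
since the tail $\int_{|\alpha|\ge \max(Y,1)}\alpha^{-2}\d\alpha\ll \max(Y,1)^{-1}$ and the contribution from $|\alpha|\le 1$ is $O(1)$ in any event. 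Applying this with $Y=X$ gives the stated bound
\begin{equation*}
\int_{\fB_{\lambda}(X)}|h(\lambda\alpha)|^t K(\alpha)\d\alpha\ll_\lambda (1+X)^{-1}\int_{\fB}|h(\beta)|^t\d\beta.
\end{equation*}

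The only conceptual subtlety is handling the regime $X<1$ alongside $X\gg 1$ in a uniform fashion, but this is absorbed cleanly into the $(1+X)^{-1}$ factor. Everything else is bookkeeping: the periodicity of $h$, a linear change of variables, and the tail estimate for a discrete sum of a decaying kernel.
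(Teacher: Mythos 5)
Your proof is correct and follows essentially the same strategy as the paper: split into unit intervals, exploit the $1$-periodicity of $h$, and use the decay $|K(\alpha)|\ll\min\{1,\alpha^{-2}\}$ to produce the $(1+X)^{-1}$ factor. The only cosmetic difference is that you partition according to $\lfloor\lambda\alpha\rfloor$ (so each piece maps exactly onto $\fB$ after substituting $\beta=\lambda\alpha-n$, and the kernel is then summed over the arithmetic progression $(\beta+n)/\lambda$), whereas the paper partitions according to $\lfloor\alpha\rfloor$ (so the kernel bound on each piece is immediate, at the cost of landing in a union of at most $1+|\lambda|$ translates of $\fB$); both are sound bookkeeping choices.
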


\begin{proof} After splitting into intervals of the form $\fB_{\lambda}(X) \cap [n,n+1)$ and then making a change of variable, we obtain
$$\int_{\fB_{\lambda}(X)}|h(\lambda \alpha)|^tK(\alpha)\d\alpha \ll |\lambda|^{-1}\sum_{n\ge X}(1+n)^{-2}\int_{\fC_{\lambda}(n)}|h(\alpha)|^t\d\alpha,$$
where $\fC_{\lambda}(n)$ denotes the set of $\alpha \in \fB\mmod{1}$ with $\lambda n\le \alpha< \lambda (n+1)$. Since $\fC_\lambda (n)$ is contained in a union of at most $1+|\lambda|$ translates of $\fB$ of the shape $\fB + j$, the conclusion of the lemma follows from the periodicity of $h(\alpha)$ modulo 1.
\end{proof}

\section{Failures of the asymptotic formula}\label{sect:3}
We illustrate our methods first with the proof of Theorem \ref{theorem1.4}, which concerns the frequency with which the anticipated asymptotic formula (\ref{1.6}) fails for the function $\cN_{s,k}^\tau (P;\L,\mu)$ counting the number of integral solutions of the inequality (\ref{1.1}) with $\x\in [1,P]^s$. Observe that there is no loss of generality in supposing throughout that $\lambda_1/\lambda_2 \not \in \Q$. Suppose that $(s_1,\sigma_1,U)$ is an accessible triple for $k$, and put $S(P)=\min\{U(P),(2k)^{-1}P\}$. We begin by recording a minor arc estimate. Recall the definition (\ref{2.1}) and write $f_i(\alpha)=f(\lambda_i\alpha;0, P)$.

\begin{lemma}\label{lemma3.1} There exists a choice for the function $T(P)$, depending only on $\lambda_1$, $\lambda_2$ and $S(P)$, with the property that
$$\sup_{\alpha \in \fm}|f_1(\alpha)f_2(\alpha)| \ll P^2 T(P)^{-2^{-k-1}}.$$
\end{lemma}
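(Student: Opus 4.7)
The strategy is proof by contradiction, combined with an implicit definition of $T(P)$. Suppose that $|f_1(\alpha)f_2(\alpha)|>P^2T(P)^{-2^{-k-1}}$ at some $\alpha\in\fm$. Since $|f_i(\alpha)|\le P$ trivially, both factors satisfy $|f_i(\alpha)|>PT(P)^{-2^{-k-1}}$ for $i=1,2$. I would invoke Dirichlet's theorem for each of $\lambda_1\alpha$ and $\lambda_2\alpha$ with modulus $Q=\tfrac12 P^kT(P)^{-1/4-\ep}$, obtaining coprime $(a_i,q_i)$ with $q_i\le Q$ and $|q_i\lambda_i\alpha-a_i|\le Q^{-1}$, and feed this into Weyl's inequality in the form
$$|f_i(\alpha)|\ll P^{1+\ep}(q_i^{-1}+P^{-1}+q_iP^{-k})^{2^{1-k}}.$$
Raising to the $2^{k-1}$-th power, the assumed lower bound on $|f_i(\alpha)|$ forces $q_i^{-1}+P^{-1}+q_iP^{-k}\gg T(P)^{-1/4-\ep}$; the choice of $Q$ bounds the third term by $T(P)^{-1/4-\ep}/2$, and $P^{-1}$ is negligible (since $T(P)\le S(P)\le P$), so $q_i\ll T(P)^{1/4+\ep}$ and $|q_i\lambda_i\alpha-a_i|\ll T(P)^{1/4+\ep}/P^k$.

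Next, I would rule out $a_i=0$, which would give $|\alpha|\ll T(P)^{1/4+\ep}/(|\lambda_i|P^k)$ and contradict $|\alpha|>S(P)/P^k$ once one imposes $T(P)\le S(P)$. With $a_i\ne 0$ and the trivial bound $|a_i|\ll_\L T(P)^{5/4+\ep}$ (from $|\alpha|\le T(P)$), the resultant-type identity
$$q_2\lambda_2(q_1\lambda_1\alpha-a_1)-q_1\lambda_1(q_2\lambda_2\alpha-a_2)=q_1\lambda_1a_2-q_2\lambda_2a_1$$
yields $|q_1\lambda_1a_2-q_2\lambda_2a_1|\ll_\L T(P)^{1/2+\ep}/P^k$, which rearranges to
$$\left|\frac{\lambda_1}{\lambda_2}-\frac{q_2a_1}{q_1a_2}\right|\ll_\L\frac{T(P)^{1/2+\ep}}{|q_1a_2|P^k},\qquad |q_1a_2|\ll_\L T(P)^{3/2+\ep}.$$

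To close the argument, I introduce the positive function $\phi(Q^*)=\inf\{q|\lambda_1/\lambda_2-a/q|:1\le q\le Q^*,\,(a,q)=1\}$, which is strictly positive for every $Q^*$ precisely because $\lambda_1/\lambda_2\notin\Q$. The previous display then reads $P^k\ll_\L T(P)^{1/2+\ep}/\phi(CT(P)^{3/2+\ep})$, for a constant $C=C(\L)$. I would take $T(P)$ to be the supremum of $T\in[1,S(P)]$ for which the auxiliary inequality $F(T)\le P^k$ holds, where $F(T)=C'T^{1/2+\ep}/\phi(CT^{3/2+\ep})$ for a suitable $C'=C'(\L)$; any $\alpha$ violating the lemma then violates this defining inequality, producing the desired contradiction. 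Since $\phi(Q^*)\le 1/Q^*$ by Dirichlet, one has $F(T)\ge (C'/C)T^{2+2\ep}$, so $F$ is non-decreasing and unbounded, forcing $T(P)\to\infty$ with dependence only on $\L$ and $S(P)$, as required. The main obstacle is purely qualitative: the rate at which $T(P)\to\infty$ is governed by the decay of $\phi$, i.e.\ by the continued fraction expansion of $\lambda_1/\lambda_2$, and is therefore uncontrolled in general; fortunately the statement demands only existence, which the above construction secures.
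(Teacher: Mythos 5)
Your overall strategy --- Dirichlet plus Weyl on $\lambda_1\alpha$ and $\lambda_2\alpha$, the resultant identity to pass to $\lambda_1/\lambda_2$, and the implicit definition of $T(P)$ via the strictly positive function $\phi$ --- is exactly the Bentkus--G\"otze--Freeman device that lies behind the paper's one-line citation to \cite[Lemma 2.3]{W:FAF}, and your closing observation that only the \emph{existence} of some $T(P)\to\infty$ is needed is the right lens. However, there is a genuine gap in the $\ep$-bookkeeping, and it is not cosmetic. Raising $|f_i(\alpha)|>PT(P)^{-2^{-k-1}}$ to the $2^{k-1}$-th power against Weyl's bound $|f_i(\alpha)|\ll P^{1+\ep}(q_i^{-1}+P^{-1}+q_iP^{-k})^{2^{1-k}}$ yields
$$q_i^{-1}+P^{-1}+q_iP^{-k}\gg T(P)^{-1/4}P^{-2^{k-1}\ep},$$
and \emph{not} $T(P)^{-1/4-\ep}$: since $T(P)\le S(P)\le(2k)^{-1}P$ one has $T(P)^{-\ep}>P^{-2^{k-1}\ep}$, so the bound you wrote is strictly \emph{stronger} than what Weyl supplies and cannot be deduced from it. Carrying the correct factors through gives $q_i\ll T^{1/4}P^{\ep'}$, $|a_i|\ll_\L T^{5/4}P^{\ep'}$, and finally $\phi(C_\L T^{3/2}P^{2\ep'})\ll_\L T^{1/2}P^{2\ep'-k}$. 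Thus the function you wish to build $T(P)$ from is really $F(T,P)=C'T^{1/2}P^{2\ep'}/\phi(CT^{3/2}P^{2\ep'})$, with a $P$-dependence \emph{inside} the argument of $\phi$. Your proof that $T(P)\to\infty$ rested entirely on $F$ being $P$-free --- ``$F(T_0)$ is a constant, so $P^k$ eventually beats it'' --- and that step collapses here: if $\lambda_1/\lambda_2$ has infinite irrationality measure then $1/\phi(Q^*)$ outgrows every polynomial in $Q^*$, so $F(T_0,P)$ can exceed $P^k$ along a subsequence of $P$, and the construction need not produce $T(P)\to\infty$. The same spurious factor also defeats your exclusion of $a_i=0$: you would need $S(P)\ll_\L T(P)^{1/4+\ep}$ to be impossible, but the honest bound $S(P)\ll_\L T(P)^{1/4}P^{\ep'}$ is no longer a contradiction with $T\le S$.

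The repair is to replace Weyl's inequality by a large-values estimate with no $P^\ep$ loss in the denominator bound: for each $\delta>0$ there are $C(\delta,k)$ and $P_0(\delta,k)$, independent of $P$, such that if $P\ge P_0$ and $|\sum_{1\le x\le P}e(\beta x^k)|\ge\delta P$, then there exist coprime $a,q$ with $1\le q\le C(\delta)$ and $|q\beta-a|\le C(\delta)P^{-k}$. (Dirichlet and Weyl first confine $q$ to $O_\delta(P^{\ep'})$; one then feeds this into the major-arc approximation of \cite[Theorem 4.1]{V:HL}, whose error is then $o(P)$, to force $q\ll\delta^{-k}$ and $|q\beta-a|\ll_\delta P^{-k}$.) Applied with $\delta=T^{-2^{-k-1}}$, this puts the argument of $\phi$ at $O_\L(C_1(T)^2T)$, now genuinely $P$-free, after which your implicit definition of $T(P)$ --- supplemented by the constraint $C_1(T(P))<\min_i|\lambda_i|\,S(P)$ to rule out $a_i=0$ --- works for every irrational $\lambda_1/\lambda_2$, including Liouville ratios.
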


\begin{proof} In view of our definition of the minor arcs $\fm$, the desired conclusion is immediate from \cite[Lemma 2.3]{W:FAF}.
\end{proof}

It follows from the conclusion of this lemma that for all $\alpha\in \fm$, one has
\begin{equation}\label{3.1}
f_j(\alpha)\ll PT(P)^{-6^{-k}}
\end{equation}
for at least one suffix $j\in\{1,2\}$. We define $\fq_j$ to be the set of real numbers $\alpha \in \fm$ for which the upper bound (\ref{3.1}) holds, and then put $\fp_j=\fq_j\cup \ft$, so that $\fm\cup \ft\subseteq \fp_1\cup \fp_2$. In addition, when $1\le i\le s$, we write $\fN_i$ for the set of real numbers $\alpha \in \fm\cup \ft$ for which $\lambda_i\alpha\in \fN$, and $\fn_i=(\fm\cup\ft)\setminus \fN_i$. In order to assist our discussion, we define
$$\upsilon_{ij}=\begin{cases}1,&\text{when $i=j$ and $j\in\{1,2\}$,}\\
0,&\text{otherwise.}\end{cases}$$
Finally, when $t>0$, the set $\fB$ is measurable, and $K$ is integrable, we write
\begin{equation}\label{3.2}
\cM_{i,t}(\fB;K)=\int_\fB |f_i(\alpha)|^tK(\alpha)\d\alpha \quad (1\le i\le s).
\end{equation}

\par We first establish an estimate for the mean value introduced in (\ref{3.2}) of modified major arc type.

\begin{lemma}\label{lemma3.2}
Suppose that $k\ge 3$, $t>k+1$, $1\le i\le s$ and $j\in\{1,2\}$. Then for any fixed $\kappa>0$, one has
$$\cM_{i,t}(\fN_i\cap \fp_j;K_2^\pm)\ll_t P^{t-k}L(P)^{-\kappa s\upsilon_{ij}}$$
and
$$\cM_{i,t}(\fN_i\cap \fp_j;|K_\pm|)\ll_t P^{t-k}L(P)^{1-\kappa s\upsilon_{ij}}.$$
\end{lemma}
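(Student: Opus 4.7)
Plan: The proof would combine a classical $L^t$-bound on the major arcs $\fN$, lifted to $\R$ via Lemma~\ref{lemma2.3}, with the pointwise savings supplied by the definition of $\fq_j$ in the case $i=j\in\{1,2\}$.

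The baseline estimate is that, for $t>k+1$, a routine major-arc computation of the type in \cite{V:HL} yields $\int_{\fN\cap[0,1)}|f(\beta)|^t\,d\beta\ll P^{t-k}$. Since $K_2^\pm(\alpha)\ll\min\{1,\alpha^{-2}\}$, while $|K_\pm(\alpha)|\ll L(P)\min\{1,\alpha^{-2}\}$ (using $\tau\le 1$ and $\delta^{-1}\ll L(P)$ in (\ref{2.7}) and (\ref{2.8})), an application of Lemma~\ref{lemma2.3} with $h=f$, $\lambda=\lambda_i$ and $X=0$ gives
\[ \cM_{i,t}(\fN_i;K_2^\pm)\ll P^{t-k}, \qquad \cM_{i,t}(\fN_i;|K_\pm|)\ll P^{t-k}L(P). \]
Since $\fN_i\cap\fp_j\subseteq\fN_i$, this already proves the lemma when $\upsilon_{ij}=0$.

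The case $i=j\in\{1,2\}$ is where the factor $L(P)^{-\kappa s}$ must be produced. I would decompose $\fp_j=\fq_j\cup\ft$. The contribution from $\fN_i\cap\ft$ is handled by applying Lemma~\ref{lemma2.3} a second time, now with $X=T(P)$; this gains a factor $T(P)^{-1}$, which beats any fixed power of $L(P)^{-1}$. For the surviving piece $\fN_i\cap\fq_j$, I would exploit the defining pointwise bound $|f_j(\alpha)|\ll PT(P)^{-6^{-k}}$ of $\fq_j$. Fixing $a$ with $0<a<t-k-1$ (possible because $t>k+1$) and factoring $|f_i|^t=|f_i|^{a}|f_i|^{t-a}$, the first factor is bounded pointwise, while the baseline estimate is applied to the second, whose exponent $t-a$ still exceeds $k+1$. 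This yields
\[ \int_{\fN_i\cap\fq_j}|f_i|^t K\,d\alpha \ll T(P)^{-a\cdot 6^{-k}}P^{t-k}L(P)^{\ast}, \]
with $\ast=0$ for $K=K_2^\pm$ and $\ast=1$ for $K=|K_\pm|$.

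To finish, for any fixed $\kappa>0$ the factor $T(P)^{-a\cdot 6^{-k}}$ decays exponentially in $L(P)=\log T(P)$, and is therefore $O(L(P)^{-\kappa s})$ once $P$ is sufficiently large, with an implicit constant depending on $\kappa$, $s$, $t$ and $k$. Both claimed bounds follow. I expect the main obstacle to lie in the bookkeeping for the kernel $|K_\pm|$, whose intermediate $|\alpha|^{-1}$ decay range is what forces the extra factor of $L(P)$ missing from the $K_2^\pm$ case; a secondary delicacy is that when $t$ is only marginally above $k+1$ one is forced to take $a$ correspondingly small, placing the whole logarithmic saving on the exponential decay of $T(P)^{-a\cdot 6^{-k}}$.
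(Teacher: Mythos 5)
Your proposal follows essentially the same route as the paper: a classical major-arc mean value $\int_{\fN\cap[0,1)}|f|^u\,d\alpha\ll P^{u-k}$ for some $u\in(k+1,t)$, Lemma~\ref{lemma2.3} (with $X=0$ for $\fq_j$ and $X=T(P)$ for $\ft$) to lift it to the real line, the pointwise bound (\ref{3.1}) over $\fq_j$ supplying the $T(P)^{-\text{const}}\ll L(P)^{-\kappa s}$ gain when $i=j$, and the observation that $K_2^\pm\ll\min\{1,\alpha^{-2}\}$ while $|K_\pm|\ll L(P)\min\{1,\alpha^{-2}\}$ to account for the extra $L(P)$ in the second bound. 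The paper just chooses the specific split $a=\gamma=(t-k-1)/2$ and carries the $\upsilon_{ij}$ in the exponent $T(P)^{-\upsilon_{ij}6^{-k}}$ instead of separating the $\upsilon_{ij}=0$ case, but the substance is identical.
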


\begin{proof} We prove first that when $t>k+1$ and $K(\alpha) \ll \min\{1, \alpha^{-2}\}$, one has
\begin{equation}\label{3.3}
\cM_{i,t}(\fN_i\cap \fp_j;K)\ll_t P^{t-k}L(P)^{-\kappa s\upsilon_{ij}}.
\end{equation}
We may suppose that $t=k+1+2\gamma$, where $\gamma>0$. We put $u=k+1+\gamma$. Then from the proof of Theorem 4.4 and Lemma 4.9 of \cite{V:HL}, on using (4.13) of the latter in place of (4.14), we find that
$$\int_{\fN_i\cap [0,1)}|f_i(\alpha)|^u\d\alpha \ll P^{u-k}.$$
Then by applying (\ref{3.1}) in combination with Lemma \ref{lemma2.3}, we have on the one hand
\begin{align*}
\int_{\fN_i\cap \fq_j}|f_i(\alpha)|^t|K(\alpha)|\d\alpha &\ll \Bigl(\sup_{\alpha\in\fq_j}|f_i(\alpha)|\Bigr)^\gamma \int_{\fN_i\cap [0,1)}|f_i(\alpha)|^u\d\alpha \\
&\ll \left(PT(P)^{-\upsilon_{ij}6^{-k}}\right)^\gamma P^{u-k}\ll P^{t-k}L(P)^{-\kappa s\upsilon_{ij}},
\end{align*}
whilst on the other
\begin{align*}
\int_{\fN_i\cap \ft}|f_i(\alpha)|^t|K(\alpha)|\d\alpha &\ll f(0)^\gamma T(P)^{-1}\int_{\fN_i\cap [0,1)}|f_i(\alpha)|^u\d\alpha \\
&\ll (P^\gamma L(P)^{-\kappa s})P^{u-k}=P^{t-k}L(P)^{-\kappa s}.
\end{align*}
Our claimed bound (\ref{3.3}) follows from (\ref{3.2}) by combining these two upper bounds.\par

The respective conclusions of the lemma follow from (\ref{3.3}) on noting first from (\ref{2.8}) that $K_2^\pm (\alpha)\ll \min\{1,\alpha^{-2}\}$, and second from (\ref{2.5}) that
$$K_\pm(\alpha)\ll \delta^{-1}\min\{1,\alpha^{-2}\}\ll L(P)\min\{1,\alpha^{-2}\}.$$
\end{proof}

We turn next to a corresponding estimate of minor arc flavour.

\begin{lemma}\label{lemma3.3}
Suppose that $(s_1,\sigma_1,U)$ is an accessible triple for $k$, and that $t$ and $\sigma$ are real numbers with $t\ge s_1$ and $\sigma<\sigma_1$. Then for $1\le i\le s$, one has
$$\cM_{i,t}(\fn_i;K_2^\pm)\ll P^{t-k-(t-s_1)\sigma}U(P)^{-1}.$$
\end{lemma}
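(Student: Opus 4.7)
The plan is to execute a standard ``sup $\times L^{s_1}$'' minor-arc split, the key twist being the use of Lemma \ref{lemma2.3} to transfer the line integral against the decaying kernel $K_2^\pm$ back to an integral over $\fn\cap [0,1)$ so that the hypothesised accessible-triple bound (\ref{1.8}) can be applied.

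First I would bound $|f_i(\alpha)|$ uniformly on $\fn_i$. By construction of $\fn_i$, we have $\alpha \in \fm\cup\ft$ with $\lambda_i\alpha \not\in \fN$, so $\lambda_i\alpha \bmod 1 \in \fn\cap [0,1)$; the Weyl-type estimate in (\ref{1.8}) therefore gives $|f_i(\alpha)| = |f(\lambda_i\alpha)| \ll P^{1-\sigma_1+\ep}$. Since $\sigma<\sigma_1$, choosing $\ep \le \sigma_1-\sigma$ yields the uniform bound $|f_i(\alpha)|\ll P^{1-\sigma}$ throughout $\fn_i$.

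Next I would extract $t-s_1$ copies of this pointwise estimate from the integrand in (\ref{3.2}). Writing $|f_i(\alpha)|^t = |f_i(\alpha)|^{t-s_1}|f_i(\alpha)|^{s_1}$, this yields
\begin{equation*}
\cM_{i,t}(\fn_i;K_2^\pm) \ll P^{(t-s_1)(1-\sigma)} \int_{\fn_i}|f_i(\alpha)|^{s_1}K_2^\pm(\alpha)\d\alpha.
\end{equation*}
It remains to control the residual mean value of order $s_1$. Since $\tau$ is fixed, (\ref{2.8}) gives $K_2^\pm(\alpha)\ll \min\{1,\alpha^{-2}\}$, so I can invoke Lemma \ref{lemma2.3} with $h=f$, $\cA=[1,P]\cap\Z$, $t=s_1$, $X=0$, and $\fB=\fn\cap [0,1)$; since $\fn_i\subseteq \fB_{\lambda_i}(0)$, this yields
\begin{equation*}
\int_{\fn_i}|f_i(\alpha)|^{s_1}K_2^\pm(\alpha)\d\alpha \ll \int_{\fn\cap[0,1)}|f(\alpha)|^{s_1}\d\alpha \ll P^{s_1-k}U(P)^{-1},
\end{equation*}
where the final bound is the mean-value half of the accessible-triple hypothesis (\ref{1.8}). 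Combining the two displays produces the exponent
$(t-s_1)(1-\sigma) + s_1 - k = t-k-(t-s_1)\sigma$, which is exactly the claimed bound.

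The argument is essentially routine once Lemma \ref{lemma2.3} is in place, so I do not anticipate a serious obstacle; the only point that requires a moment of care is verifying that $\fn_i$ is genuinely contained in the set $\fB_{\lambda_i}(0)$ of Lemma \ref{lemma2.3}, which follows directly from the definitions of $\fN_i$ and $\fn_i$ together with the $1$-periodicity of $\fN$. The method does not distinguish between $K_2^+$ and $K_2^-$ because both obey the same envelope in (\ref{2.8}), so a single argument handles both cases simultaneously.
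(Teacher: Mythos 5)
Your proof is correct and follows exactly the paper's intended route: the paper's one-line proof ("On noting (2.8), the desired estimate follows by inserting the estimates stemming from (1.8) into the conclusion of Lemma 2.3") is precisely the combination you spell out, namely the kernel envelope from (2.8), the sup and mean-value halves of the accessible-triple hypothesis (1.8), and the transfer to $[0,1)$ furnished by Lemma 2.3. Your expanded exposition, including the check that $\fn_i\subseteq\fB_{\lambda_i}(0)$ and the choice $\ep\le\sigma_1-\sigma$, is an accurate unpacking of what the paper leaves implicit.
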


\begin{proof} On noting (\ref{2.8}), the desired estimate follows by inserting the estimates stemming from (\ref{1.8}) into the conclusion of Lemma \ref{lemma2.3}.\end{proof}

We now embark upon the proof of Theorem \ref{theorem1.4}. Adopt the hypotheses of the statement of the latter theorem, take $N$ to be a positive number sufficiently large in terms of $s$, $k$, $\L$ and $\tau$, and put $P=N^{1/k}$. When $\mu\in (N/2,N]$, we define
$$R_{\pm}(\mu)=\int_{-\infty}^\infty \ftil(\alpha)e(-\alpha \mu)K_\pm(\alpha)\d\alpha ,$$
in which we have written $\ftil(\alpha)=f_1(\alpha)f_2(\alpha)\cdots f_s(\alpha)$. Then it follows from (\ref{2.3}) and (\ref{2.4}) that whenever $\mu\in (N/2,N]$, one has
\begin{equation}\label{3.4}
R_-(\mu)\le \cN_{s,k}^\tau(N^{1/k};\L,\mu)\le R_+(\mu).
\end{equation}
In view of our assumption that $\lambda_i>2$ for some index $i$, the argument leading to \cite[Lemma 6.1]{W:FAF} shows that whenever $\mu \in (N/2,N]$, then
$$\int_{\fM}\ftil(\alpha)e(-\alpha \mu)K_{\pm}(\alpha)\d\alpha = 2\tau\Omega_{s,k}(\L,\mu/N)P^{s-k}+O(P^{s-k}L(P)^{-1}).$$
Here we have made use of the implicit hypothesis that $s\ge k+2$. It follows that whenever $\psi(N)$ grows slowly enough in terms of $s$, $\sigma$, $k$, $\L$ and $\tau$, and in particular, sufficiently slowly in terms of $L(P)$, then
\begin{equation}\label{3.5}
\left|\int_\fM \ftil(\alpha)e(-\alpha \mu)K_\pm(\alpha)\d\alpha-2\tau \Omega_{s,k}(\L,\mu/N)N^{s/k-1}\right|<\tfrac{1}{2}N^{s/k-1}\psi(N)^{-1}.
\end{equation}

Next, write $\cZ=\ctZ_{s,k}^\tau (N;\psi;\L)$, put $Z=\text{meas}(\cZ)$, and consider an element $\mu$ of $\cZ$. Since $\R$ is the disjoint union of $\fM$, $\fm$ and $\ft$, a comparison of (\ref{1.7}) and (\ref{3.5}) leads from (\ref{3.4}) to the conclusion that with $K_\mu=K_{+}$ or $K_\mu=K_{-}$, one has
\begin{equation}\label{3.6}
\left|\int_{\fm\cup \ft}\ftil(\alpha)e(-\alpha \mu)K_\mu (\alpha)\d\alpha\right|\ge \tfrac{1}{2}P^{s-k}\psi(N)^{-1}.
\end{equation}
Using $*$ to denote either $+$ or $-$, denote by $\cZ^*$ the set of $\mu \in \cZ$ for which (\ref{3.6}) holds with $K_{\mu}=K_{*}$, and write $Z^*=\text{meas}(\cZ^*)$. Then it follows that $Z\le Z^++Z^-$, so that for some choice of $*$, either $+$ or $-$, one has $Z\le 2Z^*$. We fix this choice henceforth. For each $\mu \in \cZ^*$, we determine the complex number $\eta_\mu$ by means of the relation
$$\left|\int_{\fm\cup \ft}\ftil(\alpha)e(-\alpha \mu)K_*(\alpha)\d\alpha\right|=\eta_\mu \int_{\fm\cup \ft} \ftil(\alpha)e(-\alpha \mu)K_*(\alpha)\d\alpha.$$
Recall the definition (\ref{2.10}), write $H(\alpha)=H_{\eta,\cZ^*}(\alpha)$, and note that $|\eta_\mu|=1$ for each $\mu\in \cZ^*$. Then by integrating the relation (\ref{3.6}) over the set $\cZ^*$, we find that
$$\int_{\fm \cup \ft}\ftil(\alpha)H(\alpha)K_*(\alpha)\d\alpha\ge \tfrac{1}{2}P^{s-k}\psi(N)^{-1}\int_{\cZ^*}\d\mu.$$
For the sake of convenience, when $\fB$ is measurable, we write
\begin{equation}\label{3.7}
I(\fB)=\int_\fB|\ftil(\alpha)H(\alpha)K_*(\alpha)|\d\alpha.
\end{equation}
In this notation, the last lower bound implies that
\begin{equation}\label{3.8}
I(\fp_1)+I(\fp_2)\ge \tfrac{1}{4}P^{s-k}\psi(N)^{-1}Z.
\end{equation}

\par In the final phase of our proof we obtain estimates for the integrals $I(\fp_j)$, thereby converting the lower bound (\ref{3.8}) into an upper bound for $Z$. Let $j$ be either $1$ or $2$. Then an application of H\"older's inequality leads from (\ref{3.7}) to the bound
\begin{equation}\label{3.9}
I(\fp_j)\le \prod_{i=1}^s\Bigl( \int_{\fp_j}|f_i(\alpha)^sH(\alpha)K_*(\alpha)|\d\alpha\Bigr)^{1/s}.
\end{equation}
Consider an index $i$ with $1\le i\le s$. Since we may suppose that $s\ge k+2$, we deduce from Lemma \ref{lemma3.2} that
\begin{align}
\int_{\fN_i\cap \fp_j}|f_i(\alpha)^sH(\alpha)K_*(\alpha)|\d\alpha &\le H(0)\cM_{i,s}(\fN_i\cap \fp_j;|K_*|)\notag \\
&\ll ZP^{s-k}L(P)^{1-3s\upsilon_{ij}}\label{3.10}.
\end{align}
Next write
\begin{equation}\label{3.11}
\cJ=\int_{-\infty}^\infty |H(\alpha)|^2K_1(\alpha)\d\alpha .
\end{equation}
Then since we may suppose that $s\ge \tfrac{1}{2}s_1$, an application of Schwarz's inequality in combination with Lemmata \ref{lemma2.1} and \ref{lemma3.3} leads via (\ref{2.6}) to the bound
\begin{align}
\int_{\fn_i}|f_i(\alpha)^sH(\alpha)K_*(\alpha)|\d\alpha &\le \cJ^{1/2}\cM_{i,2s}(\fn_i;K_2^*)^{1/2}\notag \\
&\ll Z^{1/2}\left( P^{2s-2k}\Xi_\ep\right)^{1/2},\label{3.12}
\end{align}
where we have written
\begin{equation}\label{3.13}
\Xi_\ep=P^{k-(2s-s_1)(\sigma_1-\ep)}U(P)^{-1}.
\end{equation}

\par On substituting (\ref{3.10}) and (\ref{3.12}) into (\ref{3.9}), we deduce that
$$I(\fp_1)+I(\fp_2)\ll P^{s-k}\left(ZL(P)+Z^{1/2}\Xi_\ep^{1/2}\right)^{1-1/s}\left(ZL(P)^{1-3s}+Z^{1/2}\Xi_\ep^{1/2}\right)^{1/s}.$$
By substituting this bound into the relation (\ref{3.8}), we find that
\begin{align*}
\psi(N)^{-1}Z\ll L(P)^{-2}&Z+L(P)^{1-1/s}\Xi_\ep^{1/(2s)}Z^{(2s-1)/(2s)}\\
&+\Xi_\ep^{(s-1)/(2s)}Z^{(s+1)/(2s)}+\Xi_\ep^{1/2}Z^{1/2}.
\end{align*}
Note that we are at liberty to suppose $\psi(N)$ to be sufficiently small compared to $L(P)$, and further $L(P)$ to be $O(U(P)^\ep)$. Disentangling this inequality, therefore, we conclude from (\ref{3.13}) that for any positive number $\sigma$ with $\sigma<\sigma_1$, one has
\begin{equation}\label{3.14}
Z\ll L(P)^{2s-2}\psi(N)^{2s}\Xi_\ep\ll P^{k-(2s-s_1)\sigma}U(P)^{\ep-1}.
\end{equation}
On recalling that $P=N^{1/k}$, the proof of Theorem \ref{theorem1.4} is complete.
\vskip.3cm

The proof of Theorem \ref{theorem1.5} follows by applying a simplified variant of the above argument, as we now sketch. We make use of the notation applied in the proof of Theorem \ref{theorem1.4} above, fixing $k=3$ and $s=4$. We begin by observing that the argument leading to \cite[Lemma 6.1]{W:FAF} shows on this occasion that whenever $\mu \in (N/2,N]$, then
$$\int_\fM \ftil(\alpha)e(-\alpha \mu)K_\pm (\alpha)\d\alpha =2\tau\Omega_{4,3}(\L,\mu/N)P+O(PL(P)^{-1}).$$
Here we note in particular that when $k=3$ and $s=4$, the inequality (6.4) of \cite{W:FAF} must be replaced by
\begin{align*}
\int_\fM f_1(\alpha)\cdots f_4(\alpha)e(-\alpha \mu)&K_\pm(\alpha)\d\alpha -\int_\fM v_1(\alpha)\cdots v_4(\alpha)e(-\alpha \mu)K_\pm (\alpha)\d\alpha\\
&\ll \sum_{n=1}^\infty n^{-2}\int_{-1/2}^{1/2}P^{7/2}(1+P^3|\alpha|)^{-1}\d\alpha \ll P^{1/2}\log P.
\end{align*}
The latter, achieved using Lemma \ref{lemma2.3}, suffices for the ensuing argument. We therefore see as in (\ref{3.8}) that in the present situation, one has
$$I(\fm\cup\ft)\ge \tfrac{1}{4}P\psi(N)^{-1}Z.$$
The argument of \cite[\S\S4 and 5]{W:FAF}, moreover, shows that
$$\int_{\fm\cup\ft}|f_1(\alpha)\cdots f_4(\alpha)|^2K_2^*(\alpha)\d\alpha \ll P^5L(P)^{-1}.$$
Then on recalling (\ref{3.11}), an application of Schwarz's inequality yields the relation
\begin{align*}
P\psi(N)^{-1}Z\ll I(\fm\cup\ft)&\ll \cJ^{1/2}\Bigl( \int_{\fm\cup\ft}|f_1(\alpha )\cdots f_4(\alpha)|^2K_2^*(\alpha)\d\alpha\Bigr)^{1/2}\\
&\ll P^{5/2}L(P)^{-1/2}Z^{1/2},
\end{align*}
whence $Z\ll P^3\psi(N)^2L(P)^{-1}$. The conclusion of the theorem follows by taking $\psi(N)$ no larger than $L(P)^{1/4}$.

\section{The asymptotic formula for smaller exponents}\label{sect:4}
In order to obtain the sharper results for $3\le k\le 5$ advertised in Theorem \ref{theorem1.6}, we adapt the methods of Wooley \cite{W:03slim} and Kawada and Wooley \cite{KW:10rel}, \cite{KW:10dav}. The following lemma provides the appropriate analogue of \cite[Lemma 6.1]{KW:10rel}, which applies a method of Davenport \cite{Dav:42} to sharpen the conclusion of \cite[Lemma 2.1]{W:03slim} in situations where the exceptional set may be relatively large.

\begin{lemma}\label{lemma4.1} Suppose that $k\ge 3$, that $1\le j\le k-2$, and that $\lambda$ is a non-zero real number. Let $\cZ$ be a subset of $[-P^k, P^k]$ with ${\rm meas}(\cZ)=Z$. Then for every $\ep>0$, one has
$$\int_{-\infty}^\infty |f(\lambda \alpha)^{2^j}H_{\eta,\cZ}(\alpha)^2|K_1(\alpha)\d\alpha \ll P^{2^j}(P^{-1}Z+P^{-1-j/2+\ep}Z^{3/2}).$$
\end{lemma}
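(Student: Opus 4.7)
I would prove Lemma~\ref{lemma4.1} by converting the integral into a Diophantine counting problem---via the Fourier expansion of $|f(\lambda\alpha)|^{2^j}$ and the compactly-supported kernel $\widehat{K_1}$---and then applying $j$-fold Weyl differencing together with a Davenport-style divisor estimate to bound the resulting count. The two summands of the claim correspond respectively to the diagonal and off-diagonal contributions in this count.

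The starting move is to expand $|H_{\eta,\cZ}(\alpha)|^2 = \iint_{\cZ^2}\eta_\mu\overline{\eta_\nu}e(\alpha(\nu-\mu))\,d\mu\,d\nu$, write $|f(\lambda\alpha)|^{2^j} = \sum_n R_j(n)e(\lambda\alpha n)$ (where $R_j(n)$ is the usual representation function counting $n$ as an alternating sum of $2^{j-1}$ $k$-th powers in $[1,P]$), interchange orders of integration, and invoke (\ref{2.9})---which states that $\widehat{K_1}$ is supported in $(-\delta,\delta)$ with peak $\delta^{-1}$---to dominate the original integral by
$$\delta^{-1}\sum_{n}R_j(n)\,\mathrm{meas}\bigl\{(\mu,\nu)\in\cZ^2 : |\lambda n + \nu-\mu| < \delta\bigr\}.$$
The diagonal $n=0$ contribution is $\ll R_j(0)\cdot Z$; combined with the trivial majorisation of the remaining $2^j-2$ factors of $f(\lambda\alpha)$ and a Lemma~\ref{lemma2.2}-type estimate on the minimal pair of factors (which controls the innermost $|f(\lambda\alpha)|^2|H|^2K_1$ integral locally on sub-ranges of $\cZ$ of length compatible with the hypothesis of that lemma, then pasted together), this yields the first summand $P^{2^j-1}Z$ of the claim.

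For the off-diagonal terms ($n\neq 0$), I would apply $j$ successive Weyl differencings to $|f(\lambda\alpha)|^{2^j}$, producing a sum of the shape $P^{2^j-j-1}\sum_{\mathbf{h}\in[-P,P]^j}\sum_{y}e(\lambda\alpha\,\psi_j(\mathbf{h},y))$, where $\psi_j(\mathbf{h},y)$ is a polynomial of degree $k-j$ in $y$ with leading coefficient $\frac{k!}{(k-j)!}h_1\cdots h_j$; the hypothesis $j\le k-2$ is precisely what guarantees $\deg_y\psi_j\ge 2$, which is needed for the next step. To extract the $Z^{3/2}$ factor, I would apply Cauchy--Schwarz to the $(\mu,\nu)$ integration, using the interpolation bound $\|\,|H|^2K_1\,\|_2 \le \sqrt{2}\,Z^{3/2}$ obtained from $\|\,|H|^2K_1\,\|_1 \le 2Z$ (Lemma~\ref{lemma2.1}) together with the trivial $\|\,|H|^2K_1\,\|_\infty \le Z^2$. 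The problem then reduces to bounding an $L^2$-type mean value of the Weyl-differenced polynomial exponential sum, and Davenport's divisor technique \cite{Dav:42}---exploiting that $\partial_y\psi_j(\mathbf{h},y)\asymp h_1\cdots h_j\,y^{k-j-1}$ and invoking standard divisor function bounds on the resulting polynomial equation---yields a saving of $P^{1/2+o(1)}$ per Weyl difference, aggregating to the factor $P^{-j/2+\ep}$ in the second summand.

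The chief technical obstacle is the Davenport counting step: obtaining the full saving of $P^{j/2}$ requires carrying out the divisor estimate uniformly in the auxiliary tuple $\mathbf{h}$ and correctly isolating the degenerate sub-tuples (those with some $h_i=0$), whose contribution reverts to a lower-order Weyl sum and must be absorbed into the diagonal $P^{-1}Z$ term via induction on $j$. The upper bound $j\le k-2$ is essential: were $j$ allowed to reach $k-1$, the polynomial $\psi_j$ would collapse to a linear function of $y$, Davenport's derivative-based divisor argument would degenerate, and the $P^{j/2}$ saving would be lost.
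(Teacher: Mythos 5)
Your outline has the right ingredients — Weyl differencing, a divisor-function bound in the style of Davenport, and the observation that $j\le k-2$ is needed so that the differenced polynomial has degree at least two in the free variable — but the architecture of the argument does not hold together, and the crucial mechanism producing the $Z^{3/2}$ term is missing.

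First, the decomposition. You propose to expand $|f(\lambda\alpha)|^{2^j}=\sum_n R_j(n)e(\lambda\alpha n)$, isolate the ``diagonal'' term $n=0$, and then apply $j$-fold Weyl differencing to ``the off-diagonal terms.'' This is not coherent: Weyl differencing produces an upper bound for $|f(\lambda\alpha)|^{2^j}$ \emph{as a whole}, not a term-by-term transformation of its Fourier coefficients, so one cannot first discard the $n=0$ contribution and then difference what remains. The correct order (and the one the paper uses) is to Weyl-difference first, pass to a count $\cW(\mu,\nu)$ of solutions to $|\lambda h_1\cdots h_jp_j(x;\h)+\mu-\nu|<\delta$, and \emph{then} split according to whether $h_1\cdots h_j=0$ or not. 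The degenerate set $h_1\cdots h_j=0$ has $O(P^j)$ members and contributes $\ll P^jZ$ directly; no induction on $j$, no patchwork of Lemma~\ref{lemma2.2} over short subintervals, and no appeal to $R_j(0)$ is required. (Your proposed pasting of Lemma~\ref{lemma2.2} also runs into trouble because $|H_{\cZ}|^2$ is not the sum of $|H_{\cZ_i}|^2$ over subintervals $\cZ_i$; the cross-terms are not negligible.)

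Second, and more seriously, your route to the $Z^{3/2}$ factor does not give the claimed exponent. You propose to apply Cauchy--Schwarz using the interpolation bound $\|\,|H|^2K_1\,\|_2\le\sqrt{2}\,Z^{3/2}$ and then control an $L^2$-mean of the Weyl-differenced sum. If $g(\alpha)$ denotes that differenced sum, Lemma~\ref{lemma2.3} reduces $\int|g(\lambda\alpha)|^2K_1\,\d\alpha$ to $\int_0^1|g|^2\,\d\alpha$, whose diagonal alone is $\asymp P^{j+1}$; after restoring the Weyl prefactor $P^{2^j-j-1}$ this yields $P^{2^j-(j+1)/2+\ep}Z^{3/2}$, a full factor of $P^{1/2}$ short of the target $P^{2^j-1-j/2+\ep}Z^{3/2}$. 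The extra half power is precisely what the paper wins by a more delicate two-step argument: one applies Schwarz in the $\nu$-integration to obtain $\cI_1\le Z^{1/2}P^{j/2}\cI_2^{1/2}$ (the $P^{j/2}$ coming from Cauchy's inequality over the $\h$-sum), and then bounds $\cI_2$ by analysing pairs $(x_1,x_2)$ with $p_j(x_1;\h)=p_j(x_2;\h)$ separately from those with $p_j(x_1;\h)\ne p_j(x_2;\h)$, obtaining $\cI_2\ll\delta(\cI_1+P^\ep Z^2)$. Solving this recursion gives $\cI_1\ll\delta P^jZ+\delta^{1/2}P^{j/2+\ep}Z^{3/2}$, which is what the lemma requires. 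This self-referential estimate for $\cI_2$ in terms of $\cI_1$ is the heart of Davenport's method here, and it is the piece your plan does not reproduce. Your observation that $j\le k-2$ guarantees $\deg_x p_j\ge 2$ — needed so that fixing a nonzero value of $p_j(x_1;\h)-p_j(x_2;\h)$ pins down $(x_1,x_2)$ to $O(P^\ep)$ possibilities rather than $O(P)$ — is correct and relevant to that step.
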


\begin{proof} We apply Weyl differencing as in the proof of \cite[Lemma 2.1]{W:03slim} to deduce that, for suitably defined intervals $I_j(\h)\subseteq [1,P]\cap \Z$, one has
$$|f(\lambda \alpha)|^{2^j}\le (2P)^{2^j-j-1}\sum_{\h \in (-P,P)^j}\sum_{x \in I_j(\h)}e(\lambda \alpha h_1\cdots h_j p_j(x;\h)),$$
where $p_j$ is a polynomial of degree $k-j$ in $x$. As in the proof of Lemma \ref{lemma2.2}, it therefore follows that
\begin{equation}\label{4.1}
\int_{-\infty}^{\infty}|f(\lambda \alpha)^{2^j}H_{\eta,\cZ}(\alpha)^2|K_1(\alpha)\d\alpha \ll P^{2^j-j-1}\int_{\cZ^2}\delta^{-1}\cW(\mu,\nu)\d\mu \d\nu,
\end{equation}
where $\cW(\mu,\nu)$ denotes the number of integral solutions of the inequality
$$|\lambda h_1\cdots h_jp_j(x;\h)+\mu-\nu|<\delta,$$
with $|h_i|<P$ $(1\le i\le j)$ and $1\le x\le P$. We let $\cW_0(\mu,\nu)$ denote the number of solutions $x$, $\h$ counted by $\cW(\mu,\nu)$ with $h_1\cdots h_j=0$, and write $\cW_1(\mu,\nu)$ for the corresponding number of solutions with $h_1\cdots h_j\ne 0$.\par

The analysis of $\cW_0(\mu,\nu)$ is straightforward. There are at most $O(P^j)$ choices for $x$ and $\h$ with $h_1\cdots h_j=0$, and so $\cW_0(\mu,\nu)\ll P^jU_{\delta}(\mu-\nu)$. Then it follows as in the proof of Lemma \ref{lemma2.2} that
\begin{equation}\label{4.2}
\int_{\cZ^2}\delta^{-1}\cW_0(\mu,\nu)\d\mu \d\nu \ll P^jZ.
\end{equation}

\par Turning our attention next to $\cW_1(\mu,\nu)$, we denote by $\rho(\beta,\h)$ the number of solutions of the inequality
$$|\lambda h_1\cdots h_jp_j(x;\h)+\beta|<\delta,$$
with $1\le x\le P$. Then one has
$$\cW_1(\mu,\nu)=\sideset{}{'}\sum_\h \rho(\mu-\nu,\h),$$
where we write $\sum'$ to denote the sum over integral $j$-tuples $\h$ with $0<|h_i|<P$ $(1 \le i \le j)$. We also find it convenient to write
$$\cI_1=\int_{\cZ^2}\cW_1(\mu,\nu)\d\mu \d\nu.$$
On applying Schwarz's inequality, followed by Cauchy's inequality, we deduce that
\begin{equation}\label{4.3}
\cI_1\le \biggl(\int_\cZ \d\nu \biggr)^{1/2}\biggl(\int_\cZ \biggl|\sideset{}{'}\sum_\h \int_\cZ \rho(\mu-\nu,\h)\d\mu \biggr|^2\d\nu \biggr)^{1/2}\le Z^{1/2}P^{j/2}\cI_2^{1/2},
\end{equation}
where
$$\cI_2=\int_\cZ\int_{\cZ^2}\sideset{}{'}\sum_\h \rho(\mu_1-\nu,\h)\rho(\mu_2-\nu,\h)\d\mu_1\d\mu_2\d\nu.$$

\par Write $\cV(\mu_1,\mu_2,\nu)$ for the number of solutions of the simultaneous inequalities
$$|\lambda h_1\cdots h_jp_j(x_1;\h)+\mu_1-\nu|<\delta \quad \mbox{and} \quad |\lambda h_1\cdots h_jp_j(x_2;\h)+\mu_2-\nu|<\delta,$$
with $0<|h_i|<P$ $(1\le i\le j)$ and $1\le x_1,x_2\le P$. In addition, denote by $\cV_0(\mu_1,\mu_2,\nu)$ the number of solutions $\x,\h$ counted by $\cV(\mu_1,\mu_2,\nu)$ in which $p_j(x_1;\h)=p_j(x_2;\h)$, and by $\cV_1(\mu_1,\mu_2,\nu)$ the corresponding number of solutions with $p_j(x_1;\h)\ne p_j(x_2;\h)$. It follows that
\begin{equation}\label{4.4}
\cI_2=\int_\cZ \int_{\cZ^2}\biggl(\cV_0(\mu_1,\mu_2,\nu)+\cV_1(\mu_1,\mu_2,\nu)\biggr)\d\mu_2\d\mu_1\d\nu.
\end{equation}

\par Plainly, for each fixed choice of $\h$, $x_2$, and $\nu$, one has
\begin{equation}\label{4.5}
\int_\cZ U_\delta(\lambda h_1\cdots h_jp_j(x_2;\h)+\mu_2-\nu)\d\mu_2\le 2\delta.
\end{equation}
For a given integer $x_1$, there are $O(1)$ values of $x_2$ satisfying $p_j(x_1;\h)=p_j(x_2;\h)$, and hence by summing over $\h$ and $\x$ we deduce from (\ref{4.5}) that
\begin{equation}\label{4.6}
\int_\cZ \int_{\cZ^2}\cV_0(\mu_1,\mu_2,\nu)\d\mu_2\d\mu_1\d\nu \ll \delta \int_\cZ \int_\cZ \cW_1(\mu_1,\nu)\d\mu_1\d\nu =\delta \cI_1.
\end{equation}

\par For a solution $\h$, $\x$ counted by $\cV_1(\mu_1,\mu_2,\nu)$, on the other hand, we observe that the quantity $n=h_1\cdots h_j(p_j(x_1;\h)-p_j(x_2;\h))$ is a nonzero integer satisfying the inequality
\begin{equation}\label{4.7}
|\lambda n+\mu_1-\mu_2|<2\delta.
\end{equation}
Hence for each fixed non-zero value of $n$, a divisor function estimate shows that there are $O(P^\ep )$ possibilities for $\h$ and $\x$. Moreover, for a given $\mu_1$ and $\mu_2$, there are at most $1+4\delta|\lambda|^{-1}$ integers $n$ for which (\ref{4.7}) holds.
After applying the obvious analogue of (\ref{4.5}) to integrate over $\nu$, we therefore deduce that
\begin{align}\label{4.8}
\int_\cZ \int_{\cZ^2}\cV_1(\mu_1,\mu_2,\nu)\d\mu_2\d\mu_1\d\nu &\ll \delta P^\ep \int_{\cZ^2}\sum_{n\in\Z}U_{2\delta}(\lambda n+\mu_1-\mu_2)\d\mu_1\d\mu_2 \notag \\ &\ll \delta P^\ep Z^2.
\end{align}
Consequently, by substituting this estimate together with (\ref{4.6}) into (\ref{4.4}), we obtain the bound
\begin{equation}\label{4.9}
\cI_2\ll \delta(\cI_1+P^\ep Z^2).
\end{equation}

\par Finally, by employing (\ref{4.9}) within (\ref{4.3}), we see that
$$\cI_1\ll \bigl(\delta P^jZ(\cI_1+P^\ep Z^2)\bigr)^{1/2},$$
whence
$$\cI_1\ll \delta P^jZ+\delta^{1/2}P^{j/2+\ep}Z^{3/2}.$$
The lemma now follows from (\ref{4.1}) and (\ref{4.2}), on noting that
$$\delta^{-1/2}\ll L(P)^{1/2} \ll P^\ep.$$
\end{proof}

Our result for quintic forms in $31$ variables instead makes use of the following analogue of \cite[Lemma 3.1]{W:03slim}.

\begin{lemma}\label{lemma4.2}
Suppose that $k\ge 3$, that $2\le j \le k-1$, and that $\lambda$ and $\gamma$ are non-zero real numbers. Let $\cZ$ be a subset of $[-P^k, P^k]$ with ${\rm meas}(\cZ)=Z$. Then for every $\ep>0$, one has
$$\int_{-\infty}^\infty |f(\lambda \alpha)^{2^j}f(\gamma \alpha)^{2^{j-1}}H_{\eta, \cZ}(\alpha)^2|K_1(\alpha)\d\alpha \ll P^{2^j+2^{j-1}}(P^{-2}Z + P^{-1-j+\ep}Z^2).$$
\end{lemma}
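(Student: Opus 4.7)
The plan is to mimic the Weyl differencing argument of Lemma \ref{lemma4.1}, but applied to both exponential sums. Specifically, I would apply Weyl differencing $j$ times to $|f(\lambda\alpha)|^{2^j}$ and $j-1$ times to $|f(\gamma\alpha)|^{2^{j-1}}$. For suitable intervals $I_j(\h)$, $I_{j-1}(\m)$ and polynomials $p_j(x;\h)$ of degree $k-j$ and $q_{j-1}(y;\m)$ of degree $k-j+1$ (the latter being at least $2$, since $j\le k-1$), this yields
\[
|f(\lambda\alpha)|^{2^j}|f(\gamma\alpha)|^{2^{j-1}}\le (2P)^{2^j+2^{j-1}-2j-1}\sum_{\h,\m}\sum_{x,y}e(\alpha T(x,\h,y,\m)),
\]
where $T=\lambda h_1\cdots h_j p_j(x;\h)+\gamma m_1\cdots m_{j-1}q_{j-1}(y;\m)$. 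After integrating against $|H_{\eta,\cZ}(\alpha)|^2 K_1(\alpha)$ and invoking Fubini together with the bound (\ref{2.9}), the mean value under consideration is controlled by
\[
P^{2^j+2^{j-1}-2j-1}\delta^{-1}\int_{\cZ^2}\cW(\mu,\nu)\,\d\mu\,\d\nu,
\]
where $\cW(\mu,\nu)$ counts integer tuples $(x,\h,y,\m)$ satisfying $|T+\mu-\nu|<\delta$.

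Next, I would split $\cW=\cW_0+\cW_1$, where $\cW_0$ counts the fully degenerate tuples in which both $h_1\cdots h_j=0$ and $m_1\cdots m_{j-1}=0$, so that $T=0$. Straightforward counting gives $O(P^{2j-1})$ such tuples, and the estimate $\int_{\cZ^2}U_\delta(\mu-\nu)\,\d\mu\,\d\nu\le 2\delta Z$ (as in the treatment leading to (\ref{4.2})) yields $\int_{\cZ^2}\cW_0\,\d\mu\,\d\nu\ll\delta P^{2j-1}Z$, contributing the first term $P^{2^j+2^{j-1}-2}Z$.

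The principal obstacle lies in handling $\cW_1$, where at least one of $\h$ or $\m$ is non-degenerate, so that the corresponding polynomial difference is a non-zero integer. Here I would adopt the self-pairing Cauchy--Schwarz approach of Lemma \ref{lemma4.1}: write $\cI_1'=\int_{\cZ^2}\cW_1\,\d\mu\,\d\nu$, apply Schwarz's inequality to reduce to a pair integral $\cI_2'$ that counts quadruples of tuples weighted against $\int_{\cZ^3}U_\delta(\mu_1-\nu+T_1)U_\delta(\mu_2-\nu+T_2)\,\d\mu_1\d\mu_2\d\nu$, and then split $\cI_2'=\cV_0'+\cV_1'$ according as the polynomial values $p_j(x_1;\h)-p_j(x_2;\h)$ and $q_{j-1}(y_1;\m)-q_{j-1}(y_2;\m)$ both vanish or not. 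The diagonal portion yields $\cV_0'\ll\delta\cI_1'$ by a reduction mirroring (\ref{4.6}). For the off-diagonal portion $\cV_1'$, the non-vanishing integer difference $T_1-T_2$ permits a divisor-function estimate, where the key input is that $q_{j-1}$ has degree at least $2$, so that for each non-zero value of the product $m_1\cdots m_{j-1}(q_{j-1}(y_1;\m)-q_{j-1}(y_2;\m))$ the number of representations is $O(P^\ep)$. Combining with a similar estimate for the $(x,\h)$-variables and exploiting the integration over pairs $(\mu_1,\mu_2)\in\cZ^2$ (rather than only over $\mu_1\in\cZ$, as in the simpler situation of Lemma \ref{lemma4.1}) produces the $Z^2$ factor together with the saving $P^{-1-j+\ep}$. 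After a bootstrapping step analogous to (\ref{4.9}), one deduces $\int_{\cZ^2}\cW_1\,\d\mu\,\d\nu\ll\delta P^{2j-1}Z+\delta P^{j+\ep}Z^2$, and using $\delta^{-1/2}\ll P^\ep$ assembles the claimed bound.

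The hardest step will be the careful divisor analysis in $\cV_1'$: one must account for the simultaneous structure of the integers $h_1\cdots h_j$ (times a polynomial of degree $k-j$, which may be linear when $j=k-1$) and $m_1\cdots m_{j-1}$ (times $q_{j-1}$ of degree at least $2$), verifying that the combined count in an interval of length $O(\delta)$ is $O(P^{j+\ep})$ rather than the larger value one would obtain from a trivial pairing. This is exactly where the hypothesis $j\le k-1$ is used, and it is what allows the $Z^2$ bound in place of the $Z^{3/2}$ bound of Lemma \ref{lemma4.1}.
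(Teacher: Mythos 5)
There is a genuine gap. You propose to apply Weyl differencing to \emph{both} exponential sums simultaneously, producing a phase of the form $T=\lambda h_1\cdots h_j p_j(x;\h)+\gamma m_1\cdots m_{j-1}q_{j-1}(y;\m)$, and then to control the off-diagonal portion $\cV_1'$ by a divisor estimate applied to the ``non-vanishing integer difference $T_1-T_2$''. But $T_1-T_2$ is not an integer: it has the shape $\lambda D_1+\gamma D_2$ with $D_1=h_1\cdots h_j\bigl(p_j(x_1;\h)-p_j(x_2;\h)\bigr)$ and $D_2=m_1\cdots m_{j-1}\bigl(q_{j-1}(y_1;\m)-q_{j-1}(y_2;\m)\bigr)$ integers, and $\lambda,\gamma$ arbitrary non-zero reals. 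For fixed $\mu_1,\mu_2$ the constraint $|\lambda D_1+\gamma D_2+\mu_1-\mu_2|<2\delta$ only confines the real combination $\lambda D_1+\gamma D_2$ to an interval of length $O(\delta)$; when $\lambda/\gamma$ is irrational that interval can contain \emph{many} pairs $(D_1,D_2)$ of the admissible sizes, so the divisor bound cannot be invoked to yield $O(P^\ep)$ possibilities, and the claimed conclusion $\int_{\cZ^2}\cW_1\,\d\mu\,\d\nu\ll\delta P^{2j-1}Z+\delta P^{j+\ep}Z^2$ does not follow. Trying instead to fix $D_2$ first (so that $D_1$ is pinned down to $O(1)$ values) and then sum over the $\ll P^{j+1}$ possible values of $D_2$ destroys the desired saving.

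The paper avoids this obstruction by Weyl-differencing \emph{only} the $\lambda$-sum, keeping $|f(\gamma\alpha)|^{2^{j-1}}$ intact as a sum over $\x,\y$ of $k$-th power differences. The decomposition is then according as $h_1\cdots h_jp_j(z;\h)=0$ or not (rather than according to the two products vanishing). In the non-degenerate case, for each of the $O(P^{2^{j-1}})$ choices of $\x,\y$ the quantity $\beta(\x,\y)=\gamma\sum_i(x_i^k-y_i^k)$ is a fixed real parameter, the single integer $n=h_1\cdots h_jp_j(z;\h)$ is determined to $O(1)$ values by the inequality $|\lambda n+\beta(\x,\y)+\mu-\nu|<\delta$, and the divisor estimate applies cleanly to $n$ alone; integrating over $(\mu,\nu)\in\cZ^2$ directly yields the $Z^2$ without Cauchy--Schwarz. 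The second differencing (of the $\gamma$-sum) is performed only in the degenerate case $h_1\cdots h_jp_j(z;\h)=0$, where the $\lambda$-part has dropped out entirely, so there is only one integer to control and the difficulty you run into never arises. Your identification that $j\le k-1$ guarantees the relevant polynomial has degree $\ge 2$ is in the right spirit, but is attached to the wrong decomposition.
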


\begin{proof} By proceeding as in the proof of Lemma \ref{lemma4.1}, we find that the integral $I$ under consideration satisfies
\begin{equation}\label{4.10}
I\ll P^{2^j-j-1}\int_{\cZ^2}\delta^{-1}\cW(\mu, \nu)\d\mu \d\nu,
\end{equation}
where $\cW(\mu,\nu)$ denotes the number of solutions of the inequality
$$\biggl|\lambda h_1 \cdots h_jp_j(z;\h)+\gamma \sum_{i=1}^{2^{j-2}}(x_i^k-y_i^k)+\mu-\nu\biggr|<\delta,$$
with $|h_l| < P$ $(1\le l\le j)$, $1\le x_i, y_i\le P$ $(1\le i\le 2^{j-2})$, and $1\le z\le P$. Here, the polynomial $p_j(z;\h)$ that arises from the Weyl differencing process has degree $k-j$ in $z$. We let $\cW_0(\mu,\nu)$ denote the number of solutions $\h$, $\x$, $\y$, $z$ counted by $\cW(\mu,\nu)$ with $h_1\cdots h_jp_j(z;\h)=0$, and we write $\cW_1(\mu,\nu)$ for the corresponding number of solutions with $h_1\cdots h_jp_j(z;\h)\ne 0$.\par

Consider first a solution $\h$, $\x$, $\y$, $z$ counted by $\cW_0(\mu,\nu)$. There are $O(P^j)$ choices for $\h$ and $z$ satisfying $h_1\cdots h_jp_j(z;\h)=0$, and we therefore see that
\begin{equation}\label{4.11}
\cW_0(\mu,\nu)\ll P^j\cV(\mu,\nu),
\end{equation}
where $\cV(\mu,\nu)$ denotes the number of solutions of the inequality
$$\biggl|\gamma\sum_{i=1}^{2^{j-2}}(x_i^k-y_i^k)+\mu-\nu\biggr|<\delta,$$
with $1\le x_i,y_i\le P$ $(1\le i\le 2^{j-2})$. It follows from (\ref{2.4}) that
$$\cV(\mu,\nu)\ll \int_{-\infty}^{\infty}|f(\gamma \alpha)|^{2^{j-1}}e(\alpha(\mu-\nu))K_{+}(\alpha)\d\alpha,$$
where we take $\tau =\delta$ in the definition of $K_{+}$. On substituting this estimate into (\ref{4.11}), we conclude thus far that
\begin{equation}\label{4.12}
\int_{\cZ^2}\cW_0(\mu,\nu)\d\mu\d\nu\ll P^j\int_{-\infty}^{\infty}|f(\gamma \alpha)^{2^{j-1}}H_{\eta,\cZ}(\alpha)^2|K_{+}(\alpha)\d\alpha.\end{equation}

\par Next, by applying Weyl differencing together with the second inequality of (\ref{2.4}), we find that
\begin{equation}\label{4.13}
\int_{-\infty}^{\infty}|f(\gamma \alpha)^{2^{j-1}}H_{\eta,\cZ}(\alpha)^2|K_{+}(\alpha)\d\alpha\ll P^{2^{j-1}-j}\int_{\cZ^2}\cX(\mu,\nu)\d\mu\d\nu ,
\end{equation}
where $\cX(\mu,\nu)$ denotes the number of solutions of the inequality
$$|\gamma g_1\cdots g_{j-1}p_{j-1}(w;\g)+\mu-\nu|<2\delta,$$
with $|g_i|<P$ $(1\le i\le j-1)$ and $1\le w\le P$. Here the polynomial $p_{j-1}(w;\g)$ produced by the differencing operation is a polynomial of degree $k-j+1$ in $w$. Then by proceeding as in the arguments leading to (\ref{4.2}) and (\ref{4.8}) in the proof of Lemma \ref{lemma4.1}, one finds that
$$\int_{\cZ^2}\cX(\mu, \nu)\d\mu \d\nu \ll \delta P^{j-1}Z+P^\ep Z^2.$$
It therefore follows from (\ref{4.12}) and (\ref{4.13}) that
\begin{equation}\label{4.14}
\int_{\cZ^2}\cW_0(\mu,\nu)\d\mu \d\nu \ll P^{2^{j-1}}(\delta P^{j-1}Z+P^{\ep}Z^2).
\end{equation}

\par Consider next a solution $\h$, $\x$, $\y$, $z$ counted by $\cW_1(\mu,\nu)$. Given any fixed one amongst the $O(P^{2^{j-1}})$ possible choices for $\x$ and $\y$, write
$$\beta(\x,\y)=\gamma \sum_{i=1}^{2^{j-2}}(x_i^k-y_i^k).$$
The number of available choices for $\h$ and $z$ is then equal to the number of solutions of the equation
$$h_1\cdots h_jp_j(z;\h)=n,$$
with $|h_i|<P$ $(1\le i\le j)$, $1\le z\le P$, and $n$ a non-zero integer satisfying the inequality
$$|\lambda n+\beta(\x,\y)+\mu-\nu|<\delta.$$
It follows that a divisor function estimate once again yields the upper bound
\begin{align}\label{4.15}
\int_{\cZ^2}\cW_1(\mu, \nu)\d\mu \d\nu &\ll P^{2^{j-1}+\ep}\int_{\cZ^2}\sum_{n\in \Z}U_\delta(\lambda n+\beta(\x,\y)+\mu-\nu)\d\mu \d\nu \notag \\
&\ll P^{2^{j-1}+\ep}Z^2.
\end{align}

\par It only remains now to combine (\ref{4.14}) and (\ref{4.15}) within (\ref{4.10}), and we obtain the bound
$$I\ll P^{2^j-j-1}\bigl(P^{2^{j-1}}(P^{j-1}Z+\delta^{-1}P^\ep Z^2)\bigr).$$
The lemma now follows on recalling again that $\delta^{-1}\ll L(P)\ll P^\ep$.\end{proof}

The conclusion of Theorem \ref{theorem1.6} follows from the following more general result.

\begin{theorem}\label{theorem4.3}
Suppose that $k\ge 3$ and that $(s_1,\sigma_1,U)$ forms an accessible triple. Then whenever $s\ge \frac{1}{2}s_1+2^{k-3}$, $\sigma<\sigma_1$, and $\psi(N)$ grows sufficiently slowly in terms of $s$, $\sigma$, $k$, $\L$, and $\tau$, one has
$$\tZ_{s,k}(N)\ll N^{1-1/k-(2s-s_1-2^{k-2})\sigma/k}U(N^{1/k})^{\ep-1}+N^{1-2(2s-s_1-2^{k-2})\sigma/k+\ep}U(N^{1/k})^{-2}.$$
Moreoever, when instead $k\ge 4$ and $(2s-s_1-\frac{3}{8}2^k)\sigma_1>1$, one has
$$\tZ_{s,k}(N)\ll N^{1-2/k-(2s-s_1-\frac{3}{8}2^k)\sigma/k}U(N^{1/k})^{-1}.$$
\end{theorem}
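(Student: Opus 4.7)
The plan is to run the framework of the proof of Theorem~\ref{theorem1.4} from Section~\ref{sect:3}, replacing the crude bound $\cJ\le 2Z$ used there on the $\fn_i$-portion of each Hölder factor by the Weyl-differenced analogues furnished by Lemmas~\ref{lemma4.1} and \ref{lemma4.2}. After setting $P=N^{1/k}$, constructing the exceptional subset $\cZ^*\subseteq\ctZ^\tau_{s,k}(N;\psi;\L)$ together with unit-modulus weights $\eta_\mu$ and the exponential integral $H=H_{\eta,\cZ^*}$, and invoking the Bentkus--Götze--Freeman major arc evaluation as in (3.5)--(3.8), I arrive at
\[
I(\fp_1)+I(\fp_2) \gg P^{s-k}\psi(N)^{-1}Z,
\]
where $I(\fB)=\int_\fB|\ftil H K_*|\d\alpha$ as in (3.7).

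For the first conclusion I apply Hölder per factor as in (3.9). The $\fN_i\cap\fp_j$-piece is handled as in (3.10) via $|H|\le Z$ and Lemma~\ref{lemma3.2}. On the complementary $\fn_i$-piece, writing $u=s-2^{k-3}$, I split
\[
|f_i|^s|H||K_*| = \bigl(|f_i|^u(K_2^*)^{1/2}\bigr)\cdot\bigl(|f_i|^{2^{k-3}}|H|K_1^{1/2}\bigr)
\]
and apply Cauchy--Schwarz, obtaining
\[
\int_{\fn_i}|f_i|^s|H||K_*|\d\alpha \le \cM_{i,2u}(\fn_i;K_2^*)^{1/2}\left(\int_{-\infty}^\infty |f_i|^{2^{k-2}}|H|^2 K_1\d\alpha\right)^{1/2}.
\]
The hypothesis $s \ge \tfrac{1}{2}s_1+2^{k-3}$ ensures $2u\ge s_1$, so Lemma~\ref{lemma3.3} bounds the first factor, while Lemma~\ref{lemma4.1} at $j=k-2$ bounds the second. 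Multiplying through the Hölder product, expanding via $(a+b)^{1/s}\le a^{1/s}+b^{1/s}$, and isolating the dominant term as in the proof of Theorem~\ref{theorem1.4}, one finds
\[
I(\fp_j) \ll P^{s-(k+1)/2-(2u-s_1)\sigma/2}U(P)^{-1/2}\bigl(Z^{1/2}+P^{-(k-2)/4+\ep/2}Z^{3/4}\bigr)
\]
plus subdominant contributions. Matching this against $P^{s-k}\psi^{-1}Z$, splitting into sub-cases according to whether $Z^{1/2}$ or $Z^{3/4}$ prevails, and substituting $2u-s_1=2s-s_1-2^{k-2}$ (with $\psi$-powers absorbed into $U^\ep$), delivers precisely the two terms of the first conclusion.

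For the second conclusion the same scheme is used, with Lemma~\ref{lemma4.2} at $j=k-2$ replacing Lemma~\ref{lemma4.1}. Because that lemma couples two distinct linear forms $f(\lambda\alpha)$ and $f(\gamma\alpha)$, the Cauchy--Schwarz step is extended via AM-GM across two disjoint index sets $D_1$ and $D_2$ of sizes $2^{k-3}$ and $2^{k-4}$, yielding the Weyl-differenced bound $P^{3\cdot 2^{k-3}}(P^{-2}Z + P^{-(k-1)+\ep}Z^2)$ with $|D_1|+|D_2|=3\cdot 2^{k-4}=\tfrac{3}{16}2^k$ variables consumed. The complementary side is then arranged, via an auxiliary Hölder step, to isolate a single mean value of weight $2t=2s-\tfrac{3}{8}2^k$, bounded by Lemma~\ref{lemma3.3}; this is exactly where the hypothesis $(2s-s_1-\tfrac{3}{8}2^k)\sigma_1>1$ enters, ensuring both applicability of Lemma~\ref{lemma3.3} at this exponent and the dominance of the $P^{-2}Z$ term of Lemma~\ref{lemma4.2} over $P^{-(k-1)+\ep}Z^2$. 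Disentangling as in the first case then yields $Z\ll P^{k-2-(2s-s_1-\frac{3}{8}2^k)\sigma}U(P)^{-1}$. The crux of the second argument lies precisely here: arranging the pair of decompositions so that a \emph{single} mean value carries the full $U^{-1}$-saving at the sharp exponent $(2s-s_1-\tfrac{3}{8}2^k)\sigma$, rather than the diluted $(2|M|-s_1)\sigma/|M|$ with $U^{-1/|M|}$ that a symmetric Hölder over $|M|$ variables would produce---the same principle that preserves $U^{-1}$ in the per-factor Cauchy--Schwarz of Theorem~\ref{theorem1.4}, and the technical heart of the slim exceptional set estimates of \cite{W:03slim}, \cite{KW:10rel}, \cite{KW:10dav}.
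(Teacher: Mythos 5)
Your proof of the first estimate is correct and coincides with the paper's: it runs the Theorem~\ref{theorem1.4} framework, applies the per-factor Cauchy--Schwarz with the split $|f_i|^s|H||K_*|=(|f_i|^{s-2^{k-3}}(K_2^*)^{1/2})(|f_i|^{2^{k-3}}|H|K_1^{1/2})$, bounds the two factors via Lemma~\ref{lemma3.3} (using $2s-2^{k-2}\ge s_1$, which follows from the hypothesis on $s$) and Lemma~\ref{lemma4.1} at $j=k-2$, and disentangles. Your intermediate bound for $I(\fp_j)$ checks out, and the two terms in the conclusion correspond, as you say, to which of $Z^{1/2}$ and $P^{-(k-2)/4+\ep/2}Z^{3/4}$ dominates.

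For the second estimate your overall outcome is right, but you have overcomplicated the route based on a misreading of Lemma~\ref{lemma4.2}. That lemma does \emph{not} require $\lambda\neq\gamma$ --- it applies equally well with $\gamma=\lambda$, in which case it yields
$$
\int_{-\infty}^\infty |f(\lambda\alpha)|^{3\cdot 2^{k-3}}|H_{\eta,\cZ}(\alpha)|^2K_1(\alpha)\d\alpha \ll P^{3\cdot 2^{k-3}}\bigl(P^{-2}Z+P^{-(k-1)+\ep}Z^2\bigr).
$$
The paper's intended (if terse) substitution is exactly this: keep the same per-factor Hölder and Cauchy--Schwarz scheme, but peel off $|f_i|^{\frac{3}{16}2^k}$ rather than $|f_i|^{2^{k-3}}$, so that the Weyl-differenced factor is $\int|f_i|^{3\cdot 2^{k-3}}|H|^2K_1$ (bounded as above with $\lambda=\gamma=\lambda_i$, $j=k-2$) and the complementary factor is a single mean value $\cM_{i,\,2s-\frac{3}{8}2^k}(\fn_i;K_2^*)$, bounded by Lemma~\ref{lemma3.3}. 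This reproduces the paper's display and leads immediately to $Z\ll P^{k-2-(2s-s_1-\frac{3}{8}2^k)\sigma}U(P)^{-1}$. Your alternative scheme --- distributing powers over two disjoint index sets $D_1,D_2$ and recombining with AM--GM, then an ``auxiliary H\"older step'' --- can be made to work, but as written it is underspecified: once two different $\lambda_i$ enter a single integrand you can no longer simply carry out the per-factor argument of \S\ref{sect:3}, and you must explain precisely how the remaining Hölder exponents are reallocated so that each residual factor produces a mean value of the correct weight $2s-\frac{3}{8}2^k$ carrying the full $U^{-1}$-saving, rather than a product of lower-weight means with diluted savings (the very phenomenon you rightly flag in your closing remark). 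Taking $\gamma=\lambda$ sidesteps all of this. Your identification of the dual role of the hypothesis $(2s-s_1-\frac{3}{8}2^k)\sigma_1>1$ --- guaranteeing positivity so that Lemma~\ref{lemma3.3} applies, and dominance of the $P^{-2}Z$ term over the $P^{-(k-1)+\ep}Z^2$ term in the disentangling --- is correct.
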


\begin{proof} We follow the argument of the proof of Theorem \ref{theorem1.4} from \S3, economising on details for the sake of concision. Recalling the definitions of $\fp_j$, and of $I(\fB)$ from (\ref{3.7}), we find that $Z$, the measure of ${\widetilde \cZ}_{s,k}^\tau(N;\psi;\L)$, again satisfies the relation (\ref{3.8}). Let $j$ be either $1$ or $2$. Then an application of H\"older's inequality again yields the bound (\ref{3.9}). Consider an index $i$ with $1\le i\le s$. Since we may suppose that $s>k+1$, we again obtain (\ref{3.10}) as a consequence of Lemma \ref{lemma3.2}. Next write
$$\cJ_i=\int_{-\infty}^\infty |f_i(\alpha)|^{2^{k-2}}|H(\alpha)|^2K_1(\alpha)\d\alpha.$$
Since we may also suppose that $s-2^{k-3}\ge \tfrac{1}{2}s_1$, an application of Schwarz's inequality in combination with Lemmata \ref{4.1} and \ref{lemma3.3} leads to the bound
\begin{align*}
\int_{\fn_i}|f_i(\alpha)^sH(\alpha)K_*(\alpha)|\d\alpha &\le \cJ_i^{1/2}\cM_{i,2s-2^{k-2}}(\fn_i;K_2^*)^{1/2}\\
&\ll \left( P^{2^{k-2}}(P^{-1}Z+P^{\ep-k/2}Z^{3/2})\right)^{1/2}\left( P^{2s-2^{k-2}-2k}\Xi_\ep\right)^{1/2},
\end{align*}
where we have written
$$\Xi_\ep=P^{k-(2s-s_1-2^{k-2})(\sigma_1-\ep)}U(P)^{-1}.$$
Thus we obtain the bound
\begin{equation}\label{4.16}
\int_{\fn_i}|f_i(\alpha)^sH(\alpha)K_*(\alpha)|\d\alpha \ll P^{s-k}(P^{-1}Z+P^{\ep-k/2}Z^{3/2})^{1/2}\Xi_\ep^{1/2}.
\end{equation}

\par On substituting (\ref{3.10}) and (\ref{4.16}) into (\ref{3.9}), and thence into (\ref{3.8}), we deduce that
\begin{align*}
P^{s-k}\psi(N)^{-1}Z\ll P^{s-k}&\left(ZL(P)+\left(P^{-1/2}Z^{1/2}+P^{\ep-k/4}Z^{3/4}\right)\Xi_\ep^{1/2}\right)^{1-1/s}\\
&\,\times \left(ZL(P)^{1-3s}+\left(P^{-1/2}Z^{1/2}+P^{\ep-k/4}Z^{3/4}\right)\Xi_\ep^{1/2}\right)^{1/s}.
\end{align*}
This inequality may be disentangled to show that
$$Z\ll L(P)^{s-1}\psi(N)^s\left(P^{-1/2}Z^{1/2}+P^{\ep-k/4}Z^{3/4}\right)\Xi_\ep^{1/2}.$$
We may suppose that $\psi(N)$ is sufficiently small compared to $L(P)$, and that $L(P)\ll U(P)^\ep$. Further disentangling therefore shows that for any positive number $\sigma$ with $\sigma<\sigma_1$, one has
\begin{align*}
Z&\ll P^{-1}L(P)^{2s-2}\psi(N)^{2s}\Xi_\ep+P^{\ep-k}L(P)^{4s-4}\psi(N)^{4s}\Xi_\ep^2\\
&\ll P^{k-1-(2s-s_1-2^{k-2})\sigma}U(P)^{\ep-1}+P^{k-2(2s-s_1-2^{k-2})\sigma+\ep}U(P)^{-2}.
\end{align*}
On recalling that $P=N^{1/k}$, the proof of the first estimate of Theorem \ref{theorem4.3} is complete.\par

The second estimate of the theorem follows in like manner, once one substitutes $\frac{3}{16}2^k$ for $2^{k-3}$, and Lemma \ref{lemma4.2} with $j=k-2$ for Lemma \ref{lemma4.1}, throughout. Thus one obtains
\begin{align*}
P^{s-k}\psi(N)^{-1}Z\ll P^{s-k}&\left(ZL(P)+\left(P^{-1}Z^{1/2}+P^{\ep-(k-1)/2}Z\right)\Xi_\ep^{1/2}\right)^{1-1/s}\\
&\,\times \left(ZL(P)^{1-3s}+\left(P^{-1}Z^{1/2}+P^{\ep-(k-1)/2}Z\right)\Xi_\ep^{1/2}\right)^{1/s},
\end{align*}
where now
$$\Xi_\ep=P^{k-(2s-s_1-\frac{3}{8}2^k)(\sigma_1-\ep)}U(P)^{-1}.$$
We therefore conclude that whenever $(2s-s_1-\tfrac{3}{8}2^k)\sigma_1>1$ and $\sigma$ is a positive number with $\sigma<\sigma_1$, then
$$Z\ll P^{k-2-(2s-s_1-\frac{3}{8}2^k)\sigma}U(P)^{-1}.$$
The proof of the second estimate of the theorem therefore follows again from the relation $P=N^{1/k}$.
\end{proof}

The conclusion of Theorem \ref{theorem1.6} follows directly from Theorem \ref{theorem4.3} on making use of the accessible triples recorded in the preamble to the statement of the former theorem. For all but the last column in the table, one makes use of the first estimate supplied by Theorem \ref{theorem4.3}, and for the last column one applies the second estimate.

\section{The asymptotic formula in short intervals}\label{sect:5}
The key idea of the previous section, in which some of the excess variables were allocated to mean values involving $|H_{\eta,\cZ}(\alpha)|^2$, can also be used to produce short interval results. In this section, we establish Theorem \ref{theorem1.7} by adjusting the analysis of Section \ref{sect:3} to allow for an application of Lemma \ref{lemma2.2} in place of Lemma \ref{lemma2.1}.\par

Unless indicated otherwise, we adopt the notation of \S3. Recall the hypotheses of the statement of Theorem \ref{theorem1.7}, and suppose in particular that $(s_1,\sigma_1,U)$ forms an accessible triple for $k$. Let $\cZ=\ctZ_{s+1,k}^\tau(N,M;\psi;\L)$, and write
$$\lambda=\max_{1\le i\le s+1}|\lambda_i|\quad \text{and}\quad c=(2(s+1)\lambda)^{-1}.$$
We begin by observing that in our proof of Theorem \ref{theorem1.7}, it clearly suffices to show that whenever $M\le \lambda (k-1)(cP)^{k-1}$, one has
$${\rm{meas}}(\cZ)\ll MP^{-(2s-s_1)(\sigma_1-\ep)}U(P)^{\ep-1}.$$
In order to confirm this statement, we observe that if $M>\lambda(k-1)(cP)^{k-1}$, then we may divide the interval $[N,N+M]$ into $O(MP^{1-k})$ intervals of length at most $\lambda (k-1)(cP)^{k-1}$ on which the former bound can be applied. Summing the contributions from all such intervals, the desired bound for the exceptional set follows on noting the trivial upper bound $\lambda(k-1)(cP)^{k-1}<N^{1-1/k}$ that follows on recalling that $P=N^{1/k}$.\par

We now launch the proof of Theorem \ref{theorem1.7} in earnest. Suppose that $\mu$ lies in the interval $[P^k,P^k+M]$ and that the integers $x_1,\ldots ,x_{s+1}$ satisfy
$$|\lambda_1x_1^k+\dots +\lambda_{s+1}x_{s+1}^k-\mu|<\tau +\delta.$$
Then one has
$$\max_{1\le i\le s+1}|x_i|>(2(s+1)\lambda)^{-1/k}P\ge cP.$$
Write $\fg_i(\alpha)$ for $f(\lambda_i\alpha;cP,P)$ and $\fh_i(\alpha)$ for $f(\lambda_i\alpha;0,P)$, in the notation of (\ref{2.1}). Then we deduce from (\ref{2.3}) and (\ref{2.4}) that
\begin{equation}\label{5.1}
\int_{-\infty}^\infty \prod_{i=1}^{s+1}(\fh_i(\alpha)-\fg_i(\alpha))e(-\alpha\mu)K_{\pm}(\alpha)\d\alpha=0.
\end{equation}
Next, on writing
$$\tfh(\alpha)=\prod_{i=1}^{s+1}\fh_i(\alpha),$$
one finds that (\ref{2.3}) and (\ref{2.4}) yield the inequalities
$$\int_{-\infty}^\infty \tfh(\alpha)e(-\alpha\mu)K_-(\alpha)\d\alpha \le \cN_{s+1,k}^\tau (P;\L,\mu)\le \int_{-\infty}^\infty \tfh(\alpha)e(-\alpha\mu)K_+(\alpha)\d\alpha.$$
It therefore follows by subtracting (\ref{5.1}) that
$$\sum_{\fJ \neq \emptyset}(-1)^{|\fJ|+1}\cR_\fJ^-(\mu;\R)\le \cN_{s+1,k}^\tau(P;\L,\mu)\le \sum_{\fJ \neq \emptyset}(-1)^{|\fJ|+1}\cR_\fJ^+(\mu;\R),$$
where the summations are over subsets $\fJ$ of $\{1, \dots, s+1\}$, and where
$$\cR_\fJ^\pm (\mu;\fB)=\int_\fB \prod_{i\in \fJ}\fg_i(\alpha)\prod_{j\not \in \fJ}\fh_j(\alpha)e(-\alpha\mu)K_\pm(\alpha)\d\alpha.$$

\par Applying the analysis leading to \cite[Lemma 6.1]{W:FAF}, as in the argument following Br\"udern and Wooley \cite[equation (6.6)]{BW:04short}, we find that whenever $s\ge k$, one has
$$\sum_{\fJ \neq \emptyset}(-1)^{|\fJ|+1}\cR_\fJ^\pm (\mu;\fM)=2\tau\Omega_{s,k}(\L,\mu/N)P^{s+1-k}+O(P^{s+1-k}L(P)^{-1}).$$
Thus if $\mu \in \cZ$, then with $\cR_{\fJ}^\mu$ denoting either $\cR_\fJ^+$ or $\cR_\fJ^-$, one has the relation
$$\sum_{\fJ \neq \emptyset}|\cR_\fJ^\mu(\mu;\fm \cup \ft)|>\tfrac{1}{2}P^{s+1-k}\psi(N)^{-1},$$
whence for some $\fJ\ne \emptyset$ one has
\begin{equation}\label{5.2}
|\cR_\fJ^\mu(\mu;\fm \cup \ft)|>2^{-s-2}P^{s+1-k}\psi(N)^{-1}.
\end{equation}
As in the proof of Theorem \ref{theorem1.4} from \S3, with $*$ equal to $+$ or $-$, we denote by $\cZ_\fJ^*$ the set of $\mu \in \cZ$ for which (\ref{5.2}) holds with $\cR_\fJ^\mu=\cR_\fJ^*$, and we write $Z_\fJ^*=\text{meas}(\cZ_\fJ^*)$. It follows that for some choices of $*$ and $\fJ$, one has $Z\le 2^{s+2}Z_\fJ^*$. We fix these choices of $*$ and $\fJ$ henceforth. For each $\mu \in \cZ_\fJ^*$, we then determine the complex number $\eta_\mu$ of modulus $1$ by means of the relation
$$|\cR_\fJ^*(\mu;\fm \cup \ft)|=\eta_\mu \cR_\fJ^*(\mu;\fm \cup \ft).$$
Integrating (\ref{5.2}) over $\cZ^*_\fJ$ gives the upper bound
\begin{equation}\label{afshortsetup}
P^{s+1-k}\psi(N)^{-1}Z\ll \int_{\fm \cup \ft}\left|\prod_{i \in \fJ}\fg_i(\alpha)\prod_{j \not \in \fJ}\fh_j(\alpha)H(\alpha)K_*(\alpha)\right|\d\alpha,
\end{equation}
where in the notation introduced in (\ref{2.10}), we have written $H(\alpha)=H_{\eta,\cZ_\fJ^*}(\alpha)$.\par

Suppose first that there exists an index $i\in \fJ$ with $i\not\in \{1,2\}$. In this situation, by relabelling indices if necessary, there is no loss of generality in supposing that $s+1\in \fJ$. We economise on exposition by writing
$$f_i(\alpha)=\begin{cases}\fg_i(\alpha),&\text{when $i\in \fJ$,}\\
\fh_i(\alpha),&\text{when $i\not\in \fJ$.}\end{cases}$$
We then put
$$\tf(\alpha)=f_1(\alpha)\cdots f_s(\alpha)\quad \text{and}\quad H^\dagger(\alpha)=f_{s+1}(\alpha)H(\alpha),$$
and define
\begin{equation}\label{5.4}
I(\fB)=\int_\fB |\tf(\alpha)H^\dagger(\alpha)K_*(\alpha)|\d\alpha.
\end{equation}
In this way we find that (\ref{afshortsetup}) may be rewritten in the shape
$$I(\fp_1)+I(\fp_2)\gg P^{s-k}\psi(N)^{-1}Z^\dagger,$$
where $Z^\dagger=PZ$. Next define
$$\cJ^\dagger =\int_{-\infty}^\infty |H^\dagger(\alpha)|^2K_1(\alpha)\d\alpha,$$
and note that Lemma \ref{lemma2.2} supplies the estimate $\cJ^\dagger\le 2PZ=2Z^\dagger$. Recognising that the argument of the proof of Theorem \ref{theorem1.4} leading from (\ref{3.8}) to (\ref{3.14}) may be repeated essentially verbatim, save that $H(\alpha)$, $\cJ$ and $Z$ are to be decorated throughout by obelisks, the estimation of $I(\fp_j)$ may be completed without incident for $j=1,2$. In this way, we may conclude that whenever $\psi(N)$ grows sufficiently slowly and $\sigma$ is any positive number with $\sigma<\sigma_1$, then
$$PZ=Z^\dagger \ll P^{k-(2s-s_1)\sigma }U(P)^{\ep-1},$$
and the proof of Theorem \ref{theorem1.7} is completed by recalling that $P=N^{1/k}$.\par

It remains to deal with the situation where $\fJ\subseteq\{1,2\}$. In this case, there is no loss of generality in supposing that $1\in \fJ$. We then put
$$\tf(\alpha)=f_2(\alpha)\cdots f_{s+1}(\alpha)\quad \text{and}\quad H^\dagger(\alpha)=f_1(\alpha)H(\alpha),$$
and define $I(\fB)$ again as in (\ref{5.4}). In the current situation, one has
$$\sup_{\alpha\in\fp_1}|H^\dagger(\alpha)|\le Z\sup_{\alpha \in \fp_1}|f_1(\alpha)|\ll Z^\dagger L(P)^{-3s},$$
and hence for $1\le i\le s+1$ we obtain the estimate
\begin{align*}
\int_{\fN_i\cap\fp_1}|f_i(\alpha)^sH^\dagger(\alpha)K_*(\alpha)|\d\alpha &\ll Z^\dagger L(P)^{-3s}\cM_{i,s}(\fN_i\cap\fp_1;|K_*|)\\
&\ll Z^\dagger P^{s-k}L(P)^{1-3s}
\end{align*}
as a substitute for (\ref{3.10}). In all other respects, the argument outlined in the previous paragraph remains valid following a transparent relabelling of variables, and thus the conclusion of Theorem \ref{theorem1.7} follows even in this case.\par

\section{Exceptions to solubility}\label{sect:6}
If one is interested only in sets on which a solution to (\ref{1.1}) fails to exist, rather than sets on which the asymptotic formula fails, techniques involving diminishing ranges and smooth numbers offer additional flexibility in the analysis. Diminishing ranges allow for higher-moment generalizations of Lemma \ref{lemma2.2}, and the resulting diagonal behavior permits shorter intervals than those discussed in Theorem \ref{theorem1.7}. Furthermore, the use of smooth numbers reduces the number of variables required to obtain best-possible mean-value estimates for the corresponding exponential sums over $k$th powers.\par

We let $N$ be a large positive number, set $P=N^{1/k}$, and put
$$\lambda=\max_{1\le i\le s+t}|\lambda_i|.$$
We are again free to suppose that $\lambda_1/\lambda_2\not\in \Q$. We take $R=P^\eta$ with $\eta>0$ sufficiently small in terms of the ambient parameters. Suppose that $(s_0,\sigma_0)$ forms a smooth accessible pair for $k$, and put $S(P)=(\log P)^\nu$, wherein $\nu$ is a sufficiently small positive number. Define $g(\alpha)$ as in (\ref{1.3}) and $f(\alpha;Q,P)$ by means of (\ref{2.1}). Further, let $c$ be a fixed positive number sufficiently large in terms of $t$, $k$ and $\L$, and write
$$P_j=c^{-j}P^{(1-1/k)^{j-1}}, \quad g_j(\alpha)=g(\lambda_j\alpha)\quad \mbox{and}
\quad \ff_j(\alpha)=f(\lambda_{s+j}\alpha;P_j,2P_j) .$$
Let $\cZ=\cZ_{s+t,k}^\tau (N,M;\L)$ denote the set of real numbers $\mu\in [N,N+M]$ for which the inequality
$$|\lambda_1x_1^k+\cdots +\lambda_{s+t}x_{s+t}^k-\mu|<\tau$$
has no integer
solution, and write $H_\cZ(\alpha)$ for $H_{\eta,\cZ}(\alpha)$ when the function $\eta$ is identically 1. The following lemma provides a natural extension of Lemma \ref{lemma2.2}.

\begin{lemma}\label{lemma6.1}
Let $t$ be a positive integer, and suppose that $M\le P_t^{k-1}$. In addition, let $\cZ$ be a subset of $[N,N+M]$ of measure $Z$. Then
$$\int_{-\infty}^\infty |\ff_1(\alpha)\cdots \ff_t(\alpha)H_\cZ (\alpha)|^2K_1(\alpha)\d\alpha \le 2P_1\cdots P_tZ.$$
\end{lemma}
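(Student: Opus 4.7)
My plan is to extend the single-variable spacing argument used in the proof of Lemma \ref{lemma2.2} to all $t$ variables simultaneously, by exploiting the geometric decay built into the definition of the $P_j$. I would first rewrite $|H_\cZ(\alpha)|^2=\int_{\cZ^2}e(\alpha(\nu-\mu))\,\d\mu\,\d\nu$ and expand $|\ff_1(\alpha)\cdots\ff_t(\alpha)|^2$ as a sum over pairs $(\x,\y)$ with $P_j<x_j,y_j\le 2P_j$ for each $j$. Interchanging the summation and integration, the $\alpha$-integral produces $\widehat{K_1}$ evaluated at a linear combination, and the upper bound (\ref{2.9}) then yields
\[
\int_{-\infty}^\infty|\ff_1(\alpha)\cdots\ff_t(\alpha)H_\cZ(\alpha)|^2K_1(\alpha)\,\d\alpha \le \delta^{-1}\int_{\cZ^2}\cW(\mu,\nu)\,\d\mu\,\d\nu,
\]
where $\cW(\mu,\nu)$ denotes the number of integral tuples $(\x,\y)$ with $P_j<x_j,y_j\le 2P_j$ satisfying
\[
\Bigl|\sum_{j=1}^t\lambda_{s+j}(x_j^k-y_j^k)+\nu-\mu\Bigr|<\delta.
\]

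The heart of the argument is to show that every tuple counted by $\cW(\mu,\nu)$ must be diagonal, i.e.\ satisfy $\x=\y$. Supposing otherwise, I would let $j^*$ be the smallest index with $x_{j^*}\ne y_{j^*}$; the terms with $j<j^*$ vanish, and the mean value theorem gives
\[
|\lambda_{s+j^*}(x_{j^*}^k-y_{j^*}^k)|\ge k|\lambda_{s+j^*}|P_{j^*}^{k-1},
\]
since $|x_{j^*}-y_{j^*}|\ge 1$ and $\min(x_{j^*},y_{j^*})>P_{j^*}$. For $j>j^*$, a trivial bound yields $|\lambda_{s+j}(x_j^k-y_j^k)|\ll P_j^k$, while the defining relation $P_{j+1}^k=c^{-k-j}P_j^{k-1}$ together with the geometric decay $P_{j+1}\le c^{-1}P_j$ (valid for $P$ large) gives $\sum_{j>j^*}P_j^k\ll c^{-k}P_{j^*}^{k-1}$. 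Using the hypothesis $M\le P_t^{k-1}\le P_{j^*}^{k-1}$ together with $\delta\le 1$, a choice of $c$ sufficiently large in terms of $t$, $k$, and $\L$ forces the modulus on the left-hand side of the above inequality to exceed $\delta$, a contradiction.

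With $\cW(\mu,\nu)$ thus supported on the diagonal $\x=\y$, the defining inequality collapses to $|\nu-\mu|<\delta$, and the number of admissible diagonal tuples is at most $P_1\cdots P_t$. Substituting this bound gives
\[
\int_{-\infty}^\infty|\ff_1(\alpha)\cdots\ff_t(\alpha)H_\cZ(\alpha)|^2K_1(\alpha)\,\d\alpha\le \delta^{-1}P_1\cdots P_t\int_\cZ\int_{\nu-\delta}^{\nu+\delta}\d\mu\,\d\nu=2P_1\cdots P_tZ,
\]
as required. I expect the main obstacle to be in the second step, where some care is needed to ensure the leading term $k|\lambda_{s+j^*}|P_{j^*}^{k-1}$ strictly dominates the combined tail $\sum_{j>j^*}|\lambda_{s+j}(x_j^k-y_j^k)|$ together with the perturbation $|\mu-\nu|+\delta$; this is a standard but delicate diminishing-ranges bookkeeping which justifies the choice of the parameter $c$ in terms of $\L$.
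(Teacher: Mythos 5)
Your proposal is correct and follows essentially the same approach as the paper. The paper packages the "first differing index" argument as a descending induction on $m$, proving $\cW_m(\mu,\nu)\le P_m\cdots P_t U_\delta(\mu-\nu)$ for $m=t,t-1,\dots,1$, but the underlying mechanism — comparing $k|\lambda_{s+j^*}|P_{j^*}^{k-1}$ against the tail $2^k\lambda t P_{j^*+1}^k$ plus $|\mu-\nu|+\delta$, with $c$ chosen large in terms of $t$, $k$, $\L$ — is identical to yours.
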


\begin{proof} Denote by $\cW_m(\mu,\nu)$ the number of solutions of the inequality
$$\biggl|\sum_{j=m}^t\lambda_{s+j}(x_j^k-y_j^k)+\mu-\nu \biggr|<\delta,$$
with $P_j<x_j,y_j\le 2P_j$ $(m\le j\le t)$. Then as in the argument of the proof of Lemma \ref{lemma2.2}, one finds that
\begin{equation}\label{6.1}
\int_{-\infty}^\infty |\ff_1(\alpha)\cdots \ff_t(\alpha)H_\cZ (\alpha)|^2K_1(\alpha)\d\alpha \le \int_{\cZ^2} \delta^{-1}\cW_1(\mu,\nu)\d\mu \d\nu.
\end{equation}
We show by induction that when $\mu,\nu\in [N,N+M]$, then one has
\begin{equation}\label{6.2}
\cW_m(\mu,\nu)\le P_m\cdots P_tU_\delta (\mu-\nu)
\end{equation}
for $m=t,t-1,\ldots,1$.\par

The case $m=t$ follows from the argument of the proof of Lemma \ref{lemma2.2}, so we may now suppose that $1\le m<t$, and that
$$\cW_{m+1}(\mu,\nu)\le P_{m+1}\cdots P_tU_\delta (\mu-\nu).$$
If $\x,\y$ is a solution counted by $\cW_m (\mu,\nu)$ with $x_m\neq y_m$, then one has
$$|\lambda_{s+m}(x_m^k-y_m^k)|>k|\lambda_{s+m}|P_m^{k-1}=k|\lambda_{s+m}|c^{m+k}P_{m+1}^k.$$
Since we are at liberty to assume that $c$ is sufficiently large in terms of $t$, $k$ and $\L$, and $|\mu-\nu|\le P_t^{k-1}$, we arrive at the inequality
$$\biggl|\sum_{j=m+1}^t\lambda_{s+j}(x_j^k-y_j^k)+\mu-\nu \biggr|\le 2^k\lambda tP_{m+1}^k+P_t^{k-1}<|\lambda_{s+m}(x_m^k-y_m^k)|-\delta .$$
We are therefore forced to conclude that in fact $x_m=y_m$, and hence the inductive hypothesis gives
$$\cW_m(\mu,\nu)\le P_m\cW_{m+1}(\mu,\nu)\le P_m(P_{m+1}\cdots P_t)U_\delta (\mu-\nu).$$
We have therefore confirmed the inductive hypothesis (\ref{6.2}) for $1\le m\le t$.\par

Substituting the estimate (\ref{6.2}) with $m=1$ into (\ref{6.1}), we obtain
\begin{align*}\int_{-\infty}^\infty |\ff_1(\alpha)\cdots \ff_t(\alpha)H_\cZ (\alpha)|^2K_1(\alpha)\d\alpha &\le \delta^{-1}P_1\cdots P_t\int_\cZ\int_{\nu-\delta}^{\nu+\delta}\d\mu\d\nu \\
&=2P_1\cdots P_tZ.
\end{align*}
This completes the proof of the lemma.
\end{proof}

Next we record an analogue of Lemma \ref{lemma3.1}.

\begin{lemma}\label{lemma6.2} There exists a choice for the function $T(P)$, depending only on $\lambda_1$, $\lambda_2$ and $S(P)$, with the property that
$$\sup_{\alpha \in \fm}|g_1(\alpha)g_2(\alpha)| \ll P^2 T(P)^{-5^{-k}}.$$
\end{lemma}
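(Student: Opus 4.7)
The plan is to mirror the proof of Lemma \ref{lemma3.1}, substituting a smooth Weyl-type estimate in place of the classical Weyl inequality. Fix $\alpha \in \fm$ and apply Dirichlet's approximation theorem to obtain, for each $i \in \{1,2\}$, coprime integers $a_i,q_i$ with $1\le q_i\le Q$ and $|q_i\lambda_i\alpha-a_i|\le Q^{-1}$, where $Q = P^{k-\kappa}$ for a small $\kappa>0$. The two cases to distinguish are whether the approximations to $\lambda_1\alpha$ and $\lambda_2\alpha$ both lie in a shrunken set of ``major arcs'' within which a Vaughan-type smooth Weyl bound would fail.

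Suppose first that both pairs $(q_i, \lambda_i\alpha)$ satisfy $q_i \le Q_0$ together with $|q_i\lambda_i\alpha - a_i|\le Q_0 P^{-k}$, for a small parameter $Q_0$ to be specified. Eliminating $\alpha$ between the two approximations yields
$$\left|\frac{\lambda_1}{\lambda_2}-\frac{a_1 q_2}{a_2 q_1}\right|\ll \frac{Q_0}{|\alpha|\, q_1 q_2 P^k}.$$
Since $\lambda_1/\lambda_2 \notin \Q$, the denominators of its rational approximations satisfy a lower bound controlled by its continued fraction expansion. Choosing $T(P)$ to grow sufficiently slowly in terms of $\lambda_1/\lambda_2$ and $S(P)$, one forces $|\alpha|\le S(P)P^{-k}$, contradicting $\alpha \in \fm$. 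Thus for at least one index $i$, the approximation $(a_i,q_i)$ must fall outside this shrunken major-arc regime.

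For that index, a smooth Weyl-type estimate of the form $g_i(\alpha)\ll P^{1-5^{-k}+\ep}$ follows from the classical treatment of smooth sums over minor arcs (along the lines developed in \cite{V:HL} and refined in the sources feeding into \S\ref{sect:2}). Combining this with the trivial bound $|g_{3-i}(\alpha)|\le P$, one deduces
$$|g_1(\alpha)g_2(\alpha)|\ll P^2 \cdot P^{-5^{-k}+\ep} \ll P^2 T(P)^{-5^{-k}},$$
provided $T(P)$ is chosen to be at most a small fixed power of $P$ (so that $P^\ep$ may be absorbed into $T(P)^{-5^{-k}}$ after a slight reduction in the exponent, or equivalently by replacing $5^{-k}$ by a fractionally smaller admissible saving).

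The principal obstacle is parameter calibration: $Q_0$ and $T(P)$ must be balanced so that the irrationality of $\lambda_1/\lambda_2$ destroys Case 1 precisely when $|\alpha|>S(P)P^{-k}$, while still leaving enough room in Case 2 to extract a Weyl saving of size $5^{-k}$ from $\alpha$. Since no effective Diophantine condition on $\lambda_1/\lambda_2$ is assumed, $T(P)$ will tend to infinity only implicitly, at a rate dictated by the continued fraction convergents of $\lambda_1/\lambda_2$ and by $S(P)$; this is precisely the reason the statement allows $T(P)$ to depend on $\lambda_1,\lambda_2$ and $S(P)$.
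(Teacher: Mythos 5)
The paper disposes of this lemma in one line: since $\fm$ is defined exactly as in \cite{W:FAF}, the statement is a direct quotation of \cite[Lemma 8.1]{W:FAF}. Your outline is an attempt to reconstruct the internal argument of that lemma, and the structural skeleton you describe (rational approximation, elimination of $\alpha$, contradiction with irrationality of $\lambda_1/\lambda_2$ when both $\lambda_1\alpha$ and $\lambda_2\alpha$ land on shrunken major arcs) is indeed the Bentkus--G\"otze--Freeman dichotomy that underlies the cited result.

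However, your Case 2 has a genuine gap. You assert that if the Dirichlet approximation to $\lambda_i\alpha$ fails the shrunken major-arc test with parameter $Q_0$, then $g_i(\alpha)\ll P^{1-5^{-k}+\ep}$. This is false. Falling outside the shrunken set with a small parameter $Q_0$ says only that $q_i>Q_0$ or $|q_i\lambda_i\alpha-a_i|>Q_0P^{-k}$; it does not place $\lambda_i\alpha$ on classical Hardy--Littlewood minor arcs. If, say, $q_i$ lies in the intermediate range $Q_0<q_i\le (2k)^{-1}P$ with $|q_i\lambda_i\alpha-a_i|$ very small, then $g_i(\alpha)$ can be as large as $Pq_i^{-1/k+\ep}$, which is nowhere near $P^{1-5^{-k}+\ep}$ for $Q_0$ of modest size. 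The correct Case 2 output in the BGF framework is a saving expressed in terms of the shrinking parameter (something of the shape $g_i(\alpha)\ll P\,Q_0^{-c}$, eventually tied to $T(P)$), not a fixed power of $P$. Your final absorption step, in which a $P$-power saving is converted to a $T(P)$-power saving because $T(P)$ is at most a small power of $P$, papers over precisely this error: if the $P$-power saving were really available uniformly across Case 2, the whole BGF apparatus would be superfluous.

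There is a second, independent issue: you attribute the pointwise bound, and in particular the constant $5^{-k}$, to ``the classical treatment of smooth sums over minor arcs (along the lines developed in \cite{V:HL}).'' Pointwise minor-arc estimates for $g(\alpha)=\sum_{x\in\cA(P,R)}e(\alpha x^k)$ are not classical and do not follow from Weyl differencing, since $\cA(P,R)$ is not an interval. Sharp pointwise bounds for smooth Weyl sums come from entirely different (bilinear, amplification) techniques; indeed the paper's own Table 1 records much better savings $\sigma(k)$ than $5^{-k}$. The specific constant $5^{-k}$ is an artefact of the particular dichotomy and major-arc approximation used in \cite[Lemma 8.1]{W:FAF} (drawing on estimates such as \cite[Lemma 8.5]{VW:91}), not a generic smooth Weyl exponent. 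If you wish to give a self-contained proof rather than a citation, you must (i) state the dichotomy with the shrinking parameter tied to $T(P)$ so that Case 2 delivers a $T(P)$-saving, and (ii) supply or cite a genuine pointwise major/minor arc estimate for smooth Weyl sums with a specified saving, calibrated to produce $T(P)^{-5^{-k}}$.
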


\begin{proof} In view of our definition of the minor arcs $\fm$, the desired conclusion is immediate from \cite[Lemma 8.1]{W:FAF}.
\end{proof}

It follows from the conclusion of Lemma \ref{lemma6.2} that for all $\alpha\in \fm$, one has
\begin{equation}\label{6.3}
g_j(\alpha)\ll PT(P)^{-10^{-k}}
\end{equation}
for at least one suffix $j\in\{1,2\}$. We define $\fq_j$ to be the set of real numbers $\alpha \in \fm$ for which the upper bound (\ref{6.3}) holds, and then put $\fp_j=\fq_j\cup \ft$, so that $\fm\cup \ft\subseteq \fp_1\cup \fp_2$. In the interests of concision, we define $\upsilon_{ij}$ just as in the discussion of \S3 following (\ref{3.1}).

\par We must introduce some additional notation before announcing an auxiliary mean value estimate. When $1\le i\le s$, the set $\fB$ is measurable, and $K$ is integrable, we write
$$\cM^*_{i,t}(\fB;K)=\int_\fB|g_i(\alpha)|^tK(\alpha)\d\alpha .$$

\begin{lemma}\label{lemma6.3}
Suppose that $k\ge 3$, $t>\max\{2k+2,16\}$, $1\le i\le s$ and $j\in\{1,2\}$. Then for any fixed $\kappa>0$, one has
$$\cM^*_{i,t}(\fN_i\cap \fp_j;|K_\pm|)\ll_t P^{t-k}L(P)^{1-\kappa s\upsilon_{ij}}.$$
\end{lemma}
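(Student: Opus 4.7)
The plan is to mirror the architecture of the proof of Lemma \ref{lemma3.2}, replacing the classical Weyl sum $f_i$ by the smooth Weyl sum $g_i$ and substituting the appropriate major arc mean-value bound for smooth sums. The key input I would take as a black box is the estimate
$$\int_{\fN \cap [0,1)}|g(\alpha)|^u\d\alpha \ll P^{u-k},$$
valid for $u > \max\{2k+2,17\}$ (with a small margin), drawn from the works \cite{VW:95, VW:00, W:92war, W:95sws} cited in the preamble of Theorem \ref{theorem1.2}. The two thresholds $2k+2$ and $16$ correspond respectively to the number of variables required for the standard singular integral to converge absolutely and to a uniform bound effective for small $k$; under the hypothesis $t>\max\{2k+2,16\}$ we are free to choose $u$ just above the threshold with room to spare.

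Write $t=u+2\gamma$, with $u$ just over $\max\{2k+2,16\}$ and $\gamma>0$ small, and decompose
$$\fN_i\cap \fp_j=(\fN_i\cap \fq_j)\cup (\fN_i\cap \ft).$$
On $\fN_i\cap \fq_j$, I would extract from (\ref{6.3}) the pointwise saving
$$\sup_{\alpha \in \fq_j}|g_i(\alpha)|^{2\gamma}\ll \left(PT(P)^{-10^{-k}\upsilon_{ij}}\right)^{2\gamma},$$
then, invoking the bound $|K_\pm(\alpha)|\ll L(P)\min\{1,\alpha^{-2}\}$ (which follows from (\ref{2.5}) and $\delta^{-1}\ll L(P)$), apply Lemma \ref{lemma2.3} to reduce the remaining integral of $|g_i|^u$ to an integral over $\fN\cap [0,1)$. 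This delivers
$$\int_{\fN_i\cap \fq_j}|g_i(\alpha)|^t|K_\pm(\alpha)|\d\alpha \ll L(P)P^{t-k}T(P)^{-c_k\gamma \upsilon_{ij}},$$
for some positive $c_k$ depending only on $k$.

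For the trivial arc contribution $\fN_i\cap \ft$, a direct application of Lemma \ref{lemma2.3} with $X=T(P)$ yields
$$\int_{\fN_i\cap \ft}|g_i(\alpha)|^t|K_\pm(\alpha)|\d\alpha \ll L(P)T(P)^{-1}\int_{\fN\cap [0,1)}|g(\alpha)|^t\d\alpha \ll L(P)T(P)^{-1}P^{t-k},$$
which is acceptable in both cases $\upsilon_{ij}\in\{0,1\}$. Adding the two contributions, and recalling that $L(P)=\log T(P)$ so that any fixed negative power of $T(P)$ absorbs an arbitrary power of $L(P)$, we obtain the asserted estimate: in the case $\upsilon_{ij}=0$ we get the harmless bound $\ll P^{t-k}L(P)$, while when $\upsilon_{ij}=1$ the factor $T(P)^{-c_k\gamma}$ dominates $L(P)^{\kappa s}$ for any fixed $\kappa$.

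The principal obstacle I anticipate is confirming the major arc mean-value bound at the precise threshold $\max\{2k+2,16\}$; cross-checking that the explicit formulations in \cite{VW:95, VW:00, W:92war, W:95sws} supply this bound uniformly across the range of $k$ under consideration is the only delicate point. Once this input is secured, everything else is a transcription of the argument used for Lemma \ref{lemma3.2}, with the kernel bound on $|K_\pm|$ accounting for the single factor $L(P)$ present in the conclusion.
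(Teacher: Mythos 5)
The architecture of your argument---split $\fN_i\cap\fp_j$ into $\fN_i\cap\fq_j$ and $\fN_i\cap\ft$, pull out a pointwise saving from (\ref{6.3}), reduce to a unit-interval integral via Lemma \ref{lemma2.3}, and use the kernel bound $|K_\pm|\ll L(P)\min\{1,\alpha^{-2}\}$---is exactly right and matches the paper. However, the one ingredient you declare a ``black box'' is in fact the technical heart of the lemma, and it is not available from the sources you cite.

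The bound
$$\int_{\fN\cap [0,1)}|g(\alpha)|^u\d\alpha \ll P^{u-k},\quad u\approx \max\{2k+2,16\},$$
does not follow from the mean-value literature \cite{VW:95, VW:00, W:92war, W:95sws}. Those papers give estimates over the whole unit interval (where, for $u$ below $s_0(k)$, one cannot hope for the exponent $u-k$) and minor-arc bounds---neither of which yields a major-arc restriction at this small number of variables. For classical Weyl sums $f$, the major-arc pointwise bound $(q+N|q\alpha-a|)^{-1/k}$ makes an analogous estimate immediate once $u>k+1$ (this is the mechanism behind Lemma \ref{lemma3.2}). For smooth Weyl sums $g$, the available pointwise major-arc estimate is weaker: the paper uses \cite[Lemma 5.4]{PW:02} to obtain something of the shape $g(\alpha)\ll P^{7/8+\ep}+P(\log P)^3 q^\ep\Psi(\alpha)^{1/(2k)}$, and then upgrades to the stronger $q^\ep P(q+|q\alpha-a|P^k)^{-1/k}$ only on a much narrower set of arcs via \cite[Lemma 8.5]{VW:91}. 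The exponent $1/(2k)$ (rather than $1/k$) is precisely why $2k+2$ replaces $k+1$ in the threshold, and the auxiliary term $P^{7/8+\ep}$ forces $u>16$ so that $(7/8)u+2<u$. Closing this argument then requires a sharpened version of Br\"udern's pruning lemma---worked out in full in the paper's Appendix (Lemma \ref{lemma11.1})---to integrate $|g(\beta)|^2\Psi(\beta)^{1+\gamma/(4k+\gamma)}$ over $\fN$ without losing powers of $\log P$. Your proposal omits all of this, so it is not a complete proof; if you wish to treat the major-arc input as known, you would need to locate it in the literature, and I do not believe it is stated there in a form covering the full range $u$ just above $\max\{2k+2,16\}$ and all $k\ge 3$.
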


\begin{proof} We may suppose that $t=2k+2+2\gamma$, where $\gamma>0$. We put
$$u=2k+2+\gamma\quad \text{and}\quad A=6k(8k+\gamma)/\gamma.$$
Also, we define the function $\Psi(\alpha)$ by taking $\Psi(\alpha)=(q+P^k|q\alpha-a|)^{-1}$, when $\alpha \in \fN(q,a) \subseteq \fN$, and otherwise by putting $\Psi(\alpha)=0$. Then an application of \cite[Lemma 5.4]{PW:02} with $\cM=P^{3/4}$ and $T=2P^{1/4}$ shows that when $\alpha \in \fN(q,a)\subseteq \fN$, one has the estimate
$$g(\alpha)\ll P^{7/8+\ep}+ P(\log P)^3q^\ep \Psi(\alpha)^{1/(2k)}.$$
Next we observe that whenever $q+|q\alpha-a|P^k \le (\log P)^A$, then one may apply \cite[Lemma 8.5]{VW:91} to deduce that
$$g(\alpha)\ll q^\ep P(q+|q\alpha-a|P^k)^{-1/k}.$$
By combining these estimates, therefore, we conclude that when $\alpha \in \fN$ one has
$$g(\alpha)\ll P^{7/8+\ep}+P\Psi(\alpha)^{2/(4k+\gamma)}.$$

\par Next, since $\text{meas}(\fN)\ll P^{2-k}$, it follows by a change of variable that the last estimate delivers the bound
$$\int_{\fN_i\cap [0,1)}|g_i(\alpha)|^u\d\alpha \ll (P^{7/8+\ep})^uP^{2-k}+P^{2k+\gamma}\int_{\fN\cap [0,1)}|g(\beta)|^2\Psi(\beta)^{1+\gamma/(4k+\gamma)}\d\beta.$$
We have $\frac{7}{8}u+2<u$ whenever $u>16$, and thus the hypotheses of the statement of the lemma imply that the first term on the right hand side is $o(P^{u-k})$. On the other hand, a straightforward modification of the proof of \cite[Lemma 2]{Bru:88} (see Lemma \ref{lemma11.1} below) shows that
$$\int_{\fN \cap [0,1)}|g(\beta)|^2\Psi(\beta)^{1+\gamma/(4k+\gamma)}\d\beta \ll P^{2-k}.$$
We therefore conclude that
$$\int_{\fN_i\cap [0,1)}|g_i(\alpha)|^u\d\alpha \ll P^{u-k}.$$

\par We recall from (\ref{2.5}) that $K_\pm(\alpha)\ll L(P)\min\{1,\alpha^{-2}\}$. Then in view of the conclusion of Lemma \ref{lemma2.3} together with (\ref{6.3}), we have on the one hand
\begin{align*}
\int_{\fN_i\cap \fq_j}|g_i(\alpha)|^t|K_\pm(\alpha)|\d\alpha &\ll \Bigl(\sup_{\alpha\in\fq_j}|g_i(\alpha)|\Bigr)^\gamma L(P)\int_{\fN_i\cap [0,1)}|g_i(\alpha)|^u\d\alpha \\
&\ll \left(PT(P)^{-\upsilon_{ij}10^{-k}}\right)^\gamma L(P)P^{u-k}\\
&\ll P^{t-k}L(P)^{1-\kappa s\upsilon_{ij}},
\end{align*}
whilst on the other
\begin{align*}
\int_{\fN_i\cap \ft}|g_i(\alpha)|^t|K_\pm(\alpha)|\d\alpha &\ll g(0)^\gamma L(P)T(P)^{-1}\int_{\fN_i\cap [0,1)}|g_i(\alpha)|^u\d\alpha \\
&\ll (P^\gamma L(P)^{-\kappa s})P^{u-k}=P^{t-k}L(P)^{-\kappa s}.
\end{align*}
The conclusion of the lemma follows by combining these two upper bounds.
\end{proof}

\begin{lemma}\label{lemma6.4}
Suppose that $(s_0,\sigma_0)$ is a smooth accessible pair for $k$, and that $u$ is a real number with $u\ge s_0$. Then for $1\le i\le s$, there is a positive number $\omega$ with the property that
$$\cM^*_{i,u}(\fn_i;K_2^\pm)\ll P^{u-k-\omega -(u-s_0)\sigma_0+\ep}.$$
\end{lemma}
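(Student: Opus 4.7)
The plan is the familiar device of trading excess variables in a Weyl-type sum against a minor-arc mean value. Factor out $u - s_0 \ge 0$ copies of $g_i$ via the trivial estimate
\[
\cM^*_{i,u}(\fn_i;K_2^\pm)\le \Bigl(\sup_{\alpha\in\fn_i}|g_i(\alpha)|\Bigr)^{u-s_0}\cM^*_{i,s_0}(\fn_i;K_2^\pm),
\]
so the task reduces to (i) a pointwise bound on $g_i$ over $\fn_i$, and (ii) a bound on the reduced mean value $\cM^*_{i,s_0}(\fn_i;K_2^\pm)$, each inherited from the hypothesis that $(s_0,\sigma_0)$ is a smooth accessible pair.

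For (i), the definition of $\fn_i$ arranges that $\lambda_i\alpha\in\fn\pmod 1$ whenever $\alpha\in\fn_i$. Since $g$ has period $1$, the second half of (\ref{1.4}) yields
\[
\sup_{\alpha\in\fn_i}|g_i(\alpha)|=\sup_{\alpha\in\fn_i}|g(\lambda_i\alpha)|\ll P^{1-\sigma_0+\ep}.
\]

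For (ii), recall from (\ref{2.8}) that $K_2^\pm(\alpha)\ll\min\{1,\alpha^{-2}\}$, so the kernel satisfies the hypothesis of Lemma \ref{lemma2.3}. Since $\fn_i$ is contained in the set $\fB_{\lambda_i}(0)$ corresponding to $\fB=\fn\cap[0,1)$, applying Lemma \ref{lemma2.3} with $h=g$, $t=s_0$, $X=0$, $\lambda=\lambda_i$ and then invoking the first half of (\ref{1.4}) gives
\[
\cM^*_{i,s_0}(\fn_i;K_2^\pm)\ll \int_{\fn\cap[0,1)}|g(\alpha)|^{s_0}\,\d\alpha\ll P^{s_0-k-\omega}
\]
for the positive $\omega$ supplied by the accessible pair hypothesis.

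Combining the two estimates,
\[
\cM^*_{i,u}(\fn_i;K_2^\pm)\ll \bigl(P^{1-\sigma_0+\ep}\bigr)^{u-s_0}P^{s_0-k-\omega}\ll P^{u-k-\omega-(u-s_0)\sigma_0+\ep(u-s_0)},
\]
and since $\ep>0$ is arbitrary this is the claimed bound. There is no real obstacle here: everything is bookkeeping once Lemma \ref{lemma2.3} is recognised as the vehicle that converts the $\fn_i$-integral on $\R$ (with the decaying kernel $K_2^\pm$) into the $\fn\cap[0,1)$-integral appearing in the definition of a smooth accessible pair. The one point requiring a moment's care is verifying that the periodicity of $g$ is what permits the pointwise bound on $\fn_i$ to be read directly from the supremum bound posited on $\fn\cap[0,1)$.
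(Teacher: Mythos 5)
Your proof is correct and follows essentially the same route as the paper's one-line proof, which simply remarks that the estimate ``follows by inserting the hypothesised bounds (\ref{1.4}) into the conclusion of Lemma \ref{lemma2.3}.'' Peeling off $u-s_0$ factors of $g_i$ via the supremum bound in (\ref{1.4}), and then using Lemma \ref{lemma2.3} with the kernel bound $K_2^\pm(\alpha)\ll\min\{1,\alpha^{-2}\}$ from (\ref{2.8}) to transfer the remaining $\fn_i$-integral to $\int_{\fn\cap[0,1)}|g(\alpha)|^{s_0}\,\d\alpha$, is exactly the bookkeeping the authors intend.
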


\begin{proof} On noting (\ref{2.8}), the desired estimate follows by inserting the hypothesised bounds (\ref{1.4}) into the conclusion of Lemma \ref{lemma2.3}.\end{proof}

Before coming to grips with the proof of Theorem \ref{theorem1.1}, we introduce some additional notation. Write
$$\cZ=\cZ_{s,k}^\tau(N,M;\L),\quad Z={\rm{meas}}(\cZ)\quad \text{and}\quad H(\alpha)=H_\cZ(\alpha).$$
We define
$$\tg(\alpha)=g_1(\alpha) \cdots g_s(\alpha), \quad \tff(\alpha)=\ff_1(\alpha) \cdots \ff_t(\alpha),$$
$$H^\dagger(\alpha)=\tff(\alpha)H(\alpha)\quad \text{and}\quad Z^\dagger=\tff(0)Z.$$
Also, we put
$$\cJ_1=\int_{-\infty}^\infty |\tff(\alpha)H(\alpha)|^2K_1(\alpha)\d\alpha ,$$
and when $\fB$ is measurable, we write
$$I_1(\fB)=\int_\fB|\tff(\alpha)\tg(\alpha)H(\alpha)K_-(\alpha)|\d\alpha.$$

As in the proof of Theorem \ref{theorem1.7}, by subdividing the interval $[N,N+M]$ into subintervals of length $P_t^{k-1}$, there is no loss of generality in supposing that $M=P_t^{k-1}$. If $\mu\in \cZ$, then it follows from (\ref{2.3}) that
\begin{equation}\label{6.4}
\int_{-\infty}^\infty \tff(\alpha)\tg(\alpha)e(-\alpha \mu)K_-(\alpha)\d\alpha=0.
\end{equation}
When $s\ge k+1$, moreover, the analysis leading to \cite[Lemma 9.4]{W:FAF} is easily modified to confirm that, uniformly in $\mu\in [N,N+M]$, one has
$$\int_\fM \tff(\alpha)\tg(\alpha)e(-\alpha \mu)K_-(\alpha)\d\alpha \gg \tau \tff(0)P^{s-k}.$$
Then by subtracting (\ref{6.4}) and integrating over all $\mu \in \cZ$, we deduce that
\begin{equation}\label{6.5}
I_1(\fp_1)+I_1(\fp_2)\gg \tau P^{s-k}Z^\dagger.
\end{equation}

\par Let $j$ be either $1$ or $2$. Then an application of H\"older's inequality reveals that
\begin{equation}\label{6.6}
I_1(\fp_j)\le \prod_{i=1}^s\Bigl( \int_{\fp_j}|g_i(\alpha)^sH^\dagger(\alpha)K_-(\alpha)|\d\alpha \Bigr)^{1/s}.
\end{equation}
Consider an index $i$ with $1\le i\le s$. In view of the hypotheses of the statement of Theorem \ref{theorem1.1}, we may suppose that $s>\max\{2k+2,16\}$, and thus we deduce from Lemma \ref{lemma6.3} that
\begin{align}
\int_{\fN_i\cap \fp_j}|g_i(\alpha)^sH^\dagger(\alpha)K_-(\alpha)|\d\alpha &\le H^\dagger(0)\cM_{i,s}^*(\fN_i\cap \fp_j;|K_-|)\notag \\
&\ll Z^\dagger P^{s-k}L(P)^{1-3s\upsilon_{ij}}.\label{6.7}
\end{align}
We may also suppose that $s\ge \tfrac{1}{2}s_0$, and so an application of Schwarz's inequality in combination with Lemmata \ref{lemma6.1} and \ref{lemma6.4} leads to the bound
\begin{align}
\int_{\fn_i}|g_i(\alpha)^sH^\dagger(\alpha)K_-(\alpha)|\d\alpha &\le \cJ_1^{1/2}\cM^*_{i,2s}(\fn_i;K_2^-)^{1/2}\notag \\
&\ll (Z^\dagger)^{1/2}(P^{2s-2k}\Xi_\ep)^{1/2},\label{6.8}
\end{align}
where we write
\begin{equation}\label{6.9}
\Xi_\ep=P^{k-\omega-(2s-s_0)\sigma_0+\ep}.
\end{equation}

\par On substituting (\ref{6.7}) and (\ref{6.8}) into (\ref{6.6}), we deduce that
\begin{align*}
I_1(\fp_1)+I_1(\fp_2)\ll &\,P^{s-k}\left(Z^\dagger L(P)+(Z^\dagger)^{1/2}\Xi_\ep^{1/2}\right)^{1-1/s}\\
&\,\times \left(Z^\dagger L(P)^{1-3s}+(Z^\dagger)^{1/2}\Xi_\ep^{1/2}\right)^{1/s}.
\end{align*}
Substituting this bound into (\ref{6.5}), we find that
$$P^{s-k}Z^\dagger\ll P^{s-k}L(P)^{-3}\left( Z^\dagger L(P)+P^\ep \Xi_\ep^{1/2}(Z^\dagger)^{1/2}\right).$$
Note that
$$\sum_{j=1}^t(1-1/k)^{j-1}=k-k(1-1/k)^t,$$
and hence $\tff(0) \gg P^kP_t^{1-k}=P^kM^{-1}$. Then on disentangling the last inequality, we conclude from (\ref{6.9}) that there exists a positive number $\Delta$ for which
$$P^kM^{-1}Z \ll Z^\dagger\ll P^\ep \Xi_\ep\ll P^{k-k\Delta -(2s-s_0)\sigma_0},$$
whence $Z\ll MP^{-k\Delta-(2s-s_0)\sigma_0}$. On recalling that $P=N^{1/k}$, the proof of Theorem \ref{theorem1.1} is complete.

\section{Solubility for smaller exponents}\label{sect:7}
For smaller values of $k$ we have available the option of applying a slightly different argument using some generating functions on a complete interval. This enables us to take advantage of the fact that Weyl's inequality is superior to the currently available values for $\sigma$ in (\ref{1.4}) when $k\le 6$. We also gain an advantage in our major arc treatment through the use of Lemma \ref{lemma3.2} instead of Lemma \ref{lemma6.3}. A full account of the details and ramifications of Theorem \ref{theorem1.2} would demand much more space than seems warranted for the present paper. We therefore aim for a concise exposition in which some of the less significant details are sketched rather than fully explained.\par

We begin by indicating how to establish the conclusion of Theorem \ref{theorem1.2} for $k\ge 7$. Recall the integers $s_0(k)$, $u_0(k)$ and $\sigma(k)$ recorded in Table 1. Then by reference to the tables of permissible exponents in \cite[\S\S9-22]{VW:00}, one finds from the last numerical value in each table that there is a positive integer $t<\tfrac{1}{2}s_0$ together with positive numbers $\lambda_t$, $\sigma$ and $\omega$, such that
\begin{align*}\int_{\fn\cap [0,1)}|g(\alpha)|^{s_0}\d\alpha&\le \Bigl( \sup_{\alpha \in \fn}|g(\alpha)|\Bigr)^{s_0-2t}\int_0^1|g(\alpha)|^{2t}\d\alpha \\
&\ll (P^{1-\sigma+\ep})^{s_0-2t}P^{\lambda_t+\ep}\ll P^{s_0-k-\omega}.
\end{align*}
For each integer $k$ with $7\le k\le 20$, therefore, one finds that the exponent pair $(s_0(k),\sigma(k))$ defined in Table 1, forms a smooth accessible pair for $k$. The analogous conclusion holds for larger values of $k$ by virtue of the results contained in \cite{W:92war} and \cite{W:95sws}. The conclusion of Theorem \ref{theorem1.2} therefore follows at once from Theorem \ref{theorem1.1} for $k\ge 7$, and it remains only to consider the exponents $k$ with $4\le k\le 6$.\par

When $4\le k\le 6$, define the integers $u=u(k)$, $v=v(k)$ and $w=w(k)$ as in the table below.
$$\boxed{\begin{matrix} k&4&5&6\\
u(k)&5&8&12\\
v(k)&12&20&26\\
w(k)&8&12&16\\
\end{matrix}}$$
\vskip.1cm
\begin{center}\text{Table 3: Parameters for the proof of Theorem \ref{theorem1.2}}\end{center}

\noindent We note for future reference that for each exponent $k$, one has
$$z(k)=(w-u)/(1-u/v)>k+1.$$
On considering the underlying Diophantine equations, the methods of \cite{Va:89} and \cite{VW:95} show that for each $k$ one has
\begin{equation}\label{7.1}
\int_0^1|g(\alpha)|^v\d\alpha \ll P^{v-k}.
\end{equation}
This argument is made explicit for $k=4$ in \cite[Lemma 5.2]{Va:89} and for $k=5$ in \cite[Lemma 7.3]{VW:95}. The reader should not experience any difficulty in extracting the analogous conclusion for $k=6$ in like manner. Finally, we note that from the tables of exponents in \cite{BW:00-27} and \cite{VW:95}, one has
\begin{equation}\label{7.2}
\int_0^1|g(\alpha)|^{2u}\d\alpha \ll P^{2u-k+\theta+\ep},
\end{equation}
where $\theta=\theta_{u,k}$ is given by
$$\theta_{5,4} = 0.213431,\quad \theta_{8,5} = 0.077363,\quad \mbox{and}\quad \theta_{12,6}=0.$$

\par We suppose that $s\ge w=\tfrac{1}{2}(s_0+u_0)$, and employ the notation introduced in \S6, modifying the definition of $\tg(\alpha)$ by putting
$$\tg(\alpha)=g_1(\alpha)\cdots g_u(\alpha)f_{u+1}(\alpha)\cdots f_s(\alpha).$$
As in the proof of Theorem \ref{theorem1.7}, by subdividing the interval $[N,N+M]$ into subintervals of length $P_t^{k-1}$, there is no loss of generality in supposing that $M=P_t^{k-1}$. When $s\ge k+1$, moreover, the analysis leading to \cite[Lemma 9.4]{W:FAF} is again easily modified to confirm that, uniformly in $\mu\in [N,N+M]$, one has
$$\int_\fM \tff(\alpha)\tg(\alpha)e(-\alpha \mu)K_-(\alpha)\d\alpha \gg \tau \tff(0)P^{s-k}.$$
Then we deduce as in the argument of the proof of Theorem \ref{theorem1.1} that the lower bound (\ref{6.5}) remains valid in the present circumstances.\par

Let $j$ be either $1$ or $2$. Then an application of H\"older's inequality reveals that
\begin{equation}\label{7.3}
I_1(\fp_j)\le \prod_{i=1}^u\prod_{m=u+1}^s\Bigl( \int_{\fp_j}|g_i(\alpha)^uf_m(\alpha)^{s-u}H^\dagger(\alpha)K_-(\alpha)|\d\alpha \Bigr)^{1/(u(s-u))}.
\end{equation}
Consider indices $i$  and $m$ with $1\le i\le u<m\le s$. An application of H\"older's inequality yields the bound
\begin{align*}
\int_{\fN_m\cap \fp_j}|g_i(\alpha)^u&f_m(\alpha)^{s-u}H^\dagger(\alpha)K_-(\alpha)|\d\alpha \\
&\le H^\dagger(0)\cM_{i,v}^*(\R;|K_-|)^{u/v}\cM_{m,y}(\fN_m\cap \fp_j;|K_-|)^{1-u/v},
\end{align*}
where
$$y=(s-u)/(1-u/v)\ge (w-u)/(1-u/v)=z.$$
Since we may suppose that $z>k+1$, we deduce from (\ref{7.1}) via Lemma \ref{lemma2.3} in combination with Lemma \ref{lemma3.2} that
\begin{equation}\label{7.4}
\int_{\fN_m\cap \fp_j}|g_i(\alpha)^uf_m(\alpha)^{s-u}H^\dagger(\alpha)K_-(\alpha)|\d\alpha \ll Z^\dagger P^{s-k}L(P)^{1-3s\upsilon_{ij}}.
\end{equation}

\par Next, by applying the mean value estimate (\ref{7.2}) together with Lemma \ref{lemma2.3}, we find that
$$\cM_{i,2u}^*(\R;K_2^-)\ll P^{2u-k+\theta+\ep}.$$
Then on recalling Weyl's inequality (see \cite[Lemma 2.4]{V:HL}), if follows from an application of Schwarz's inequality in combination with Lemma \ref{lemma6.1} that
\begin{align*}
\int_{\fn_m}|g_i(\alpha)^uf_m(\alpha)^{s-u}H^\dagger(\alpha)K_-(\alpha)|\d\alpha &\le \left( \sup_{\alpha \in \fn_m}|f_m(\alpha)|\right)^{s-u}\cJ_1^{1/2}\cM^*_{i,2u}(\R;K_2^-)^{1/2}\\
&\ll (Z^\dagger)^{1/2}(P^{1-\sigma+\ep})^{s-u}\left( P^{2u-k+\theta+\ep}\right)^{1/2}.
\end{align*}
Since $\sigma=2^{1-k}$, we see that there exists a positive number $\Delta$ with the property that
\begin{equation}\label{7.5}
\int_{\fn_m}|g_i(\alpha)^uf_m(\alpha)^{s-u}H^\dagger(\alpha)K_-(\alpha)|\d\alpha \ll (Z^\dagger)^{1/2}(P^{2s-2k}\Xi
)^{1/2},
\end{equation}
where we write
$$\Xi=P^{k-k\Delta -(2s-s_0)\sigma}.$$

\par On substituting (\ref{7.4}) and (\ref{7.5}) into (\ref{7.3}) and from there into (\ref{6.5}), we deduce that
$$P^{s-k}Z^\dagger \ll P^{s-k}\left(Z^\dagger L(P)+(Z^\dagger)^{1/2}\Xi^{1/2}\right)^{1-1/s}\left(Z^\dagger L(P)^{1-3s}+(Z^\dagger)^{1/2}\Xi^{1/2}\right)^{1/s}.$$
Then on disentangling the last inequality, we conclude just as before that
$$P^kM^{-1}Z\ll Z^\dagger\ll P^{k-k\Delta -(2s-s_0)\sigma},$$
whence $Z\ll MP^{-k\Delta-(2s-s_0)\sigma}$. On recalling that $P=N^{1/k}$, the proof of the main conclusion of Theorem \ref{theorem1.2} is complete.\vskip.3cm

We now turn to a brief sketch of the proof of the estimate $Z_{s+t,k}(N,M)\ll MN^{-\Delta}$, for a sufficiently small $\Delta>0$, valid for $s\ge \tfrac{1}{2}s_0$. Here we must obtain greater control of the behaviour of the generating functions $g_i(\alpha)$ on the major arcs $\fN_i$. This we achieve by means of two modifications to the above argument. First we replace the underlying sequence $\cA(P,R)$ by the related sequence
$$\cC(P,R)=\{lm:\text{$1\le l\le \sqrt{R}$, $1\le m\le P/\sqrt{R}$, $p|m\Rightarrow \sqrt{R}<p\le R$}\}.$$
We also replace the major arc $\fN(q,a)$ by the set of real numbers $\alpha$ for which $|q\alpha -a|\le \sqrt{R}P^{-k}$, and then write $\fN$ for the union of the intervals $\fN(q,a)$ over all co-prime integers $a$ and $q$ with $1\le q\le \sqrt{R}$. We then put $\fn=\R\setminus \fN$. As the reader will find in the papers \cite{BW:01} and \cite{BW:01a}, with such a modification, the generating function $g(\alpha)$ may be analysed on $\fN$ essentially as precisely as the generating function $f(\alpha)$. Thus one finds that the conclusion of Lemma \ref{lemma6.3} holds with the hypothesis on $t$ weakened to the condition $t>k+1$. On the other hand, the conclusion of Lemma \ref{lemma6.4} now holds only with $\sigma_0$ replaced by a positive number depending on $\eta$, which we recall defines $R$ by means of the relation $R=P^\eta$.\par

With the modifications described in the previous paragraph, the argument of \S6 employed in the proof of Theorem \ref{theorem1.1} applies without further modification to show that when $s\ge \max \{k+1,\tfrac{1}{2}s_0\}$, then for some positive number $\Delta$ one has
$$Z_{s+t,k}(N,M)\ll MN^{-\Delta}.$$
This completes our sketch of the proof of the remaining part of Theorem \ref{theorem1.2}, the details, though not difficult, being lengthy to record in full.

\section{Cubic forms}\label{sect:8}
In this section we outline the proof of Theorem \ref{theorem1.3}. Although in principle one has only to apply the methods of \S7 together with Lemma \ref{lemma4.1} to establish the desired bounds for $Z_{s,3}(N)$ $(s=5,6)$, the small number of  available variables leads to complications in handling the contributions from the major arcs. We note that the bound $Z_{4,3}(N)=o(N)$ is a consequence of Theorem \ref{theorem1.5}, which we have already established. Thus it suffices to consider $Z_{s,3}(N)$ for $s=5$ and $6$.\par

We begin by establishing an auxiliary mean value estimate. We suppose throughout that $k=3$ and $P=N^{1/3}$. Also, we let $\fV(q,a)$ denote the set of real numbers $\alpha \in [0,1)$ with $|q\alpha -a|\le P^{-9/4}$, and write $\fV$ for the union of the sets $\fV(q,a)$ with $0\le a\le q\le P^{3/4}$ and $(a,q)=1$. We then put $\fv=[0,1)\setminus \fV$.

\begin{lemma}\label{lemma8.1} Suppose that $\cZ\subseteq [0,N]$ has measure $Z$, and $|\eta_\mu|=1$ for all $\mu\in \cZ$. Then whenever $\lambda$ is a non-zero real number, one has
$$\int_{-\infty}^\infty |g(\lambda \alpha)^4H_{\eta,\cZ}(\alpha)^2|K_1(\alpha)\d\alpha \ll P^3(\log P)^{2+\ep}Z+P(\log P)^\ep Z^2.$$
\end{lemma}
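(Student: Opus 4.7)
The plan is to expand $|g(\lambda\alpha)|^4 |H_{\eta,\cZ}(\alpha)|^2$ as a multiple sum-and-integral, apply Fubini together with the Fourier inversion of $K_1$ recorded in (\ref{2.9}), and reduce the estimate to a Diophantine counting problem.

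First I would expand $|g(\lambda\alpha)|^4 = \sum_{(\x,\y)\in\cA(P,R)^4} e(\lambda\alpha D)$ with $D = x_1^3+x_2^3-y_1^3-y_2^3$, and $|H_{\eta,\cZ}(\alpha)|^2 = \int_{\cZ^2}\eta_\mu\overline{\eta_\nu}e(-\alpha(\mu-\nu))\,\d\mu\,\d\nu$. Interchanging orders of integration and using $|\widehat{K_1}(t)|\le \delta^{-1}\mathbf{1}_{|t|<\delta}$ from (\ref{2.9}), one obtains
$$
I \le \delta^{-1}\int_{\cZ^2}\cW(\mu,\nu)\,\d\mu\,\d\nu,
$$
where $\cW(\mu,\nu)$ counts quadruples $(\x,\y)\in\cA(P,R)^4$ satisfying $|\lambda D - (\mu-\nu)|<\delta$. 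I would then decompose $\cW = \cW_0 + \cW_1$ according to whether $D=0$ or $D\ne 0$. The diagonal equals $\cW_0(\mu,\nu) = T_0\cdot\mathbf{1}_{|\mu-\nu|<\delta}$ with $T_0 = \int_0^1|g(\alpha)|^4\,\d\alpha$; invoking a Hooley-type fourth-moment estimate $T_0 \ll P^2(\log P)^{2+\ep}$ for smooth cubes and integrating against $\cZ^2$ contributes $\ll T_0\cdot Z$, safely absorbed into the first target term $P^3(\log P)^{2+\ep}Z$.

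For the off-diagonal piece, since $\delta$ is small, for each fixed $(\mu,\nu)$ at most $O(1)$ nonzero integers $D$ satisfy the inequality, so $\cW_1(\mu,\nu)\le r(D^\ast(\mu-\nu))$, where $r(D) = \#\{(\x,\y)\in\cA(P,R)^4 : x_1^3+x_2^3-y_1^3-y_2^3=D\}$. A divisor-function argument—fixing $(x_1,y_1)$ and counting $(x_2,y_2)$ with $x_2^3-y_2^3 = D+y_1^3-x_1^3$ via factorizations $(x_2-y_2)(x_2^2+x_2y_2+y_2^2) = $ const—yields $r(D)\ll P^{2+\ep}$ uniformly for $D\ne 0$. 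Writing $F(u) = |\cZ\cap(\cZ-u)|$ and $G(v) = \int_{v-\delta}^{v+\delta}F(u)\,\d u$, Fubini converts the off-diagonal contribution into
$$
\delta^{-1}\int_{\cZ^2}\cW_1\,\d\mu\,\d\nu \ll \delta^{-1}\sum_{D\ne 0}r(D)G(\lambda D),
$$
with $\|G\|_\infty\le 2\delta Z$ and $\sum_D G(\lambda D)\ll \delta Z^2$ by a Riemann-sum bound against $\int F = Z^2$.

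The main obstacle is extracting the target $P(\log P)^\ep Z^2$ from this sum, rather than the coarser $P^{2+\ep}Z^2$ that follows by brute substitution of $r(D)\ll P^{2+\ep}$. I would split $D$ according to whether $r(D)\le M$ or $r(D)>M$, with threshold $M = P(\log P)^\ep$. The small-$r$ contribution is bounded via the threshold and $\sum_D G(\lambda D)\ll \delta Z^2$, giving $\delta^{-1}\cdot M\sum G \ll P(\log P)^\ep Z^2$, which matches the second target term. For the large-$r$ tail I would use the Markov-type estimate $\sum_{r(D)>M}r(D)\le M^{-1}\sum_D r(D)^2 \ll P^{5+\ep}/M$—the crucial input being the eighth-moment bound $\sum_D r(D)^2 = \int_0^1|g|^8\,\d\alpha \ll P^{5+\ep}$ for smooth cubes—combined with $\|G\|_\infty\le 2\delta Z$ to fold it into the first term. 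The hard part is making this threshold balance yield only log-type losses of the claimed form, which forces one to exploit the divisor bound for $r(D)$, the eighth moment of $g$, and both $L^1$ and $L^\infty$ control on $G$ in a tightly-tuned way rather than a naive Cauchy--Schwarz.
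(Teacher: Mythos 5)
Your setup through the reduction to
$$
I \le \delta^{-1}\int_{\cZ^2}\cW(\mu,\nu)\,\d\mu\,\d\nu
$$
matches the paper's first step, and your handling of the diagonal ($D=0$) is sound and in fact sharper than needed. The genuine problem is in the off-diagonal piece, and you have correctly flagged it as "the hard part" — but it is not merely a matter of tuning the threshold; the Markov argument you propose falls short by a full power of $P$. With $\sum_D r(D)^2 \ll P^{5+\ep}$ (the eighth moment) and $\|G\|_\infty \le 2\delta Z$, the tail contribution is $\delta^{-1}\cdot\delta Z\cdot\sum_{r(D)>M}r(D) \ll Z\cdot P^{5+\ep}/M$. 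Taking $M = P(\log P)^\ep$ — the largest $M$ compatible with the second target term $P(\log P)^\ep Z^2$ — yields $Z\cdot P^{4+\ep}$, not $P^3(\log P)^{2+\ep}Z$. No choice of threshold repairs this: raising $M$ damages the small-$r$ term, and the only inputs available (a divisor bound $r(D)\ll P^{2+\ep}$, the eighth moment, and $L^1/L^\infty$ control on $G$) cannot close a gap of size $P$. So the proposal, as written, does not prove the lemma.

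The paper avoids this entirely by \emph{not} splitting $D$ according to the size of $r(D)$. Instead, it splits the representation count itself, $R(h) = R(h;\fV) + R(h;\fv)$, via a classical Hardy-Littlewood dissection into major arcs $\fV$ and minor arcs $\fv$ for the full Weyl sum $f$. On the major arcs one has the uniform bound $R(h;\fV)\ll P(\log P)^\ep$ for $h\ne 0$, which produces the $P(\log P)^\ep Z^2$ term exactly as in your small-$r$ analysis. The key idea you are missing is what happens on the minor arcs: the paper does \emph{not} bound $R(h;\fv)$ pointwise (which would lose a power of $P$, just as your Markov argument does). Instead it keeps the exponential sum
$$
\sum_{|h|\le (N+\delta)/\lambda}R(h;\fv)\,e(\lambda h\beta) = \int_{\fv}|f(\alpha)|^4\sum_{|h|}e\bigl(h(\lambda\beta-\alpha)\bigr)\,\d\alpha
\ll \Bigl(\sup_{\alpha\in\fv}|f(\alpha)|\Bigr)^4\int_0^1\min\bigl\{N,\|\alpha-\lambda\beta\|^{-1}\bigr\}\,\d\alpha,
$$
and applies the enhanced Weyl inequality $\sup_{\fv}|f| \ll P^{3/4}(\log P)^{1/4+\ep}$ (from Vaughan/Hall--Tenenbaum) together with the logarithmic bound on the $\min$-integral to get a \emph{uniform-in-$\beta$} bound of $P^3(\log P)^{2+\ep}$. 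Then Lemma~\ref{lemma2.1} converts $\int_{-\infty}^\infty|H_\cZ(\beta)|^2K_1(\beta)\,\d\beta \ll Z$ directly into the first target term. This exploitation of oscillation and the Fourier structure of $R(h;\fv)$ — rather than the magnitude of $r(D)$ on a level set — is the mechanism that rescues the extra factor of $P$, and it is not reachable from the threshold/Markov framework you set up.
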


\begin{proof} As in the proof of Lemma \ref{lemma2.1}, the mean value under consideration may be written as
$$I=\int_{\cZ^2}{\overline \eta}_\mu\eta_\nu\int_{-\infty}^\infty |g(\lambda \alpha )|^4e(\alpha (\mu-\nu))K_1(\alpha )\d\alpha \d\mu\d\nu .$$
The relation (\ref{2.9}) shows that
\begin{equation}\label{8.1}
I\le \int_{\cZ^2}\delta^{-1}\cW(\mu,\nu)\d\mu \d\nu,
\end{equation}
where
$$\cW(\mu,\nu)=\sum_{1\le x_1,\ldots ,x_4\le P}\max\{0,1-\delta^{-1}|\lambda (x_1^3+x_2^3-x_3^3-x_4^3)+\mu-\nu|\}.$$
We observe that on writing $R(h)$ for the number of integral solutions of the equation
$$x_1^3+x_2^3-x_3^3-x_4^3=h,$$
with $1\le x_i\le P$ $(1\le i\le 4)$, then one has
\begin{equation}\label{8.2}
\cW(\mu,\nu)=\sum_{h\in \Z}R(h)\max\{0,1-\delta^{-1}|\lambda h+\mu-\nu|\}.
\end{equation}

\par When $\fB\subseteq [0,1)$ is measurable, write
$$R(h;\fB)=\int_\fB|f(\alpha)|^4e(-h\alpha)\d\alpha .$$
We analyse $R(h)$ by means of the Hardy-Littlewood method, and in this way we deduce from (\ref{8.1}) and (\ref{8.2}) that
\begin{equation}\label{8.3}
\int_{-\infty}^\infty |g(\lambda \alpha )^4H_{\eta,\cZ}(\alpha)^2|K_1(\alpha)\d\alpha \ll T(\fV)+T(\fv),
\end{equation}
where we write
$$T(\fB)=\delta^{-1}\int_{\cZ^2}\sum_{h\in\Z}R(h;\fB)\max\{0,1-\delta^{-1}|\lambda h+\mu-\nu|\}\d\mu\d\nu .$$

\par We begin by analysing the major arc contribution. Although there is no convenient source in the literature, the reader will experience no difficulty in applying the methods of \cite[\S\S4.3 and 4.4]{V:HL} to confirm that
$$R(h;\fV)\ll P(\log P)^\ep\quad (h\ne 0),\quad \text{and}\quad R(0;\fV)\ll P^{1+\ep}.$$
We note that a precise form of the first of these bounds may be found in \cite[equation (1.3)]{Kaw:96}. Equipped with these bounds, we deduce that
\begin{align*}
T(\fV)\ll &\,\delta^{-1}P^{1+\ep}\int_{\cZ^2}U_\delta (\mu-\nu)\d\mu\d\nu \\
&\,+\delta^{-1}P(\log P)^\ep \int_{\cZ^2}\sum_{h\in\Z}U_\delta (\lambda h+\mu-\nu)\d\mu\d\nu\\
&\ll \delta^{-1}P^{1+\ep}\int_\cZ\int_{\nu-\delta}^{\nu+\delta}\d\mu\d\nu+\delta^{-1}P(\log P)^\ep\int_{\cZ^2}\d\mu\d\nu .
\end{align*}
Consequently, one has
\begin{equation}\label{8.4}
T(\fV)\ll P^{1+\ep}Z+P(\log P)^\ep Z^2.
\end{equation}

\par Next, we recall that an enhanced version of Weyl's inequality is available from the argument underlying the proof of \cite[Lemma 1]{Va:86c}, on applying bounds of Hall and Tenenbaum \cite{HT:86} for Hooley's $\Delta$-function in place of Vaughan's application of Hooley \cite{Hoo:79}. Thus one finds that
\begin{equation}\label{8.5}
\sup_{\alpha \in \fv}|f(\alpha)|\ll P^{3/4}(\log P)^{1/4+\ep}.
\end{equation}
Observe next that
$$T(\fv)=\sum_{|h|\le (N+\delta)/\lambda }R(h;\fv)\int_{-\infty}^\infty |H_\cZ(\beta)|^2e(\lambda h\beta)K_1(\beta)\d\beta .$$
But equipped with the estimate (\ref{8.5}), we deduce that
\begin{align*}
\sum_{|h|\le (N+\delta)/\lambda}R(h;\fv)e(\lambda h\beta)&=\int_\fv |f(\alpha)|^4\sum_{|h|\le (N+\delta)/\lambda}e(h(\lambda \beta -\alpha))\d\alpha \\
&\ll \int_\fv |f(\alpha )|^4\min\{(N+\delta)/\lambda ,\|\alpha -\lambda \beta\|^{-1}\}\d\alpha \\
&\ll P^3(\log P)^{1+\ep}\int_0^1\min \{ (N+\delta)/\lambda ,\|\alpha -\lambda \beta\|^{-1}\}\d\alpha .
\end{align*}
We therefore obtain the bound
$$\sum_{|h|\le (N+\delta)/\lambda}R(h;\fv)e(\lambda h\beta)\ll P^3(\log P)^{2+\ep},$$
and hence we conclude from Lemma \ref{lemma2.1} that
\begin{equation}\label{8.6}
T(\fv)\ll P^3(\log P)^{2+\ep}\int_{-\infty}^\infty |H_\cZ(\beta)|^2K_1(\beta)\d\beta \ll P^3(\log P)^{2+\ep}Z.\end{equation}

\par The proof of the lemma is completed by substituting the estimates (\ref{8.4}) and (\ref{8.6}) into (\ref{8.3}).
\end{proof}

We also require a variant of Lemma \ref{lemma6.3} providing a reasonably sharp bound with relatively few variables. We must first introduce some auxiliary sets of major arcs. When $1\le i\le 5$, we write $\fV_i$ for the set of real numbers $\alpha \in \fm\cup \ft$ for which $\lambda_i\alpha \in \fV\pmod{1}$, and $\fv_i=(\fm\cup \ft)\setminus \fV_i$. In addition, we define $\fW(q,a)$ to be the set of $\alpha\in [0,1)$ such that $|q\alpha -a|\le (\log P)^{100}P^{-3}$, and take $\fW$ to be the union of the sets $\fW(q,a)$ with $0\le a\le q\le (\log P)^{100}$ and $(a,q)=1$. We then put $\fw=[0,1)\setminus \fW$. Also, when $1\le i\le 5$, we write $\fW_i$ for the set of real numbers $\alpha \in \fm\cup \ft$ for which $\lambda_i\alpha \in \fW\pmod{1}$, and $\fw_i=(\fm\cup \ft)\setminus \fW_i$.\par

We pause to record an auxiliary lemma.

\begin{lemma}\label{lemma8.2}
Suppose that $t>5$ and $1\le i\le s$. Then one has
$$\cM^*_{i,t}(\fW_i;|K_\pm|)\ll_t P^{t-3}L(P).$$
\end{lemma}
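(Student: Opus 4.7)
The plan is to reduce the integral over $\fW_i\subseteq\R$ to a mean-value estimate for $|g|^t$ on the narrow unit-interval arcs $\fW$, then glue back via the kernel bound. First I would invoke (\ref{2.5}) to record that $|K_\pm(\alpha)|\ll L(P)\min\{1,\alpha^{-2}\}$, and then apply Lemma \ref{lemma2.3} with $h=g$, $\lambda=\lambda_i$, $\fB=\fW$, $X=0$, and $K(\alpha)=L(P)^{-1}|K_\pm(\alpha)|$ (which satisfies $K(\alpha)\ll\min\{1,\alpha^{-2}\}$). Since $\fW_i=\{\alpha\in\fm\cup\ft:\lambda_i\alpha\in\fW\pmod 1\}$, this yields
\[
\cM^*_{i,t}(\fW_i;|K_\pm|)\ll L(P)\int_{\fW}|g(\alpha)|^t\,d\alpha,
\]
reducing matters to establishing $\int_{\fW}|g(\alpha)|^t\,d\alpha\ll P^{t-3}$ for $t>5$.

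For this mean-value bound I would work arc by arc. On $\fW(q,a)$ with $q\le(\log P)^{100}$ and $|q\alpha-a|\le(\log P)^{100}P^{-3}$ we have $q+P^3|q\alpha-a|\le 2(\log P)^{100}$, so the narrow-major-arc estimate for smooth Weyl sums (see \cite[Lemma 8.5]{VW:91}) supplies, with $\beta=\alpha-a/q$,
\[
|g(\alpha)|\ll q^{\ep-1/3}P(1+P^3|\beta|)^{-1/3}.
\]
Since $t>3$, integration of $(1+P^3|\beta|)^{-t/3}$ in $\beta$ over $\R$ is bounded by a constant multiple of $P^{-3}$, and hence
\[
\int_{\fW(q,a)}|g(\alpha)|^t\,d\alpha \ll_t q^{t\ep-t/3}P^{t-3}.
\]
Summing over the at most $q$ coprime residues $a\pmod q$ and then over $q\le(\log P)^{100}$ yields
\[
\int_{\fW}|g(\alpha)|^t\,d\alpha\ll_t P^{t-3}\sum_{q\le(\log P)^{100}}q^{1-t/3+t\ep}.
\]
For $t>5$ and $\ep$ small enough, the $q$-sum is bounded by a fixed power of $\log\log P$, which is absorbed into $L(P)$ via the freedom in the choice of the trivial-arc parameter $T(P)$ in the cubic setting of \S\ref{sect:8}. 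Combining with the first step delivers the stated upper bound.

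The main obstacle is controlling the borderline range $5<t\le 6$, where the $q$-sum $\sum_{q\le(\log P)^{100}}q^{1-t/3}$ is only slowly convergent (respectively divergent), and a naive estimate threatens a factor $(\log P)^{c(t)}$ beyond the single $L(P)$ permitted in the conclusion. This is resolved either by appealing to the polylogarithmic margin in the choice of $L(P)$, or alternatively by replacing the pointwise bound for $g(\alpha)$ by the Hardy--Littlewood approximation $g(\alpha)\approx\rho(q)v(\alpha-a/q)$ with $v(\beta)=\int_0^Pe(\beta u^3)\,du$ and a multiplier $\rho(q)\ll q^{-1/3+\ep}$ arising from the smoothness structure, which renders the remaining $q$-sum absolutely convergent and eliminates the need for any delicate log bookkeeping.
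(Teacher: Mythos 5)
Your first step---applying Lemma~\ref{lemma2.3} together with the bound $|K_\pm(\alpha)|\ll L(P)\min\{1,\alpha^{-2}\}$ to reduce to the unit-interval estimate $\int_{\fW}|g(\alpha)|^t\,\d\alpha\ll P^{t-3}$---matches the paper and is correct, as is your identification of \cite[Lemma 8.5]{VW:91} as the key pointwise input on $\fW$. The gap is in how you then bound the unit-interval integral. Raising the pointwise bound to the $t$-th power and summing over $a$ and $q$ gives a $q$-sum of shape $\sum_{q\le(\log P)^{100}}q^{1-t/3+t\ep}$, and for $5<t\le 6$ the exponent $1-t/3$ lies in $(-1,-2/3]$: this sum is of size $\asymp(\log P)^{100(2-t/3)}$ up to $\ep$'s, a genuine \emph{power of $\log P$}, not of $\log\log P$ as you assert. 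Neither of your two proposed repairs closes this. The ``polylogarithmic margin in $L(P)$'' is an illusion: in this part of the paper $L(P)=\max\{1,\log T(P)\}$ where $T(P)$ is produced by Lemma~\ref{lemma6.2} and may grow arbitrarily slowly (the Bentkus--G\"otze--Freeman mechanism gives no rate), so $L(P)$ can be far smaller than any fixed power of $\log P$, and a stray $(\log P)^{c}$ cannot be absorbed into a single factor of $L(P)$. The Hardy--Littlewood approximation $g(\alpha)\approx\rho(q)v(\beta)$ with $\rho(q)\ll q^{-1/3+\ep}$ does not help either, since raising $\rho(q)$ to the $t$-th power and summing over $a\pmod q$ and $q$ yields exactly the same critical sum $\sum_q q^{1-t/3+t\ep}$.

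The paper avoids this by \emph{not} raising the pointwise bound to the full power $t$. Writing $t=2+(3+\gamma)(1+\gamma)$ with $\gamma>0$ (possible precisely when $t>5$), one bounds $|g(\alpha)|^{t-2}\ll P^{t-2}\Psi(\alpha)^{(t-2)/(3+\gamma)}=P^{t-2}\Psi(\alpha)^{1+\gamma}$ using the pointwise estimate, where $\Psi(\alpha)=(q+P^3|q\alpha-a|)^{-1}$ on $\fW(q,a)$, but retains the remaining $|g(\alpha)|^2$ inside the integral. The crucial point is that the exponent on $\Psi$ is $1+\gamma>1$, so the appendix Lemma~\ref{lemma11.1} (a sharpened Br\"udern pruning lemma, carried out with Ramanujan sums and a multiplicative analysis that loses no $\ep$'s) gives $\int_{\fW}|g(\beta)|^2\Psi(\beta)^{1+\gamma}\,\d\beta\ll P^{-1}$ with absolutely convergent $q$-sums. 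This yields $\int_{\fW_i\cap[0,1)}|g_i|^{t}\ll P^{t-3}$ cleanly for all $t>5$, and Lemma~\ref{lemma2.3} then supplies the single factor of $L(P)$. The idea you are missing is precisely this: reserve a factor $|g|^2$ to be handled in mean square rather than pointwise, which buys the extra convergence needed in the borderline range.
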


\begin{proof} We may suppose that $t=2+(3+\gamma)(1+\gamma)$, where $\gamma>0$. We define the function $\Psi(\alpha)$ by taking $\Psi(\alpha)=(q+P^3|q\alpha-a|)^{-1}$ when $\alpha \in \fW(q,a) \subseteq \fW$, and otherwise by putting $\Psi(\alpha)=0$. One may apply \cite[Lemma 8.5]{VW:91} to deduce that when $\alpha \in \fW(q,a)\subseteq \fW$, then
$$g(\alpha)\ll q^\ep P(q+|q\alpha-a|P^3)^{-1/3}.$$
Then when $\alpha \in \fW$ one has
$$g(\alpha)\ll P\Psi(\alpha)^{1/(3+\gamma)}.$$
It follows by a change of variable that the last estimate delivers the bound
$$\int_{\fW_i\cap [0,1)}|g_i(\alpha)|^t\d\alpha \ll P^{t-2}\int_\fW|g(\beta)|^2\Psi(\beta)^{1+\gamma}\d\beta.$$
A straightforward modification of the proof of \cite[Lemma 2]{Bru:88} (see Lemma \ref{lemma11.1} below) shows that
$$\int_\fW|g(\beta)|^2\Psi(\beta)^{1+\gamma}\d\beta \ll P^{-1}.$$
We therefore arrive at the upper bound
$$\int_{\fW_i\cap [0,1)}|g_i(\alpha)|^t\d\alpha \ll P^{t-3}.$$
The conclusion of the lemma consequently follows from Lemma \ref{lemma2.3}.
\end{proof}

Now we initiate the proof of Theorem \ref{theorem1.3}. We write $\cZ=\cZ_{s,3}^\tau (N;\L)$, and put $Z_{s,3}(N)=\text{meas}(\cZ)$. We seek first to bound $Z=Z_{5,3}(N)$, and fix $s=5$ in this first case. Write
$$\tg(\alpha)=f_1(\alpha)f_2(\alpha)g_3(\alpha)g_4(\alpha)g_5(\alpha),$$
and then put
\begin{equation}\label{8.7}
I(\fB)=\int_\fB |\tg(\alpha)H(\alpha)K_-(\alpha)|\d\alpha,
\end{equation}
in which we have written $H(\alpha)=H_\cZ(\alpha)$. We also write
$$\cJ_m=\int_{-\infty}^\infty |g_m(\alpha)^4H(\alpha)^2|K_1(\alpha)\d\alpha .$$
The analysis leading to \cite[Lemma 9.4]{W:FAF} is easily modified to confirm that, uniformly in $\mu\in (N/2,N]$, one has
$$\int_\fM \tg(\alpha)e(-\alpha \mu)K_-(\alpha)\d\alpha \gg \tau P^2.$$
Moreover, for each $\mu \in \cZ$ one has
$$\int_{-\infty}^\infty \tg(\alpha)e(-\alpha \mu)K_-(\alpha)\d\alpha =0.$$
Then by subtracting and integrating over $\mu \in \cZ$, we deduce as before that
\begin{equation}\label{8.8}
I(\fp_1)+I(\fp_2)\gg P^2Z.
\end{equation}

\par Let $j$ be either $1$ or $2$. Then an application of H\"older's inequality to (\ref{8.7}) reveals that
$$I(\fp_j)\le \prod_{i=1}^2\prod_{m=1}^3\Bigl( \int_{\fp_j}|f_i(\alpha)^2g_m(\alpha)^3H(\alpha)K_-(\alpha)|\d\alpha\Bigr)^{1/6}.$$
Consider indices $i$ and $m$ with $i\in\{1,2\}$ and $m\in\{3,4,5\}$. By applying H\"older's inequality, we find that
\begin{align*}
\int_{\fV_i\cap\fW_m\cap\fp_j}&|f_i(\alpha)^2g_m(\alpha)^3H(\alpha)K_-(\alpha)|\d\alpha \\
&\le H(0)\cM_{i,22/5}(\fV_i\cap\fp_j;|K_-|)^{5/11}\cM^*_{m,11/2}(\fW_m;|K_-|)^{6/11}.
\end{align*}
From Lemmata \ref{lemma3.2} and \ref{lemma8.2}, therefore, we deduce that
\begin{align}
\int_{\fV_i\cap\fW_m\cap\fp_j}|f_i(\alpha)^2&g_m(\alpha)^3H(\alpha)K_-(\alpha)|\d\alpha \notag\\
&\le H(0)(P^{7/5}L(P)^{1-9s\upsilon_{ij}})^{5/11}(P^{5/2}L(P))^{6/11}\notag\\
&\ll ZP^2L(P)^{1-3s\upsilon_{ij}}.\label{8.9}
\end{align}

\par Next, an application of H\"older's inequality reveals that
\begin{align*}
\int_{\fV_i\cap\fw_m}&|f_i(\alpha)^2g_m(\alpha)^3H(\alpha)K_-(\alpha)|\d\alpha \\
&\le \Bigl( \sup_{\alpha \in \fw_m}|g_m(\alpha)|\Bigr)^{7/11}\cM_{i,22/5}(\fV_i\cap\fp_j;K_2^-)^{5/11}\cJ_m^{1/2}\cM^*_{m,8}(\R;K_2^-)^{1/22}.
\end{align*}
On the one hand, we observe that as a consequence of \cite[Lemmata 7.2 and 8.5]{VW:91} together with \cite[Theorem 1.8]{Va:89}, one has
$$\sup_{\alpha \in \fw_m}|g_m(\alpha)|\ll P(\log P)^{-30}.$$
On the other hand, by considering the underlying Diophantine equations, it follows from Vaughan \cite[Theorem 2]{Va:86c} that
$$\int_0^1|g(\beta)|^8\d\beta \ll P^5,$$
whence from Lemma \ref{lemma2.3} we obtain
\begin{equation}\label{8.10}
\cM^*_{m,8}(\R;K_2^-)\ll P^5.
\end{equation}
We therefore deduce from Lemmata \ref{lemma3.2} and \ref{lemma8.1} that
\begin{align*}
\int_{\fV_i\cap\fw_m}|f_i(\alpha)^2g_m(\alpha)^3&H(\alpha)K_-(\alpha)|\d\alpha \\
\ll &\,(P(\log P)^{-30})^{7/11}(P^{7/5})^{5/11}(P^5)^{1/22}\\
&\times (P^3(\log P)^{2+\ep}Z+P(\log P)^\ep Z^2)^{1/2},
\end{align*}
whence
\begin{equation}\label{8.11}
\int_{\fV_i\cap\fw_m}|f_i(\alpha)^2g_m(\alpha)^3H(\alpha)K_-(\alpha)|\d\alpha \ll P^3Z^{1/2}+P^2(\log P)^{-2}Z.
\end{equation}

\par Finally, an application of Schwarz's inequality in combination with Lemma \ref{lemma2.3} delivers the bound
$$\int_{\fv_i}|f_i(\alpha)^2g_m(\alpha)^3H(\alpha)K_-(\alpha)|\d\alpha \ll \Bigl( \sup_{\alpha \in \fv}|f(\alpha)|\Bigr)^2\Bigl(\int_0^1|g(\alpha)|^6 \d\alpha \Bigr)^{1/2}\cJ^{1/2},$$
where $\cJ$ is defined as in (\ref{3.11}). Define $\theta$ by means of the relation $\theta^{-1}=852+16\sqrt{2833}$. Then, by utilising \cite[Theorem 1.2]{W:3cubes} together with (\ref{8.5}) and Lemma \ref{lemma2.1}, we find that
\begin{align}
\int_{\fv_i}|f_i(\alpha)^2g_m(\alpha)^3H(\alpha)K_-(\alpha)|\d\alpha &\ll (P^{3/4+\ep})^2(P^{13/4-\theta+\ep})^{1/2}Z^{1/2}\notag \\
&\ll P^{25/8-\theta/2+3\ep}Z^{1/2}.\label{8.12}
\end{align}

\par By combining (\ref{8.9}), (\ref{8.11}) and (\ref{8.12}), we reach the bound
$$\int_{\fp_j}|f_i(\alpha)^2g_m(\alpha)^3H(\alpha)K_-(\alpha)|\d\alpha \ll P^2L(P)^{1-3s\upsilon_{ij}}Z+P^{25/8-\theta/2+\ep}Z^{1/2}.$$
We therefore deduce that
\begin{align*}
I(\fp_1)+I(\fp_2)\ll &\,\Bigl( ZP^2L(P)+P^{25/8-\theta/2+\ep}Z^{1/2}\Bigr)^{1/2}\\
&\, \times \Bigl( ZP^2L(P)^{-6}+P^{25/8-\theta/2+\ep}Z^{1/2}\Bigr)^{1/2},
\end{align*}
so that we may conclude from (\ref{8.8}) that
$$ZP^2\ll ZP^2L(P)^{-1}+P^{25/8-\theta/2+\ep}Z^{1/2}+Z^{3/4}P^{41/16-\theta/4+\ep}.$$
Disentangling this inequality, we arrive at the bound
$$Z\ll P^{9/4-\theta+\ep}\ll N^{3/4-\theta/3+\ep}.$$
This completes the proof of the estimate for $Z_{5,3}(N)$ asserted in Theorem \ref{theorem1.3}.\vskip.3cm

We now turn to the bound for $Z_{6,3}(N)$ recorded in Theorem \ref{theorem1.3}. On this occasion we write
$$\tg(\alpha)=f_1(\alpha)f_2(\alpha)f_3(\alpha)g_4(\alpha)g_5(\alpha)g_6(\alpha),$$
and then define $I(\fB)$ as in (\ref{8.7}). We also write
$$\cJ_i=\int_{-\infty}^\infty |f_i(\alpha)^2H(\alpha)^2|K_1(\alpha)\d\alpha .$$
The analysis leading to \cite[Lemma 9.4]{W:FAF} is easily modified to confirm that, uniformly in $\mu\in (N/2,N]$, one has
$$\int_\fM \tg(\alpha)e(-\alpha \mu)K_-(\alpha)\d\alpha \gg \tau P^3.$$
Then we deduce as before that
\begin{equation}\label{8.13}
I(\fp_1)+I(\fp_2)\gg P^3Z.
\end{equation}

\par Let $j$ be either $1$ or $2$. Then an application of H\"older's inequality reveals that
\begin{equation}\label{8.14}
I(\fp_j)\le \prod_{i=1}^3\prod_{m=1}^3\Bigl( \int_{\fp_j}|f_i(\alpha)^3g_m(\alpha)^3H(\alpha)K_-(\alpha)|\d\alpha\Bigr)^{1/9}.
\end{equation}
Consider indices $i$ and $m$ with $i\in\{1,2,3\}$ and $m\in\{4,5,6\}$. By applying H\"older's inequality, we find that
\begin{align*}
\int_{\fV_i\cap\fp_j}&|f_i(\alpha)^3g_m(\alpha)^3H(\alpha)K_-(\alpha)|\d\alpha \\
&\le H(0)\cM_{i,24/5}(\fV_i\cap\fp_j;|K_-|)^{5/8}\cM^*_{m,8}(\R;|K_-|)^{3/8}.
\end{align*}
By employing an argument akin to that delivering the bound (\ref{8.10}), one finds that $\cM^*_{m,8}(\R;K^-)\ll P^5L(P)$. Then from Lemma \ref{lemma3.2}, we deduce that
\begin{align}
\int_{\fV_i\cap\fp_j}&|f_i(\alpha)^3g_m(\alpha)^3H(\alpha)K_-(\alpha)|\d\alpha \notag\\
&\le H(0)(P^{9/5}L(P)^{1-9s\upsilon_{ij}})^{5/8}(P^5L(P))^{3/8}\ll ZP^3L(P)^{1-3s\upsilon_{ij}}.\label{8.15}
\end{align}

\par Next, an application of Schwarz's inequality in combination with Lemma \ref{lemma2.3} reveals that
$$\int_{\fv_i}|f_i(\alpha)^3g_m(\alpha)^3H(\alpha)K_-(\alpha)|\d\alpha \ll \Bigl( \sup_{\alpha \in \fv}|f(\alpha)|\Bigr)^2\Bigl(\int_0^1|g(\alpha)|^6 \d\alpha \Bigr)^{1/2}\cJ_i^{1/2}.$$
Then as a consequence of \cite[Theorem 1.2]{W:3cubes} together with Lemma \ref{lemma4.1} and (\ref{8.5}), we find that
\begin{align}
\int_{\fv_i}|f_i(\alpha)^3g_m(\alpha)^3&H(\alpha)K_-(\alpha)|\d\alpha \notag \\
&\ll (P^{3/4+\ep})^2(P^{13/4-\theta+\ep})^{1/2}(PZ+P^{1/2+\ep}Z^{3/2})^{1/2}\notag \\
&\ll P^{29/8-\theta/2+3\ep}Z^{1/2}+P^{27/8-\theta/2+3\ep}Z^{3/4}.\label{8.16}
\end{align}

\par Combining (\ref{8.15}) and (\ref{8.16}), we obtain the bound
\begin{align*}
\int_{\fp_j}&|f_i(\alpha)^3g_m(\alpha)^3H(\alpha)K_-(\alpha)|\d\alpha \\
&\ll P^3L(P)^{1-3s\upsilon_{ij}}Z+P^{29/8-\theta/2+\ep}Z^{1/2}+P^{27/8-\theta/2+\ep}Z^{3/4},
\end{align*}
and hence by (\ref{8.14}) we have
\begin{align*}
I(\fp_1)+I(\fp_2)\ll &\, \Bigl( P^3L(P)Z+P^{29/8-\theta/2+\ep}Z^{1/2}+P^{27/8-\theta/2+\ep}Z^{3/4}\Bigr)^{2/3}\\
&\, \
\times \Bigl( ZP^3L(P)^{-6}+P^{29/8-\theta/2+\ep}Z^{1/2}+P^{27/8-\theta/2+\ep}Z^{3/4}\Bigr)^{1/3}.
\end{align*}
We may therefore conclude from (\ref{8.13}) that
$$ZP^3\ll ZP^3L(P)^{-1}+P^{29/8-\theta/2+\ep}Z^{1/2}+P^{27/8-\theta/2+\ep}Z^{3/4}.$$
Disentangling this inequality, we arrive at the bound
$$Z\ll P^{5/4-\theta+\ep}+P^{3/2-2\theta+\ep}\ll N^{1/2-2\theta/3+\ep}.$$
This completes the proof of the estimate for $Z_{6,3}(N)$ asserted in Theorem \ref{theorem1.3}.\vskip.3cm

\section{Lower bound theorems}\label{sect:9}
The proof of Theorem \ref{theorem1.8} is a very simple argument based on use of the kernel
\begin{equation}\label{9.1}
K(\alpha)=\left(\frac{\sin \pi \alpha \tau}{\pi \alpha}\right)^{\! \! 2},
\end{equation}
which is obtained by setting $\delta=\tau$ in the definition of $\delta^2K_1(\alpha)$ given by (\ref{2.7}). Let $\cY=\cY^\tau_{s,k}(N;\L)$, and write $Y={\rm meas}(\cY)$ and $H(\alpha)=H_\cY(\alpha)$. We have
$$\int_{-\infty}^\infty g_1(\alpha)\cdots g_s(\alpha)H(\alpha)K(\alpha)\d\alpha =\sum_{\x \in \cA(P,R)^s} \int_\cY \hK(\lambda_1x_1^k+\dots +\lambda_sx_s^k-\mu)\d\mu,$$
and it follows from (\ref{2.9}) that $\hK(\beta)=\max\{0,\tau-|\beta|\}$. Hence for each $\x$ there is an interval $[F(\x)-\tau/2,F(\x)+\tau/2]$ contained in $\cY$ on which $\hK(F(\x)-\mu)\ge \tau/2$, and it follows that
$$\int_{-\infty}^\infty g_1(\alpha)\cdots g_s(\alpha)H(\alpha)K(\alpha)\d\alpha \gg \tau^2P^s.$$
Applying H\"older's inequality, we therefore obtain
$$\tau^2P^s\ll \left(\int_{-\infty}^\infty |H(\alpha)|^2K(\alpha)\d\alpha\right)^{\! 1/2}\prod_{j=1}^s\left(\int_{-\infty}^\infty |g_j(\alpha)|^{2s}K(\alpha)\d\alpha\right)^{\! 1/(2s)}.$$
Thus if $\Delta=\Delta_{s,k}$ is an admissible exponent, then we deduce from (\ref{1.9}), (\ref{2.7}), Lemma \ref{lemma2.1}, and Lemma \ref{lemma2.3} that
$$\tau^2P^s\ll \tau Y^{1/2}(P^{2s-k+\Delta+\ep})^{1/2},$$
and hence $Y\gg \tau^2P^{k-\Delta-\ep}$. In particular, we deduce from \cite[Theorem 1.2]{W:3cubes} that the exponent $\Delta_{3,3}=(\sqrt{2833}-43)/41=0.249413\ldots $ is admissible, and this completes the
proof of Theorem \ref{theorem1.8}.

\section{Linear combinations of two primes}\label{sect:10}
In view of the analysis of \cite{P:02lfp} and the pattern of argument established in the previous sections, we can be somewhat brief in our proof of Theorem \ref{primes}. Here we employ the weighted exponential sums
$$h_i(\alpha)=\sum_{p \le X}(\log p)e(\lambda_ip\alpha),$$
and we write $\cZ=\cZ^*(X;\L,\tau)\cap (X/2,X]$ and $Z={\rm{meas}}(\cZ)$. We let $\fM$ denote the set of $\alpha$ satisfying $|\alpha|\le (\log X)^AX^{-1}$, for a suitable constant $A>0$, and we define the boundary between the minor and trivial arcs using a suitable function $T(X)$, with the property that
\begin{equation}\label{10.1}
\sup_{\alpha \in \fm} |h_1(\alpha) h_2(\alpha)|\ll X^2 L(X)^{-1},
\end{equation}
wherein the function $L(X)\ll T(X)$ tends to infinity sufficiently slowly. The existence of such functions $T$ and $L$ follows from \cite{P:02lfp} (see Lemma 2 and the discussion in \S3). We let $K(\alpha)$ be as in
(\ref{9.1}) and write $H(\alpha)=H_\cZ(\alpha)$. If $\mu\in \cZ$, then one has
$$\int_{-\infty}^\infty h_1(\alpha)h_2(\alpha)e(-\alpha \mu)K(\alpha)\d\alpha =0,$$
and the analysis of \cite[\S4]{P:02lfp} yields
$$\int_\fM h_1(\alpha)h_2(\alpha)e(-\alpha \mu)K(\alpha)\d\alpha \gg \tau^2 \mu.$$
Hence on integrating over $\cZ$ and applying Schwarz's inequality, we find that
\begin{equation}\label{10.2}
\tau^2XZ\ll \cI_1^{1/2}\cI_2^{1/2},
\end{equation}
where
$$\cI_1=\int_{-\infty}^\infty |H(\alpha)|^2K(\alpha)\d\alpha \quad \mbox{and}\quad \cI_2=\int_{\fm \cup \ft}|h_1(\alpha)h_2(\alpha)|^2K(\alpha)\d\alpha.$$
By Lemma \ref{lemma2.1} we have $\cI_1\ll \tau^2Z$. Furthermore, by applying \cite[Lemma 3]{P:02lfp} and the trivial bound $|z_1z_1|\le |z_1|^2+|z_2|^2$ together with Lemma \ref{lemma2.3}, we obtain
$$\cI_2 \ll X^2 \sup_{\alpha \in \fm} |h_1(\alpha)h_2(\alpha)|^{1/2} + X^3 T(X)^{-1}.$$
We therefore deduce from (\ref{10.1}) and (\ref{10.2}) that
$$\tau XZ\ll Z^{1/2}X^{3/2}L(X)^{-1/4},$$
 and hence that $Z\ll \tau^{-2}XL(X)^{-1/2}$. Theorem \ref{primes} now follows on summing over dyadic intervals.

\section{Appendix: a variant of Br\"udern's pruning lemma}\label{sect:11}
It is convenient to have available a sharp version of Br\"udern's pruning lemma which avoids the loss of $\ep$-powers of the basic parameter. Although technically a straightforward modification of \cite[Lemma 2]{Bru:88}, we supply details here in order to provide a complete exposition. Our argument is motivated by the proof of \cite[Lemma 3.3]{BKW1}.\par

We begin with some notation. Let $k$ be a natural number with $k\ge 2$, let $N$ be a large real number, and put $P=N^{1/k}$. We suppose that $\cA\subseteq [1,P]\cap \Z$, and we write
$$F(\alpha)=\sum_{x\in \cA}e(\alpha x^k).$$
Finally, we define the multiplicative function $w_k(q)$ by taking
$$w_k(p^{uk+v})=\begin{cases} kp^{-u-1/2},&\text{when $u\ge 0$ and $v=1$,}\\
p^{-u-1},&\text{when $u\ge 0$ and $2\le v\le k$.}\end{cases}$$
Then according to \cite[Lemma 3]{Vau1986c}, whenever $a\in \Z$ and $q\in \NN$ satisfy $(a,q)=1$, the exponential sum
$$S_k(q,a)=\sum_{r=1}^qe(ar^k/q)$$
satisfies the bound $q^{-1}S_k(q,a)\ll w_k(q)$.

\begin{lemma}\label{lemma11.1}
Let $Q$ be a real number with $Q\le P$. When $a\in \Z$ and $q\in \NN$ satisfy $1\le a\le q \le Q$ and $(a,q)=1$, let $\cM(q,a)$ denote an interval contained in $[a/q-\tfrac{1}{2},a/q+\tfrac{1}{2}]$, and assume that the sets $\cM(q,a)$ are disjoint. Write $\cM$ for the union of the sets $\cM(q,a)$. Also, let $\gamma$ be a positive number, and let $G:\cM \rightarrow \C$ be a function satisfying
$$G(\alpha)\ll (q+N|q\alpha -a|)^{-1-\gamma}\quad \text{for}\quad \alpha \in \cM(q,a).$$
Then
$$\int_\cM G(\alpha)|F(\alpha)|^2\d\alpha \ll_\gamma P^2N^{-1}.$$
\end{lemma}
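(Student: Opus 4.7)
The plan is to expand $|F(\alpha)|^2=\sum_{x,y\in\cA}e(\alpha(x^k-y^k))$ and integrate each term against $G$ over the major arcs. On $\cM(q,a)$ I would rewrite the hypothesis as $|G(\alpha)|\ll q^{-1-\gamma}(1+N|\alpha-a/q|)^{-1-\gamma}$, and majorise by extending this weight to the full interval $|\alpha-a/q|\le\tfrac{1}{2}$, which is legitimate since $|F|^2\ge 0$. After opening $|F|^2$, integrating in $u=\alpha-a/q$, and summing over residues $a$ coprime to $q$, I arrive at
\begin{equation*}
\sum_{(a,q)=1}\int_{\cM(q,a)}|G||F|^2\,\d\alpha\ll q^{-1-\gamma}\sum_{x,y\in\cA}|c_q(x^k-y^k)|\,|\widetilde W(x^k-y^k)|,
\end{equation*}
where $c_q$ denotes the Ramanujan sum and $\widetilde W(n)=\int_{-1/2}^{1/2}(1+N|u|)^{-1-\gamma}e(nu)\,\d u$ is independent of $a$.

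The classical estimate $|c_q(n)|\le\gcd(q,n)$ applies, and an integration by parts splitting the integral at the cusp $u=0$ gives $|\widetilde W(n)|\ll_\gamma\min\{N^{-1},|n|^{-1}\}$. Writing $\gcd(q,n)=\sum_{d\mid\gcd(q,n)}\phi(d)$, interchanging summations, and using the tail estimate $\sum_{q\le Q,\,d\mid q}q^{-1-\gamma}\ll_\gamma d^{-1-\gamma}$, the task reduces to controlling
\begin{equation*}
\int_\cM|G||F|^2\,\d\alpha\ll_\gamma N^{-1}\sum_{d\le Q}\phi(d)d^{-1-\gamma}\cdot\#\{(x,y)\in\cA^2:d\mid x^k-y^k\}.
\end{equation*}

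For the arithmetic counting I would use that $z\mapsto z^k\pmod{d}$ has at most $k$ preimages per residue, so each $x\in\cA$ admits at most $k(P/d+1)$ partners $y\in\cA$ with $d\mid x^k-y^k$, giving $\#\{(x,y):d\mid x^k-y^k\}\ll P^2/d+P$. Substituting produces two contributions: $N^{-1}P^2\sum_d\phi(d)d^{-2-\gamma}$, which is $O_\gamma(P^2N^{-1})$ since the Dirichlet series $\sum_d\phi(d)d^{-2-\gamma}=\zeta(1+\gamma)/\zeta(2+\gamma)$ converges absolutely; and $N^{-1}P\sum_{d\le Q}\phi(d)d^{-1-\gamma}\ll_\gamma PQ^{1-\gamma}/N\le P^{2-\gamma}/N\le P^2/N$, completing the argument.

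The main obstacle is the avoidance of $\ep$-losses in the summation over $q$, which a naive appeal to the divisor bound $\tau(q)\ll q^\ep$ would introduce as an unwanted factor of $Q^\ep$. The remedy, borrowed from the treatment of \cite[Lemma 3.3]{BKW1}, is to work directly with the multiplicative function $\phi(d)d^{-1-\gamma}$, whose Dirichlet series converges absolutely for $\gamma>0$, so that the implicit constant depends only on $\gamma$ rather than on $Q$ or $P$.
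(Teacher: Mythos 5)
Your overall skeleton matches the paper's: extend the weight to the whole real line, expand $|F(\alpha)|^2$ and average over $a$ to produce Ramanujan sums, bound $|c_q(h)|\le (q,h)$, and reduce to a weighted count of pairs $(x,y)$ with $d\mid x^k-y^k$; you also correctly identify that $\ep$-losses must be sidestepped through the absolute convergence of a Dirichlet series with $\gamma>0$ in the exponent. The change from the paper's inequality $(q,n)\le\sum_{d\mid q}d\cdot[d\mid n]$ to your identity $(q,n)=\sum_{d\mid(q,n)}\phi(d)$ is harmless, and the tail bound $\sum_{q\le Q,\,d\mid q}q^{-1-\gamma}\ll_\gamma d^{-1-\gamma}$ is correct.

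The gap is in the arithmetic counting. Your claim that $z\mapsto z^k\pmod{d}$ has at most $k$ preimages per residue class is false for prime-power moduli, and this undermines the bound $\#\{(x,y)\in[1,P]^2:d\mid x^k-y^k\}\ll P^2/d+P$. Take $k=3$ and $d=p^3$: every $y$ with $p\mid y$ satisfies $y^3\equiv 0\pmod{p^3}$, so the congruence $y^3\equiv 0\pmod{p^3}$ has $p^2$ solutions modulo $p^3$, not $3$. Consequently, with $\rho(d)$ denoting the number of pairs $(x,y)$ modulo $d$ satisfying $x^k\equiv y^k\pmod{d}$, one has $\rho(p^3)\asymp p^4$ for $k=3$, and in general $\rho(d)$ can be as large as $d^{2-2/k+o(1)}$. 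The correct count is $\ll (P/d+1)^2\rho(d)$, which exceeds your $P^2/d$ by a growing power of $d$. The paper's remedy is the precise multiplicative majorant $\rho(p^h)\le k^2(h+1)p^{2h}w_k(p^h)^2$, obtained by writing $\rho(p^h)=p^{-h}\sum_{b}|S_k(p^h,b)|^2$ and invoking the complete exponential-sum bound $q^{-1}S_k(q,a)\ll w_k(q)$. Feeding that bound into your decomposition does in fact recover the lemma: the Euler factor at $p$ becomes $1+O_{k,\gamma}(p^{-1-\gamma})$, and the product over primes converges. Without the Gauss-sum input, however, the argument breaks, since the counting function you use is simply not of the claimed size.
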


\begin{proof}One has
\begin{equation}\label{11.1}
\int_\cM G(\alpha)|F(\alpha)|^2\d\alpha \ll \sum_{1\le q\le Q}q^{-1-\gamma}\int_{-\infty}^\infty \sum^q_{\substack{a=1\\ (a,q)=1}}\frac{|F(a/q+\beta)|^2}{(1+N|\beta|)^{1+\gamma}}\d\beta .
\end{equation}
Write $c_q(h)$ for Ramanujan's sum, which we define by
$$c_q(h)=\sum^q_{\substack{a=1\\ (a,q)=1}}e(ah/q).$$
Then it follows that
$$\sum^q_{\substack{a=1\\ (a,q)=1}}|F(a/q+\beta)|^2=\sum_{x,y\in \cA}c_q(x^k-y^k)e(\beta (x^k-y^k)).$$
The estimate $|c_q(h)|\le (q,h)$ therefore conveys us from (\ref{11.1}) to the bound
\begin{equation}\label{11.2}
\int_\cM G(\alpha)|F(\alpha)|^2\d\alpha \ll_\gamma N^{-1}\sum_{1\le q\le Q}q^{-1-\gamma}\sum_{1\le x,y\le P}(q,x^k-y^k).
\end{equation}

\par Write $\rho(d)$ for the number of solutions of the congruence $x^k\equiv y^k\mmod{d}$ with $1\le x,y\le d$. Then, by sorting $x$ and $y$ into residue classes modulo $d$, we find that whenever $q\le P$ one has
\begin{align}
\sum_{1\le x,y\le P}(q,x^k-y^k)&\le \sum_{d|q}d\cdot \text{card}\{ 1\le x,y\le P:x^k\equiv y^k\mmod{d}\}\notag \\
&\le \sum_{d|q}(P/d+1)^2d\rho(d)\ll P^2\sum_{d|q}\rho(d)/d.\label{11.3}
\end{align}
But $\rho(d)$ is a multiplicative function of $d$, and when $d$ is a prime power $p^h$ with $h\ge 1$, one has
\begin{align*}
\rho(p^h)&=p^{-h}\sum_{b=1}^{p^h}|S_k(p^h,b)|^2=p^{-h}\sum_{l=0}^h\sum^{p^l}_{\substack{c=1\\ (c,p)=1}}(p^{h-l}|S_k(p^l,c)|)^2\\
&\le p^h\sum_{l=0}^hp^l w_k(p^l)^2\le k^2(h+1)p^{2h}w_k(p^h)^2.
\end{align*}
Since $\rho(d)$ is a multiplicative function of $d$, then $\sum_{d|q}\rho(d)/d$ must be a multiplicative function of $q$. When $q$ is a prime power $p^m$ with $m\ge 1$, therefore, one finds that
$$\sum_{d|q}\rho(d)/d\le \sum_{h=0}^m k^2(h+1)p^hw_k(p^h)^2\le k^4(m+1)^2p^mw_k(p^m)^2.$$
Consequently, we deduce from (\ref{11.3}) that
\begin{align}
q^{-1-\gamma}\sum_{1\le x,y\le P}(q,x^k-y^k)&\ll P^2\prod_{\substack{p^m\|q\\ m\ge 1}}\left( (p^m)^{-1-\gamma}k^4(m+1)^2p^mw_k(p^m)^2\right) \notag \\
&\le P^2\prod_{\substack{p^m\|q\\ m\ge 1}}\left( k^6(m+1)^2p^{-1-m\gamma}\right) .\label{11.4}
\end{align}

\par Next, on substituting (\ref{11.4}) into (\ref{11.2}), we deduce that
\begin{align*}
\int_\cM G(\alpha)|F(\alpha)|^2\d\alpha &\ll_\gamma P^2N^{-1}\sum_{q=1}^\infty \prod_{\substack{p^m\|q\\ m\ge 1}}\left( k^6(m+1)^2p^{-1-m\gamma}\right) \\
&=P^2N^{-1}\prod_p\Bigl( 1+p^{-1}\sum_{m=1}^\infty k^6(m+1)^2p^{-m\gamma}\Bigr)\\
&\le P^2N^{-1}\prod_p(1+Ap^{-1-\gamma}),
\end{align*}
for some positive number $A$ depending at most on $k$. Thus we may conclude that
\begin{align*}
\int_\cM G(\alpha)|F(\alpha)|^2\d\alpha &\ll_\gamma P^2N^{-1}\prod_p(1-p^{-1-\gamma})^{-A}\\
&\le P^2N^{-1}\zeta(1+\gamma)^A\ll_\gamma P^2N^{-1}.
\end{align*}
This completes the proof of the lemma.
\end{proof}


\providecommand{\bysame}{\leavevmode\hbox
to3em{\hrulefill}\thinspace}
\providecommand{\MR}{\relax\ifhmode\unskip\space\fi MR }
\providecommand{\MRhref}[2]{%
  \href{http://www.ams.org/mathscinet-getitem?mr=#1}{#2}
} \providecommand{\href}[2]{#2}

\end{document}